\tikzstyle arrowstyle=[scale=1]
\tikzstyle directed=[postaction={decorate,decoration={markings,
    mark=at position 0.6 with {\arrow[arrowstyle]{stealth}}}}]
\renewcommand{\H}{\mathcal{H}}
\title{Dimensions in non-Archimedean geometries}
\author{Florent Martin}
\address{Florent Martin, Laboratoire Paul Painlev\'{e}, Universit\'{e} Lille 1, 59655 Villeneuve d'Ascq, France.}
\email{florent.martin@math.univ-lille1.fr}
\thanks{ The research leading to these results has received funding from 
the European Research Council under the European Community's Seventh 
Framework Programme (FP7/2007-2013) / ERC Grant Agreement n$^o$ 246903 "NMNAG 
and from the
Labex CEMPI (ANR-11-LABX-0007-01)
}
\begin{document}

\date{\today}

\begin{abstract}
Let $K$ be an algebraically closed non-Archimedean field. 
Leonard Lipshitz has introduced a manageable notion of subanalytic sets $S$ of the polydisc $\Kon$ which 
contains affinoid sets and is stable under projection. 
We associate to a subanalytic set $S \subset \Kon$ its counterpart $S_\Berko$, a subset of the Berkovich polydisc. 
This allows us to give a new insight to the dimension of subanalytic sets using the degrees of the completed residual fields. 
With these methods we obtain new results, such as the invariance of the dimension under subanalytic bijection in any characteristic.
Then we study more generally subsets $S \subset K^m\times \Gamma^n$ and 
$S \subset K^m\times \Gamma^n \times k^p$ where $\Gamma$ is the value group and $k$ the residue field. 
We allow $S$ to be either definable in ACVF, or definable in the analytic language of L. Lipshitz.
We define a dimension for such sets $S$. In the case when 
$S \subset K^n$ (resp. $S\subset \Gamma^n$, $S\subset k^n$), it coincides with the above dimension (resp. the o-minimal dimension, the Zariski dimension). 
We prove that this dimension is invariant under definable bijection and decreases under projection.
This allows us to generalize previous results on tropicalization of Berkovich spaces and to place them in a general framework.
\end{abstract}
\maketitle

\tableofcontents

\maketitle


\section{Introduction}
A non-Archimedean field is a field $K$ equipped with a non-Archimedean norm 
$| \cdot | : K \to \R_+$ for which it is complete. \par 
We will also consider the value group $\Gamma \subset \R_+^*$  and the residue field $k$ of our valued field $K$, 
and will use some quantifier elimination results in this context. 
We will set $\Gamma_0 := \Gamma \cup \{0\}$ and we extend the operations $\cdot$ and $<$ from $\Gamma$ 
to $\Gamma_0$ in the following way: $0 <x$ for all $x\in \Gamma$, and $0\cdot x =0$ for all $x\in \Gamma_0$. 

\subsection{Motivations from Tropical Geometry}
Let $K$ be a non-Archimedean algebraically closed field, $\Gamma$ its value group (we set 
$|\cdot |: K \to \Gamma_0$ its norm). 
The process of tropicalization starts with an object $X$ defined over $K$, (for instance 
an algebraic subvariety of $\Gm^n$), 
and associates to it a simpler combinatorial object defined over 
$\Gamma$. 
This process relies on the \emph{tropicalization map}:
\[ 
\begin{array}{lccc}
\Trop :&  (K^*)^n &\to &\Gamma^n \\ 
 & (x_1,\ldots,x_n) & \mapsto &  (|x_1|,\ldots,|x_n|).
  \end{array}
  \]
  
For instance  when $X$ is a subvariety of 
$\Gm^n$, we can study $\Trop(X)$ which is a subset of $\Gamma^n$. Some properties of $X$ reflect in $\Trop (X)$.
In the context of Berkovich spaces (also called $K$-analytic spaces) we will also denote by $\Trop$ the map
 
\[ 
\begin{array}{lccc}
\Trop : (\mathbb{A}_{m,K}^n)^{\an}  &\to &(\R_+)^n \\ 
 & x & \mapsto &  (|T_1(x)|,\ldots,|T_n(x)|)
  \end{array}
  \]
where the $T_i$'s are the coordinate functions.
\par   
We will say that a set 
 $S \subset (\R^*_+)^n$  is a \emph{$\Gamma$-rational polyhedron} if it is a finite intersection of sets of the form 
 \[ \{ (\gamma_1,\ldots,\gamma_n) \in (\R^*_+)^n \st 
 \prod_{i=1}^n \gamma_i^{a_i} \leq \lambda \} 
\]
where $a_i \in \Z$ and $\lambda \in \Gamma$.
A \emph{polytope} is a compact polyhedron. 
A \emph{$\Gamma$-rational polyhedral set} is a finite union of $\Gamma$-rational polyhedra. 
\par 
We state some known results: \index{polyhedral set} 

\begin{theor} \cite[Theorem A]{BieGro},\cite[2.2.3]{EKL}
Let $X$ be an irreducible algebraic subvariety  of 
$\Gm^n$ of dimension $d$. Then $\overline{\Trop(X(K))} \subset (\R_+^*)^n$ is a connected $\Gamma$-rational polyhedral set of pure dimension $d$. 
\end{theor}

\begin{theor}
\cite[Th. 1.1]{Gub07}
Let $X$ be an irreducible closed analytic subvariety of 
$(\Gm^n)^\an$ of dimension $d$. 
Then $\Trop(X) \subset (\R_+^*)^n$ is a locally finite union of $\Gamma$-rational
$d$-dimensional polytopes.
\end{theor}

\begin{theor}
\cite[3.30]{DucIma} and \cite[Th 3.2]{Ducpo}.
Let $X$ be a compact $K$-analytic space of dimension $d$ and 
$f:X \to (\Gm^n)^\an$ an analytic map. 
Then $|f|(X) \subset (\R_+^*)^n$  
is a finite union of $\Gamma$-rational polytopes of dimension $\leq d$.
\end{theor}

The initial motivation of this work was to obtain the following generalization of Antoine Ducros's result \cite[Theorem 3.2]{Ducpo}:

\begin{theor}\ref{theoimprovduc}
Let $X$ be a compact $K$-analytic space of dimension $d$, 
$f : X \to \mathbb{A}^{n,an}_K$ be an analytic map. Then 
$|f|(X) \cap (\R_+^*)^n$ is a $\Gamma$-rational polyhedral set of $(\R_+^*)^n$ of dimension 
$\leq d$.
\end{theor}
It happens that this result is more or less equivalent to the more \emph{rigid-geometry} style result:

\begin{theor}\ref{theoimprovduc} 
Let $X$ be a compact $K$-analytic space of dimension $d$, 
$f : X \to \mathbb{A}^{n,an}_K$ be an analytic map. Then 
$|f|(X(K)) \cap (\Gamma)^n$ is a rational polyhedral set of $(\Gamma)^n$ of dimension 
$\leq d$.
\end{theor}
Where we say that $S\subset \Gamma^n$ is a polyhedron if it is 
 a finite intersection of sets of the form 
 \[ \{ (\gamma_1,\ldots,\gamma_n) \in \Gamma^n \st 
 \prod_{i=1}^n \gamma_i^{a_i} \leq \lambda \} 
\]
where $a_i \in \Z$ and $\lambda \in \Gamma$.
The reader interested by this result can easily proceed to its proof. 
Let us mention that the result \cite[Th 3.2]{Ducpo} has been motivated by the recent development
by Antoine(s) Chambert-Loir and Ducros of a theory of real differential forms on Berkovich spaces \cite{ChaDuc}.

\subsection{Definable sets}
To prove the theorem \ref{theoimprovduc}, we will use the language $\Ldan$ introduced by 
Leonard Lipshitz in \cite{Li_rig}. 
In fact, in  theorem \ref{theoimprovduc}, it is not difficult to
prove that $|f|(X)$ is a polyhedral set using 
the quantifier elimination theorem proved by Leonard Lipshitz \cite[theorem 3.8.2]{Li_rig}. 
But the bound on the dimension in theorem \ref{theoimprovduc} does not follow obviously.
In the rest of the introduction, we will consider only subsets $S\subset \KmGn$ (in particular, 
$\Gamma \subset \R^*_+$ can be a very small subgroup). 
\par 
The list of results mentioned above \cite{BieGro,Gub07,Ducpo} 
which all contain a bound on the dimension of $\Trop(X)$ was a strong evidence in favor of the bound in theorem \ref{theoimprovduc}. 
Once 
one tries to prove it, one realizes that we have a set 
$X \subset K^m$, a map 
$f : X \to \Gamma^n$ and we would like to say that 
\begin{equation}
\label{compdim}
\dim(f(X) ) \leq \dim(X).
\end{equation}
This inequality should be satisfied in any good theory of
dimension. However, here the situation is quite particular, because in 
\eqref{compdim}, we are dealing with two different kinds of dimension.
On the left side, this 
is the very combinatorial dimension of a polyhedral set of  
$\Gamma^n$,
 and on the right side, this a dimension for subsets of $K^n$.
What formula 
\eqref{compdim} suggests is that these two dimensions should interact nicely. 
It is then natural to wonder whether it is possible to obtain a \emph{unified} theory of dimension, where 
\eqref{compdim} would be a simple corollary.\par
More precisely, assume that we could assign a dimension 
$\dim(S) \in \N$ to each nonempty subset $S$ of $K^m \times \Gamma^n$ such that

\begin{equation}
\label{Dim}
\small{
\begin{array}{|c|c|}
\hline
\multicolumn{2}{|c|}{ \text{ \textsc{Dimension axioms}} }  \\
\hline
\text{Dim} \ 0& \dim(S) =0  \text{if and only if}  \ S \ \text{is finite}. \\
\hline
\text{Dim} \ 1& \text{When} \ S \subset K^m, \ 
  \dim(S) \ \text{is some reasonable dimension of} \ S \ \text{as a subset of} \ K^m. \\
\hline
\text{Dim} \ 2 & \text{When} \ S \subset \Gamma^n, \  \dim(S) \ \text{is a 
reasonable dimension of} \ S \ \text{as a subset of} \ \Gamma^n.  \\ 
\hline
 \text{Dim} \ 3 & 
 \text{If} \ S \subset \KmGn ,
T \subset K^{m'} \times \Gamma^{n'}
\ \and \ f : S \to T \\
& \text{is a  map, then} \  \dim(f(S)) \leq \dim(S). \\
\hline
 \text{Dim} \ 3' & 
 \text{If} \ S \subset \KmGn ,
T \subset K^{m'} \times \Gamma^{n'}
\ \and \ f : S \to T \\
& \text{is a  bijection, then} \  \dim(f(S)) = \dim(S). \\
\hline 
\text{Dim} \ 4 & \text{If} \ S,T \subset \KmGn, \ \dim(S\cup T) = \max(\dim(S),\dim(T)) \\
\hline
\text{Dim} \ 5 & \text{If} \ S \subset \KmGn ,
T \subset K^{m'} \times \Gamma^{n'}, \ \text{then} \ \dim(S\times T) = \dim(S)+\dim(T) \\
\hline
\end{array}
}
\end{equation}

Then \eqref{compdim} would be a simple consequence of this dimension theory.
Of course, in such a generality this is an unattainable goal. 
In order to build such a theory, 
one has to work with a \emph{restricted class} of subsets of 
$K^m \times \Gamma^n$.  and a restricted class of maps.\par 
To do this, there are two 
\emph{restricted classes} that we will consider in this text:
\begin{itemize}
\item The class of definable subsets $S \subset \KmGn$, 
in the first order theory of algebraically valued fields\footnote{In that case $K$ is an arbitrary algebraically closed valued field, 
so $\Gamma$ can be an arbitrary totally ordered abelian  divisible group not necessarily of height $1$.}. 
\item 
If $K$ is an algebraically closed non-Archimedean field, the class of subanalytic sets $S \subset \Kom \times \Gamma^n$ as defined by L. Lipshitz in \cite{Li_rig}.  
\end{itemize}
The main results of this text is that for these two classes we can build a good dimension theory, as described in \eqref{Dim}: 
\begin{theor} \ref{theodimmN}
There exists a dimension theory which satisfies the  axioms listed in \eqref{Dim} in 
the following two cases.

\begin{enumerate}
\item When $K$ is an algebraically closed valued field and we restrict  to 
definable subsets $S \subset \KmGn$ in the theory ACVF and definable maps 
$f: S \to T$ (which means that the graph of $f$ is definable). 
\item When $K$ is an algebraically closed non-Archimedean field and we restrict 
to subanalytic sets $S\subset \KmGn$ as defined in \cite{Li_rig}, and subanalytic maps between them.
\end{enumerate}

\end{theor}

It appears however that assigning a dimension $\dim(S) \in \N$ to a set 
$S\subset \KmGn$ is quite coarse.
\begin{exem}
For instance we must have $\dim(K^*) = \dim(\Gamma)=1$. 
Moreover, the
map $|\cdot | : K^* \to \Gamma$ is surjective, which is 
compatible with the fact that $\dim(K^*) \geq \dim(\Gamma)$. 
At the opposite, if 
$f : \Gamma \to K$ is a definable map (either in the theory of algebraically closed valued fields, 
or in the theory of subanalytic sets), then it must have a finite image (see lemma \ref{finite_image}), so 
$\dim(f(\Gamma))=0$. 
\end{exem}
This example illustrates a general philosophy that we can sum up in:
\begin{center}
\label{philo}
\framebox{One dimension of $K$ is stronger than one dimension of $\Gamma$.}
\end{center}

\begin{exem}
\label{ex2}
Let $S = K \coprod \Gamma^2 $. 
If one wants to give a precise sense to this, one can take 
\[S = \big( \{x\}\times \Gamma^2 \big) \coprod \big( K \times \{ (\gamma_1,\gamma_2) \} \big) \subset K \times \Gamma^2 \]
with $x\in K$ and $(\gamma_1,\gamma_2) \in \Gamma^2$. 
According to the dimension theory built in 
theorem \ref{theodimmN}, $\dim(S)=2$. However, one 
can check that there is no definable map
$f : S \to K^n$ such that 
$\dim(f(S)) =2$. Hence in that case, saying that $S$ has 
dimension $2$ seems to hide a part of the situation.\\
More generally, 
we should have $\dim(\KmGn) = m+n$. 
Now, there exist some definable maps 
$f : \KmGn \to \Gamma^{m+n}$ whose image has dimension $m+n$, but there does not exist a 
definable map $g: \KmGn \to K^{m+n}$ whose image has 
dimension $m+n$. 
Actually, for such a map $g$, it is natural to conjecture that 
$\dim(\im(g)) \leq m$. This is true, and will follow for 
instance from theorem \ref{theodimmixed}. 
\end{exem}
Hence we should refine the leitmotiv given in \eqref{philo}, and 
replace it by \vspace{3pt}
\begin{center}
 \framebox{One dimension of $K$ can be converted in one dimension of $K$ or one dimension of $\Gamma$.} \\
\framebox{One dimension of 
$\Gamma$ can not be converted in one dimension of $\Gamma$.}
\end{center}
In order to take this into account, one  should
look for a dimension theory that would be two-dimensional: 
to a subset $S \subset \KmGn$ we would assign a dimension 
$\dim(S)=(d_1,d_2) \in \N^2$ with $d_1$ 
for the part of the valued field $K$ and $d_2$ for the valued group $\Gamma$. 
However this still does not work. 
Indeed, what should be the dimension 
of $S = K \coprod \Gamma^2 $ that we have seen in example \ref{ex2}? 
We should hesitate between 
$(1,0)$ and $(0,2)$. 
According to the above leitmotiv, it would be unfair to 
throw $(1,0)$, but on the other hand since $0+2 > 1+0$, we do not want neither to throw 
$(0,2)$. The only solution seems to keep both of them, and to define the dimension 
of a definable subset of $S\subset \KmGn$ as a finite subset of $\N^2$.\par 
In order to obtain a simple statement, we must specify a few things. 
On $\N^2$ we will consider the partial order $\leq$ defined by
\[(a,b) \leq (a',b') \Leftrightarrow a\leq b \ \and \ a'\leq b' . \]
We will say that a set $A\subset \N^2$ 
is a \textbf{lower set} if it is stable under $\leq$.
If 
$(a,b) \in \N^2$, we set 
\[\langle (a,b) \rangle := \{ (x,y)\in \N^2 \st (x,y) \leq (a,b) \}. 
\]
This is the smallest lower set of $\N^2$ containing $(a,b)$. 
For aesthetic reasons, if $A,B \subset \N^2$ we will set
$\max(A,B):=A \cup B$. \par 
Let then $K$ be a non-Archimedean algebraically closed field or an algebraically closed valued field, 
and let \emph{definable} meaning subanalytic or 
definable in ACVF depending on this.

\begin{theor} \ref{theodimmixed}
There exists  a dimension  which assigns to each definable
set $S \subset \KmGn$  a dimension 
$\dim (S) \subset \N^2$ which is a finite lower set of $\N^2$, satisfying the following properties: 
\begin{enumerate}[(i)]
\item 
If $S \subset \Gamma^n$, then 
$\dim(S) = \langle (0,d) \rangle$ where $d$ is the classical dimension of 
$S$ as a subset of $\Gamma^n$ as defined in section \ref{sec1.4}. 
\item If $S \subset K^m$ is definable, then 
$\dim(S) = \langle (d,0) \rangle$ where $d$ is the classical dimension of 
$S$ as 
exposed in sections \ref{sec1.2} and \ref{subsection:dimACVF}.
\item 
If $f : S \to T$ is a definable map, then
\[ \dim (f(S) ) \leq \max_{k\geq 0} \big( \dim(S) + (-k,k) \big). \]
\item 
If $f: S \to T$ is a definable  bijection, then $\dim(S)=\dim(T)$. 
\item $\dim(S \times T) = \dim(S)+ \dim(T)$.
\end{enumerate}
\end{theor}
We have used 
the convention that  if $A\subset \N^2$ and $B \subset \Z^2$ then 
$A+B :=\{a+b \st a\in A, \ b\in B \}\cap \N^2$.  \par 
We want to point out that this theorem implies theorem \ref{theodimmN}. 
Indeed, for a definable set 
$S \subset \KmGn$, if we set 
\[\dim_\N (S) := \max_{(d_1,d_2) \in \dim(S) } d_1 +d_2 \]
the properties listed in 
theorem \ref{theodimmixed} above imply that $\dim_\N$ fulfills the properties of theorem \ref{theodimmN}.
This is actually  the way we do things: 
we first prove theorem \ref{theodimmixed}, and obtain theorem \ref{theodimmN} 
as a corollary.

\subsection{Berkovich points as a tool for dimension}

In the analytic setting, i.e. when $K$ is a non-Archimedean field, the classical dimension of 
a subanalytic set $S\subset \Kon$ refereed to above has been introduced by 
Leonard Lipshitz and Zachary Robinson in \cite{LRdim}: the dimension $\dim(S)$ of a subanalytic set is the 
greatest integer $d$ for which there exists a coordinate projection 
$\pi : K^n \to K^d$ such that 
$\pi(S)$ has nonempty interior. 
Many properties are proved in \cite{LRdim}, let us mention for instance a smooth stratification theorem. 
However, to give an example, a formula such as 
$\dim(S_1\cup S_2) = \max (\dim(S_1), \dim(S_2) )$ does not follow easily.
In addition, 
we did not manage to deduce from the results of \cite{LRdim} the invariance of the dimension 
under subanalytic bijection, and the fact that dimension decreases under subanalytic map. \par 
Berkovich spaces offer a nice view point on the question.  
If $S\subset \Kon$ is subanalytic, we attach to it (see definition \ref{defi:sub_berko}) in a very natural way a subset of the Berkovich polydisc 
$S_\Berko \subset \Bn_K$ which satisfies $S\subset S_\Berko$ w.r.t. the inclusion $\Kon \subset \Bn_K$. 
Then we prove our key result (theorem \ref{theodimsub}):
\[ \dim(S) = \max_{x\in S_\Berko} ( d(\H(x)/K).\]
Berkovich points are unavoidable for such a formula because when $x\in S \subset \Kon \subset \Bn_K$, then 
$\H(x)=K$ so that 
$d(\H(x) /K)=0$. \par
This formula allows us to give a new treatment of the dimension of subanalytic sets. 
For instance the formula $\dim(S \cup T) = \max(\dim(S) \cup \dim(T))$ becomes transparent. 
In addition, we manage to prove 
the invariance of dimension under subanalytic bijection, and that the dimension 
decreases under subanalytic map \ref{propinvar}, and a formula 
which relates dimension with dimension of fibers \ref{propdimfamily}.

\subsection{Dimensions with many sorts}
We prove theorem \ref{theodimmixed} with two methods. 
First in section \ref{sec2}, we define a notion of relative cell.
A relative cell $C \subset \KmGn$  is a 
family of cells (in the sense of o-minimality) of $\Gamma^n$ which 
are parametrized by a definable set of $K^m$. 
With the help of an interesting in itself cell decomposition theorem 
 \ref{theomixedcelldec} we can define the 
mixed dimension of subsets $S\subset \KmGn$ as a finite subset of
$\llbracket 0,m\rrbracket \times \llbracket 0,n \rrbracket$, and prove that it satisfies some nice properties stated in \ref{theodimmixed}. \par 
We give a second treatment of this mixed dimension in section  \ref{section3sortes}, 
which is an extension of the methods 
of section \ref{sec1.2}.
By this we mean that we manage to define the mixed dimension of 
some definable $S \subset \KmGn$ using some analogues of transcendental
degrees of some generic points of $S$.
This technique  even allows to define a dimension for definable subsets 
$S \subset K^m \times \Gamma^n \times k^p$  (where $k$ is the residue field). 
Precisely $\dim(S)$ is a finite subset of $\llbracket 0,m\rrbracket \times \llbracket 0,n \rrbracket \times \llbracket 0, p \rrbracket$.
The main idea (see definition \ref{defi:d3sortes}) is to associate some $3$-uple in $\N^3$ to some points of $S(L)$ where 
$K\to L$ is some algebraically closed extension of $K$. \par 
We want to mention some link with the notion of $VF$-dimension (denoted by $\dim_{VF}$) introduced in the work of 
Ehud Hrushovski and David Kazhdan \cite[section 3.8]{HrKa}. 
If $S \subset K^m \times \Gamma^n \times k^p$ is a definable set, 
then $S$ can also be considered as a definable subset of $K^m \times \Gamma^n \times RV^p$. 
The following compatibility holds: 
\[ \dim_{VF}(S) = \max( a \st (a,b,c) \in \dim(S) )\]
where on the left hand side, $\dim_{VF}$ is the notion introduced in 
\cite{HrKa} and on the right hand side, this is the dimension that we introduce in sections 
\ref{sec2} and \ref{section3sortes}.\par 

\subsection*{Organization of the paper}
The section \ref{preliminaries} is for preliminaries: subanalytic sets in \ref{prelisousanalytiques}, the notion of o-minimal cells in \ref{sec:Cells}, 
dimension for the $\Gamma$-sort in \ref{sec1.4}, and the dimension of semi-algebraic sets in \ref{subsection:dimACVF}. \par 

In section \ref{sec1.2}, we expose our treatment of the dimension of subanalytic sets with tools from Berkovich spaces. 
In \ref{sec:sub_ber}, we explain what we mean by a subanalytic set of the Berkovich closed unit polydisc.
In \ref{sec:gen_prop} we relate the definition of dimension of subanalytic sets as defined by L. Lipshitz and Z. Robinson 
to a new definition involving some degrees of completed residue field of Berkovich spaces. This allows to prove stability of the dimension under 
subanalytic isomorphism and a formula involving dimension of fibers. In \ref{sec:closure}, we give new proofs of results of \cite{LRdim}, 
concerning the dimension of the closure. Finally, we make a remark concerning C-minimal structures in \ref{sec:Cminimal}. \par 

In section \ref{sec2}, definable means definable in ACVF or in the analytic language of L. Lisphitz. 
We define the mixed dimension of subanalytic sets of $\KmGn$ and 
prove its basic properties: invariance under definable isomorphism 
and behavior under projection. The method used is a \emph{relative cell decomposition}. 
In \ref{sec2.1}, we explain and prove our relative cell decomposition theorem.
We use this in \ref{sec2.2} to define mixed dimension of a definable set of $\KmGn$, which is a finite subset of $\N^2$,  and prove 
its invariance under definable bijection.
In \ref{sec2.3} we prove a formula for the dimension of some projection. 
In \ref{sec:dimN}, we explain how to define the $\N$-valued dimension from the $\N^2$-valued one. \par 

In section \ref{section3sortes} definable still means definable in ACVF or the analytic language.  
We obtain a new approach to mixed dimension using degrees of generic points. 
In \ref{sec:2sortsgen}, we develop it for definable sets of $\KmGn$ and explain that we obtain the same dimension as in section \ref{sec2}.
In \ref{defi:d3sortes} we explain how to extend this dimension to a dimension for definable sets of $\KmGn \times k^p$. 
The reader interested by a quick treatment  of the mixed dimension should prefer this section.\par 

Finally in section \ref{sec3}, we explain how the results of this 
paper can be connected to some tropicalization of Berkovich spaces and allow a conceptual proof of 
theorem \ref{theoimprovduc}.

\textbf{Acknowledgements.} I would like to express my deep gratitude to Jean-Fran\c{c}ois Dat and Antoine Ducros for their constant support during this work.


\section{Preliminaries}
\label{preliminaries}
For Berkovich spaces, some introduction sources are \cite{DucBou, Berko90, NicFor}. 
We will set $\Bn_K := \mathcal{M}(\Tn)$ for the Berkovich closed unit ball of dimension $n$.
\subsection{Subanalytic sets}
\label{prelisousanalytiques}
In this part we consider an algebraically closed  non-Archimedean field $K$.
\subsubsection{Basic properties}
Subanalytic sets have been introduced by Leonard Lipshitz in 
\cite{Lip88} and \cite{Li_rig}. 
This study has then been continued by L. Lipshitz and Z. Robinson in \cite{LR_ring} and \cite{LRline,LRonedim,LRcurve,LRdim}.
We want here to present this theory as a successful way to give 
a description of the image of an analytic morphism between affinoid spaces. \par 
Semianalytic sets of an affinoid space $X$,are the Boolean combinations 
of the sets defined by inequalities 
$\{ |f| \leq |g| \}$ where $f$ and $g$ are analytic functions on $X$.
Unfortunately, in general, the image of an affinoid map is not semianalytic. This forces to have a new idea. \par 
In \cite{Li_rig} L. Lipshitz introduces a set of functions which is bigger than $\Tn$.
The precise definition has two ingredients.
The first one is to make divisions (this idea already appeared for subsets of  $\Zp^n$ in \cite{DenVDD}).
The second one is to consider analytic functions on the open ball (and not only on the closed 
ball as is classically made in rigid or Berkovich geometry). 
However, since the algebra of analytic functions on the open ball of $K$ is not nice enough (not Noetherian for 
instance), one has to use some alternative ring.\par 
More precisely, if $m,n\in \N$, one introduces a ring $\Smn$ of analytic functions on
$\Kom \times \Koon$.  
These rings, studied in detail in \cite{LR_ring} satisfy most of the good properties of 
commutative algebra satisfied by 
the Tate algebra, except Noether Normalization.
In particular, they are Noetherian (see 
\cite[p.9]{LR_ring} for a list of these good properties).\par 
In fact $\Smn$ 
depends on some choice (cf \cite[2.1.1]{LR_ring}) 
of a discrete valuation subring $E \subset K^\circ$, and should be rather denoted by $\Smn(E,K)$.
Precisely, if $\{a_i\}_{i\in \N}$ is a sequence of $K^\circ$ which tends to $0$, we 
associate to it the subring of $K^\circ$: 
\[ \Big( E[a_0,a_1,\ldots]_{ \{a \in E[a_0,a_1,\ldots] \st |a|=1 \} } \Big) \ \widehat{} \ . \]
Let $\mathfrak{B}$ denotes the set of subrings of $K^\circ$ of this form. 
\begin{defi}[{ \cite[2.1.1]{LR_ring} }] 
\[ 
\Smn (E,K) := K \otimes_{K^\circ} 
\Big( \varinjlim_{B \in \mathfrak{B}} \big( B\langle X_1,\ldots,X_m\rangle [[\rho_1,\ldots,\rho_n]] \big) \Big) .\]
\end{defi}
In many cases, a more concrete description can be given.
For instance when $K=\Cp$ and $E = \Qpunr$,
\[\Smn =\Cp \ctp_{\widehat{\Qpunr}^\circ} \Big( \big( \widehat{\Qpunr}^\circ \big) \langle X_1 \ldots X_m \rangle [[ \rho_1 \ldots \rho_n ]] \Big)  .\]
If $K= \Qp$ and $E = \Zp$, then 
\[\Smn = \Qp \otimes_{\Zp} \Zp \langle X_1,\ldots,X_m\rangle [[ \rho_1,\ldots,\rho_n]].\]

\subsubsection{The class of $D$-functions}
The $K$-algebra of $D$-functions  is defined as the smallest subalgebra of 
functions $f : \Kon \to K$ such that 

\begin{enumerate}
\item 
If  $f \in \Tn$, then 
$f: \Kon \to K$ is a $D$-function.
\item
If $f,g : \Kon \to K$ are $D$-functions, then
$D_0(f,g)$ and $D_1(f,g)$ are $D$-functions 
where  
\[\begin{array}{cccc}
D_0(f,g) : & \Kon & \to & K  \\
           & x  & \mapsto & 
\begin{cases}           
\frac{f(x)}{g(x)} & \text{if} \ |f(x)| \leq |g(x)| \neq 0 \\
        0 & \text{otherwise}   
        \end{cases}           
\end{array}
\]

\[\begin{array}{cccc}
D_1(f,g) : & \Kon & \to & K \\
           & x  & \mapsto & 
\begin{cases}           
 \frac{f(x)}{g(x)} & \text{if} \ |f(x)| < |g(x)|  \\
        0 & \text{otherwise}   
        \end{cases}           
\end{array}
\]

\item Let 
$f_1,\ldots, f_m,g_1,\ldots g_n$ be some $D$-functions $\Kov{p} \to K$. 
Let us assume that for all $x\in \Kov{p}$, and that for all $i,j$ 
$|f_i(x)| \leq 1$ and $|g_j(x)| <1$. 
If $F \in \Smn$ then 
\[ 
\begin{array}{cccc}
F(f_1,\ldots,f_m,g_1,\ldots,g_n) : & \Kov{p} & \to & K \\
                  & x & \mapsto & F(f_1(x), \ldots, f_m(x), g_1(x) , \ldots, g_n(x) ) 
                  \end{array}\]
is a
$D$-function.
\end{enumerate}

\begin{defi}
A set $S \subset \Kon$ is subanalytic if it is a finite boolean combination of sets
$\{x\in \Kon \st |f(x)| \leq |g(x)| \}$ where $f,g$ are $D$-functions.
\end{defi}
Note that if $f_1,\ldots, f_N \in \Tn$, then the associated $K$-affinoid space 
$X= \{x\in \Kon \st |f_i(x)|=0\}$ is a subanalytic set because the 
$f_i$'s are $D$-functions, and 
$f(x)=0$ is equivalent to 
$|f(x)| \leq 0$. 
From a geometric view point the interest of subanalytic sets is: 
\begin{theor}
 \cite[th. 3.8.2]{Li_rig} 
 Let $X \subset \Kon$, $Y\subset \Kom$ be some affinoid spaces, and 
 let $f : X\to Y$ be some affinoid map. 
 Then $f(X) \subset Y \subset \Kom$ is a subanalytic set of $\Kom$.
\end{theor}
In fact, the statement of 
\cite[th. 3.8.2]{Li_rig} is more general, and is expressed in a model theoretical way that we present now.
\begin{defi}
A formula in the language $\Ldan$, or $\Ldan$-formula 
is a first order formula  $\varphi(x)$ in the variables $x=(x_1,\ldots,x_n)$, written with symbols 
$f$ for $f$ a $D$-function, inequalities $|f|\leq |g|$, $|f|< |g|$ for $f,g$ some $D$-functions, and logical symbols 
$ \forall, \exists, \neg$. 
\end{defi}

\begin{rem}
 It follows from the above definitions that a subset $S \subset \Kon$ is subanalytic if and only if there exists 
 some quantifier free $\Ldan$-formula $\varphi(x)$ such that 
 $S = \{x\in \Kon \st \varphi(x) \ \text{is true} \}$.
\end{rem}

Let $K \to L$ be a non-Archimedean algebraically closed field extension, $\phi$ some $\Ldan$-formula in the variables $(x_1,\ldots,x_n)$.
Let $x\in (L^\circ)^n$, then one can 
evaluate the truthfulness of $\phi(x)$, because a $D$-function $f$ in n variables defined over $K$ induces also a function 
$(L^\circ)^n \to L$.
\begin{theo}
\label{theoquantel}[{\cite[3.8.2]{Li_rig} } and  
{\cite[4.2]{LRMod}} for uniformity]
Let $\varphi(x)$ be a $\Ldan$-formula in the variables $(x_1,\ldots,x_n)$. 
There exists a quantifier free formula $\psi(x)$ such that for all non-Archimedean extension of algebraically closed valued fields 
$K \to L$, for all $x\in (L^\circ)^n$, 
$\phi(x) \Leftrightarrow \psi(x)$. In other words 
\[ \{x\in (L^\circ)^n \st \phi(x) \ \text{is true} \} = \{x\in (L^\circ)^n \st \psi(x) \ \text{is true} \}.\]
\end{theo}
This result is another way to say that the projection of a subanalytic set is also subanalytic. 
This result implies the stability of subanalytic sets under affinoid maps mentioned above.
\begin{rem}
\label{remuniform}
If $S\subset \Kom$ is subanalytic, and defined by the formula $\varphi$, and if  
$K \to L$ is a non-Archimedean extension of algebraically closed fields, we will set   
\begin{equation}
 S(L):= \{x\in (L^\circ)^n \st \phi(x) \ \text{is true} \} .
\end{equation}
It is crucial to understand that this does not depend on $\varphi$ but only on $S$ according to the above theorem.
\end{rem}
We mention the following result:
\begin{lemme}
\label{propcont}
Let $f : \Kon \to K$ be a $D$-function. 
There exists a decomposition 
$\Kon = X_1\cup \ldots \cup X_N$ in subanalytic sets such that for all $i$, 
$f_{|X_i} : X_i \to K$ is continuous.
\end{lemme}
Tis can be proved easily by induction on the definition of the $D$-function $f$.
\subsubsection{Quantifier elimination with three sorts: Algebraic setting}
\label{subsubsec:ACVF}
Let $\mathcal{L}_1$ be the language with two sorts $K$, $\Gamma$, the language of 
rings for $K$, i.e. symbols $+,\cdot,-,0,1$ interpreted in $K$, a symbol 
$|\cdot | : K \to \Gamma_0$, and symbols 
$\leq, <, \cdot,0$ interpreted on $\Gamma_0$.  
Then the theory ACVF of algebraically closed valued fields with two sorts $K$ and $\Gamma$  
interpreted in $\mathcal{L}_1$ has quantifier elimination.
\par 
We now consider the language $\mathcal{L}_2$ with three sorts $K, \Gamma$ and $k$. 
$\mathcal{L}_2$  contains the symbols of $\L_1$, and 
we add to it the ring language 
$+,-,\cdot,0,1$ for $k$, a function 
$\tilde{\cdot} : K \to k$ such that the restriction of 
$\tilde{\cdot}$ to $K^\circ$ is the classical reduction  (by this we mean that it is
a surjective ring morphism whose kernel is $K^{\circ \circ}$) and 
$\tilde{\cdot}$ is extended outside $\Ko$ by 
 $\tilde{x} =0$ if $|x|>1$. 
Finally, there is a function 
$Res : K^2 \to k$ such that 
\[
\begin{array}{cccc}
\text{Res}: & K^2 & \to & k \\
            & (x,y) & \mapsto & 
\begin{cases}
 \widetilde{\left(\frac{x}{y}\right)} \ \text{if} \ |x| \leq |y| \neq 0 \\
0  \ \text{otherwise}
\end{cases} 
\end{array}\]
Then the theory ACVF with the three sorts $K,\Gamma,k$ has quantifier elimination in the language $\mathcal{L}_2$.
See \cite[theorem 2.1.1]{HHM}) for some explanations.
\par 
We want to insist on the fact that if one removes $Res$ from the language $\mathcal{L}_2$, 
the quantifier elimination does not hold anymore.
Indeed, the set 
\[ S = \{ (x,y,\lambda) \in K^2 \times k \st \text{Res}(x,y)= \lambda \} \] 
which is nothing else than the graph of $Res$,
is definable by a first order formula using $\tilde{\cdot}$ but with a quantifier (to express $\frac{x}{y}$). 
But $S$ can not be definable without quantifier if we do not allow the use of $Res$.

\subsubsection{Quantifier elimination with three sorts: Analytic setting}
\label{subsubsec:an}
All these quantifier elimination results have their counterpart in the analytic language. 
Let $K$ be some algebraically closed non-Archimedean field.  \par 
Let $\mathcal{L}_1^D$ be the language with two sorts $K$, $\Gamma$, the language of 
rings for $K$, i.e. symbols $+,\cdot,-,0,1$ interpreted on $K$, a symbol $f$ for 
each $D$-function $f$, a symbol 
$|\cdot | : K \to \Gamma_0$, and symbols 
$\leq, <, \cdot$ interpreted on $\Gamma_0$.  
In this setting, quantifier elimination takes the following form: 
\begin{theor} \cite[th. 3.8.2]{Li_rig}
For any formula $\varphi(x,\gamma)$ in the language $\mathcal{L}_1^D$, there exists a quantifier free formula $\psi$ such that for 
any algebraically closed non-Archimedean extension $K \to L$, 
\[  \{ (x,\gamma) \in (L^\circ)^n \times \Gamma_L ^m \st \varphi(x,\gamma) \} = 
 \{ (x,\gamma) \in (L^\circ)^n \times \Gamma_L ^m \st \psi(x,\gamma) \}. 
\]
\end{theor}

\par 
We now consider the language $\mathcal{L}_2^D$ with three sorts $K, \Gamma$ and $k$, with the symbols of $\mathcal{L}_1^D$, and 
we add the ring language 
$+,-,\cdot,0,1$ for $k$, a function 
$\tilde{} : K \to k$ such that the restriction of 
$\tilde{}$ to $K^\circ$ is the classical reduction, i.e. is a surjective ring morphism whose kernel is $K^{\circ \circ}$ 
and $\tilde{x} =0$ if $|x|>1$. Finally, there is a function 
$Res : K^2 \to k$ such that 
\[
\begin{array}{cccc}
\text{Res}: & K^2 & \to & k \\
            & (x,y) & \mapsto & 
\begin{cases}
 \widetilde{\left(\frac{x}{y}\right)} \ \text{if} \ |x| \leq |y| \neq 0 \\
0  \ \text{otherwise}
\end{cases} 
\end{array}\]
Then, the proof of \cite[th. 3.8.2]{Li_rig} can be extended to:
\begin{theor} 
For any formula $\varphi(x,\gamma, \lambda)$ in the language $\mathcal{L}_2^D$, there exists a quantifier free formula $\psi$ such that for 
any algebraically closed non-Archimedean extension $K \to L$, 
\[  \{ (x,\gamma) \in (L^\circ)^n \times \Gamma_L ^m\times k_L^p \st \varphi(x,\gamma) \} = 
 \{ (x,\gamma) \in (L^\circ)^n \times \Gamma_L ^m \times k_L^p \st \psi(x,\gamma) \}. 
\]
\end{theor}
\begin{rem}
 \label{rem:ens_fini}
 One consequence of quantifier elimination is that if $S$ is a subanalytic set defined over some non-Archimedean algebraically closed field $K$ 
 which is finite, then for any non-Archimedean algebraically closed extension $K \to L$, 
 $S = S(L)$. 
 Indeed, if $S = \{x_1,\ldots,x_N\}$, then $X$ can be defined by the formula 
 \[ S = \{ x \in K^n \st \bigvee_{i=1 \ldots N} x= x_i \}. \]
 And with this definition, it is clear that $X = X(L)$.
\end{rem}
 We finally gather some results  that we will need.
 
 \begin{lemme}
\label{finite_image}
Let $X\subset \Gamma^n \times k^p$ be a definable set and let 
$f : X \to K^m$ be a definable map.
Then $f(X)$ is finite.
\end{lemme}

\begin{proof}
Since there are only countably many coefficients involved in the definition of $X$ and $f$, reducing the field $K$ if necessary, we can assume that $k$ and $\Gamma$ are countable.
Hence, $X$ is countable. But if $f(X)$ was infinite, it would contain some nonempty open ball, and would be uncountable (K being complete is uncountable). 
\end{proof}
 
\begin{prop}
\label{propfonccont}
Let 
$X\subset \Kom \times \Gamma^n$ be a subanalytic set, and 
$f: X \to \Gamma$ a subanalytic map. 
Then there exists a decomposition 
$X=X_1 \cup \ldots \cup X_N$ in subanalytic sets such that 
for all $i$ 
$f_{|X_i} : X_i \to \Gamma$ is continuous. \par
Actually, it is even defined as 
\[f_{|X_i} = \sqrt[a_i]{|g_i(x)| \alpha^{u_i}} \]
where $g_i$ is a continuous subanalytic map 
$U_i \to K$ for some subanalytic $U_i \subset \Kom$, and 
$u_i \in \Z^n$, $a_i\in \N^*$. 
\end{prop}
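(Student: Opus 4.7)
The plan is to apply Lipshitz's quantifier elimination (Theorem~\ref{theoquantel}) to the graph of $f$ and read off the desired form from its atomic subformulas. The graph $G(f) = \{(x,\alpha,\beta) \in X \times \Gamma : \beta = f(x,\alpha)\}$ is subanalytic in $\Kom \times \Gamma^{n+1}$, hence cut out by some quantifier-free $\mathcal{L}_1^D$-formula $\psi(x,\alpha,\beta)$.

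Every atomic $\mathcal{L}_1^D$-formula involving the $\Gamma$-variables can be brought, by collecting monomials on each side, into the form $|F(x)|\,\alpha^{a}\beta^{b} \diamond |G(x)|\,\alpha^{a'}\beta^{b'}$ with $\diamond \in \{\leq, <\}$, $F,G$ some $D$-functions, and $a,a'\in\N^n$, $b,b'\in\N$. Putting $\psi$ in disjunctive normal form and partitioning $X$ according to which conjunct witnesses each point of $G(f)$, we may assume that on each subanalytic piece $X_i \subset X$ the graph is cut out by a conjunction of such atomic inequalities. For every $(x,\alpha) \in X_i$ the set of admissible $\beta$ is the singleton $\{f(x,\alpha)\}$. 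Using $\alpha_j \in \R^*_+$ we can freely rearrange monomials, and after further partitioning according to which of $|F(x)|, |G(x)|$ vanish or dominate (so that ratios become absolute values of $D$-functions via $D_0$ and $D_1$), each atomic inequality actually involving $\beta$ takes the form $\beta^{c}\diamond|h(x)|\,\alpha^{v}$ with $c \in \Z\setminus\{0\}$, $v \in \Z^n$, and $h$ a $D$-function. Uniqueness of $\beta$ then forces an upper and a lower bound to coincide \emph{with equality} (strict bounds can never be attained), yielding on $X_i$ an identity $\beta^{a_i} = |g_i(x)|\,\alpha^{u_i}$ with $a_i \in \N^*$, $u_i \in \Z^n$, and $g_i$ a $D$-function. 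Since $f$ takes values in $\Gamma\subset\R^*_+$, we have $|g_i(x)| > 0$ on the projection $U_i$ of $X_i$ to $\Kom$, so $f_{|X_i}(x,\alpha) = \sqrt[a_i]{|g_i(x)|\alpha^{u_i}}$.

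To conclude, I apply Lemma~\ref{propcont} to each $g_i$ on $U_i$ and refine the partition accordingly so that each $g_i$ becomes continuous; then $f_{|X_i}$ is continuous as the composition of $x\mapsto|g_i(x)|$, the monomial $\alpha\mapsto\alpha^{u_i}$ on $\Gamma^n$, multiplication in $\R^*_+$, and the $a_i$-th root. The main difficulty lies in the second paragraph: isolating the equality $\beta^{a_i}=|g_i(x)|\alpha^{u_i}$ inside a conjunction of atomic monomial inequalities, handling strict versus non-strict cases, and justifying that ratios $|P(x)|/|Q(x)|$ which appear during the manipulation can be rewritten as $|g_i(x)|$ for a $D$-function $g_i$ via $D_0$ and $D_1$ after partitioning on the sign of $|P|-|Q|$. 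The quantifier elimination theorem does the heavy lifting; what remains is a combinatorial, if somewhat tedious, case analysis.
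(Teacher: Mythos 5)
Your argument is correct and follows essentially the same route as the paper: both pass to the graph, reduce via the quantifier-free description to a boolean combination of monomial inequalities $|F(x)|\gamma^{a}\bowtie|G(x)|\alpha^{u}$, use the fact that each fibre over $(x,\alpha)$ is a singleton in the densely ordered group $\Gamma$ to discard the strict (interval) conditions and retain an equality $\gamma^{a_i}=|g_i(x)|\alpha^{u_i}$, and then invoke Lemma~\ref{propcont} to make $g_i$ continuous after a further partition. The only cosmetic difference is bookkeeping: the paper restricts to pieces where $F,G\neq 0$ so that the quotient is again a $D$-function, while you partition on domination and use $D_0$, $D_1$ explicitly, which amounts to the same thing.
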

The analogous statement for ACVF is also true (and can be done with the same proof).
\begin{proof}
Let 
$G:= \Graph(f) \subset \Kom \times \Gamma^n \times \Gamma$ which is subanalytic.
We will use the variable 
$x$ (resp. $\alpha$, $\gamma$) for the variable of $\Kom$ (resp. $\Gamma^n$, $\Gamma$). 
It is enough to consider 
the case where 
$G$ is a finite intersection of subsets defined by inequalities 
\[ |F(x)| \gamma^a \bowtie |G(x)| \alpha^u \]
where $a\in \N$, $u\in \Z^n$, $\bowtie \in \{ =,>,<\}$ and 
$F,G : \Kom \to K$ are $D$-functions.
We can then restrict to  subanalytic sets where 
$F(x)$ and $G(x) \neq 0$, hence their quotient 
$\frac{F}{G}$ will be $D$-functions 
$\Kom \to K$. 
Hence $G$ appears as a finite union of subsets of the form 
\[ I_j = \{ (x,\alpha, \gamma )\in \Kom \times \Gamma^n \times \Gamma 
\st |g_j(x)| \alpha^{u_j}< \gamma^{b_j} < |h_j(x)| \alpha^{v_j} \} \ j=1\ldots M\]
and 
\[ \{ (x,\alpha, \gamma )\in \Kom \times \Gamma^n \times \Gamma 
\st |f_i(x)|  \alpha^{u_i} = \gamma^{a_i} \} \ i=1\ldots N \]
plus some additional conditions involving only the variables $(x,\alpha)$.\\
Since by assumption, for all 
$(x,\alpha) \in X$, 
$G_{(x,\alpha)} = \{ \gamma \in \Gamma \st (x,\alpha,\gamma) \in G\}$ is a singleton, 
we can in fact remove the intervals $I_j$. 
We then set temporarily 
\[X_i = \{(x,\alpha) \in X \st f(x,\alpha) = 
\sqrt[a_i]{   |f_i(x)|  \alpha^{u_i} } \}. \]
We then have 
\[f_{|X_i}(x,\alpha) = \sqrt[a_i]{ |f_i(x)| \alpha^{u_i} } \] 
with 
$f_i : \Kon \to K$ an $D$-function.
Now if we apply lemma
\ref{propcont}
to the map 
$f_i : \Kon \to K$ we know that 
we can shrink the $X_i's$ so that the map 
\[
\begin{array}{ccc}
X_i & \to & K\\
(x,\alpha) & \mapsto & f_i(x) 
\end{array}
\]
is continuous. 
As a consequence,  
\[
\begin{array}{ccc}
X_i & \to & \Gamma \\
(x,\alpha) & \mapsto & |f_i(x)| 
\end{array}
\]
is continuous, and then 
\[
\begin{array}{cccc}
f_{|X_i} : & X_i & \to & \Gamma \\
& (x,\alpha) & \mapsto & \sqrt[a_i]{ |f_i(x)| \alpha^{u_i} } 
\end{array}
\]
is also continuous.
\end{proof}
\subsection{Cells}
\label{sec:Cells}
Let us now explain the method used to define the mixed dimension with value 
in finite lower subsets of $\N^2$ and how we prove theorem \ref{theodimmixed}. 
First one has to keep in mind that for definable subsets $S \subset \Gamma^n$, 
one way to define and study the dimension of $S$, is to use 
the cell decomposition theorem \cite[Chapter 3]{VDD}. 
What does this theorem say?\par 
If $(i_1,\ldots,i_n) \in \{0,1\}^n$ one defines inductively what it means for  
$C \subset \Gamma^n$ to be an $\icell$-cell. 
\begin{itemize}
 \item A $(0)$-cell  (resp. a $(1)$-cell) is a singleton 
(resp. an open interval) in $\Gamma$.
\item If $C\subset \Gamma^n$ is a $\icell$-cell, 
an $(i_1,\ldots,i_n,0)$-cell (resp. $(i_1,\ldots,i_n,1)$-cell) is a subset 
$\{(x,\gamma) \in C \times \Gamma \st \gamma=f(x)\}$ 
(resp. $\{(x,\gamma) \in C \times \Gamma \st f(x)<\gamma<g(x)\}$) 
where $f:C \to \Gamma$ is a continuous definable function (resp. $f,g: C \to \Gamma$ are continuous definable functions such that 
$f<g$). 
\end{itemize}

Roughly speaking, an $\icell$-cell is build 
from graphs (for the $0$'s appearing in $i_1,\ldots,i_n$)  
and open domains delimited by functions $f<g$ (for the $1$'s which appear in $i_1,\ldots,i_n$). The dimension of an 
$\icell$-cell is $i_1+\ldots +i_n$.
The cell decomposition asserts that any definable set $S\subset \Gamma^n$ can be partitioned into 
finitely many cells $\{C_j\}_{j=1 \ldots N}$, and the dimension of $S$ can be defined as the maximum of the dimension 
of the $S_j$'s. Alternatively, 
\begin{equation}
\label{dimcell}
 \dim(S) = \max_{\underset{ C \subset S}{C \ \text{an} \ \icell\text{-cell} } } i_1 + \ldots + i_n.
 \end{equation}
In our situation, 
if $S \subset \KmGn$ is definable, for each $x\in K^m$, the fibre 
of $S$ above $x$ is a definable set $S_x \subset \Gamma^n$, hence can be decomposed into 
finitely many cells. The idea is to prove that the cell decomposition behaves 
nicely when $x$ moves. 
To formalize this we   
 introduce the notion of a cell $C \subset \KmGn$. 
So to speak they are \emph{relative cells}. 
If $X \subset K^m$ is a definable subset, we define what it 
means for $C \subset X \times \Gamma^n$ 
to be a $X-(i_1,\ldots,i_n)$ cell (definition \ref{deficell}), 
which is a continuous definable family of 
$(i_1,\ldots,i_n)$-cell (in the above sense) parametrized by $X$. 
Then we do things in close analogy
with what already exists for cells of $\Gamma^n$: we prove 
a cell decomposition theorem (theorem \ref{theomixedcelldec}) 
for definable subsets of $\KmGn$, then we define the dimension 
of a $X-\icell$-cell to be $(\dim(X), i_1+ \ldots+ i_n)\in \N^2$, and eventually, 
if $S\subset \KmGn$ is definable, we define 
\[ \dim(S) = \{ \dim(C) \st \ C \subset S \ \text{is a cell} \}. \]
Hence by definition, $\dim (S)$ is a finite lower set of $\N^2$ included in 
$\llbracket O,m\rrbracket \times \llbracket 0,n \rrbracket$. 
This dimension theory satisfies the properties mentioned in theorem \ref{theodimmixed}.
Once we have proved the (mixed) cell decomposition theorem 
(theorem \ref{theomixedcelldec}) the last non-trivial part of 
theorem \ref{theodimmixed} is $(iii)$. 
To handle it, we reduce to 
the simple situation of lemma 
\ref{lemmedimproj}, where we use the simple behavior of definable maps 
$f:K \to \Gamma$.\par
As will be seen in sections \ref{sec1.2}, the dimension theories 
of definable subsets $X \subset K^n$ in ACVF and 
subanalytic sets $X \subset \Kon$ share many properties. 
As a consequence, 
the results exposed in section \ref{sec2}, especially concerning the mixed cell decomposition, 
work in the frameworks of ACVF and subanalytic sets without difference. 
In \ref{sec1.4} we briefly expose the dimension 
theory of definable sets $S \subset \Gamma^n$ when 
$\Gamma$ is a totally ordered abelian divisible group, referring to \cite{VDD}\footnote{The case of 
totally ordered abelian divisible groups is only one case (and maybe the simplest) of an  o-minimal theory, 
which is exposed in the excellent book \cite{VDD}.}. 
In section \ref{sec2} the proofs are general enough to apply both 
to definable sets in ACVF and to subanalytic sets. Hence in this part, \emph{definable set} 
will mean definable in ACVF or subanalytic. 
In \ref{sec2.1}, we prove the mixed cell decomposition theorem 
for definable sets 
$S \subset \KmGn$. 
Sections   
\ref{sec2.2} and \ref{sec2.3} are devoted to the proof of 
the general properties of the mixed dimension of definable
subsets $S \subset \KmGn$ that we have announced.\\ 
In \ref{sec3} we explain how this relates to previous works. 

\subsection*{A word of warning} 
\label{subsec:warning}
In the proofs which will follow, we will always work with variables $\gamma \in \Gamma$. 
However, 
it may happen that for some $x\in K^m$, $|f(x)|= 0$. 
For instance the 
formula $|0| = \gamma$ does not have any sense if we want to quantify on a 
variable $\gamma \in \Gamma$. 
A correct formulation is to work with
$\Gamma_0 = \Gamma \cup \{0\}$. \par 
For instance, in all the constructions which will follow, at some point of the 
proofs, we should have distinguished between the cases 
$f(x)=0$ and $f(x)\neq 0$ when we consider (definable) functions $f : K^m \to K$. 
However, these two sets are definable, and on the first one, 
$|f(x)|=0$ is constant. 
On the other one, $f(x)\neq 0$, hence $|f(x)| \in \Gamma$.
Another way to see it is that  
\[(\Gamma_0)^n = \coprod_{k=0}^n ( \coprod_{ \binom{n}{k} } \Gamma^k )) .\] 
So that we can always replace $\Gamma_0$ by $\Gamma$. \par 
We have chosen to systematically omit this point in the proofs, and to 
write $\Gamma$ instead of $\Gamma_0$ in order to avoid some unprofitable complications. 
The careful reader should have this in mind.

\subsection{Dimension for the $\Gamma$-sort}
\label{sec1.4}

If $(\Gamma, <)$ is a totally ordered abelian group, the intervals 
$]a,b[ := \{ \gamma \in \Gamma \st a<\gamma<b \}$ form a basis of neighborhood 
for some topology, which is called the \emph{interval topology}. We will always refer to this topology on $\Gamma$. \par 
Remind that if $K$ is an algebraically closed non-trivially valued field, 
$\Gamma : = |K^*|$ is a totally ordered divisible commutative group.\par
A definable subset of $\Gamma^n$ is then a boolean combination of subsets 
\[ \{ \gamma \in \Gamma^n \st 
\gamma^a \lambda \}\]
where $\lambda\in \Gamma$  and $u\in \Z^n$.

\begin{defi}
\label{deficellGamma}
Let $n\in \N$, 
$(i_1,\ldots, i_n) \in \{0,1\}^n$.
An \textbf{$\icell$-cell} is a definable subset $C\subset \Gamma^n$ that we define inductively: 
\begin{enumerate}[i)]
\item 
A $(0)$-cell is a singleton $\{ \gamma\}$ with $\gamma \in \Gamma$. \\
A $(1)$-cell is in interval $]a,b[$ of $\Gamma$.
\item 
If $C \subset \Gamma^n$ is a $\icell$-cell, and 
$f,g : C \to \Gamma$ are continuous definable maps, and assume that for all $x\in C$, 
$f(x)<g(x)$. Then \\
$\Graph(f) :=\{ (x,y) \in (\Gamma)^n \times \Gamma \st x\in  C \ \and \ 
y=f(x) \}$ is an 
$(i_1,\ldots, i_n,0)$-cell. \\
$]g,f[ := \{(x,y) \in (\Gamma)^n \times \Gamma \st x\in  C \ \and \ 
f(x) < y <g(x) \}$, as well as $]- \infty , f[$ and $]f, +\infty[$ are 
$(i_1,\ldots, i_n,1)$-cells.

\end{enumerate}
A cell is an $\icell$-cell for some 
$\icell$, and a cell of dimension $d$ is an 
$\icell$-cell with 
$i_1+ \ldots +i_n =d$.
\end{defi}

\begin{figure}[h]
\begin{subfigure}[b]{0.45\linewidth}

\begin{tikzpicture}[scale=0.5]
\draw (-4,-2) -- (4,-2);
\draw (-4,-2)--(-4,4);
\draw [<->,thick] (-3,-2.2) --(3,-2.2) node[below,midway]{C};
\draw [dashed] (-3,{sin(-3 r)}) -- (-3,-2);
\draw [dashed] (3,{sin(3 r)}) -- (3,-2);
\draw (0,-0.5) node{$f$};
\draw plot [domain=-3:3] (\x,{sin(\x r)});
\end{tikzpicture}
\caption{$\Graph(f)$, an $(i_1,\ldots,i_n,0)$-cell}
\end{subfigure}
~~
\begin{subfigure}[b]{0.45\linewidth}

\begin{tikzpicture}[scale=0.5]
\draw (-4,-2) -- (4,-2);
\draw (-4,-2)--(-4,4);
\draw [<->,thick] (-3,-2.2) --(3,-2.2) node[below,midway]{C};
\draw [dashed] (-3,{2+exp(-3+3)} )-- (-3,-2);
\draw [dashed] (3,{2+exp(-3-3)} )-- (3,-2);
\draw plot [domain=-3:3] (\x,{2+exp(-3-\x)});
\draw (0,-0.5) node{$f$};
\draw (0,2.5) node{$g$};
\draw plot [domain=-3:3] (\x,{sin(\x r)});
\fill [pattern = north east lines]
 plot [domain=-3:3] (\x,{2+exp(-3-\x)} )
--
 plot [domain=3:-3] (\x,{sin(\x r)}) 
-- cycle;
\end{tikzpicture}
\caption{$]f,g[$, an $(i_1,\ldots,i_n,1)$-cell}
\end{subfigure}
\caption[Picture of cells]{We suppose that 
$C\subset \Gamma^n$ is an $(i_1,\ldots,i_n)$-cell and 
that $f,g : C \to \Gamma$ are continuous definable functions such that 
$f<g$. The horizontal axis represents $C$, and the vertical axis represent $\Gamma$.
On the left the graph of  we obtain $\Graph(f)$.
On the right, the hatched area represents $]f,g[$.}  
\end{figure}
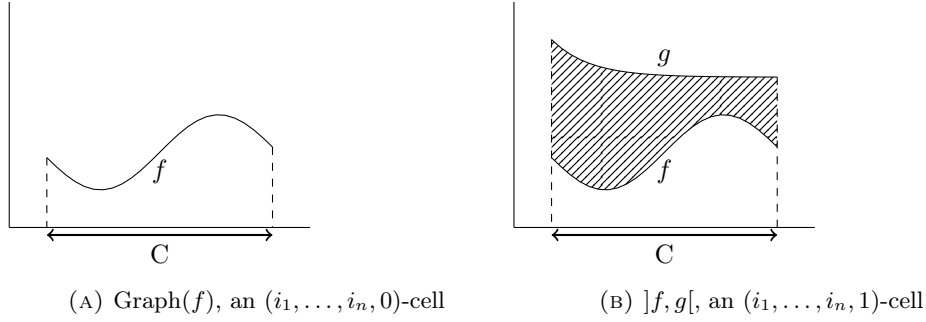

The following result can be found in 
\cite[3.2.11]{VDD}
\begin{theo}[Cell decomposition theorem]
Let $X$ be a definable subset of $(\Gamma)^n$. There exists a partition 
\[X = \coprod_{i=1}^m C_i\] where each $C_i$ is a cell.
\end{theo}

\begin{defi}
Let $X$ be a definable subset of $(\Gamma)^n$. The following numbers are equal, and will be denoted by 
$\dim(X)$.
\begin{enumerate}
\item 
\[\max_{\underset{C \ \text{is a cell} }{C \subset X} } (\dim (C) )  \]
\item 
\[\max_{k=1\ldots M} (\dim(C_k)) \]
where 
$X = \coprod_k C_k$ is any decomposition of $X$ into cells. This number does not depend on the decomposition.
\item 
The greatest integer $d$ such that there exists a coordinate projection 
$\pi : (\Gamma)^n \to (\Gamma)^d$ for which $\pi(X)$ has non empty interior.
\end{enumerate}
\end{defi}

These equivalences are proven in \cite[part 4]{VDD}, as well as this proposition:
\begin{prop}\cite[4.1.6]{VDD}
\label{propimagedim}
Let $f:S \to T$ be a definable maps between definable sets of some $\Gamma^n$, then 
$\dim(f(S)) \leq \dim(S)$.
\end{prop}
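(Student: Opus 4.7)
My approach would be based on the cell decomposition theorem. First, I would apply cell decomposition to partition $S = C_1 \sqcup \ldots \sqcup C_N$ into finitely many cells. Since $f(S) = \bigcup_i f(C_i)$ and dimension is a maximum over a finite union, it suffices to show $\dim f(C_i) \leq \dim C_i$ for each $i$. By a further refinement analogous to lemma \ref{propcont} (that a definable function becomes continuous on each piece after a suitable cell decomposition of its domain), we may additionally assume that $f$ is continuous on each cell.

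Next, observe that a cell $C \subset \Gamma^n$ of dimension $k$, say of type $(i_1,\ldots,i_n)$ with exactly $k$ ones, is in definable bijection with an open subset $U \subset \Gamma^k$ via the coordinate projection which forgets the ``graph'' coordinates (those indices $j$ with $i_j = 0$); this is immediate from the inductive definition of cells. Hence we are reduced to proving: for any definable continuous $g : U \to \Gamma^m$ with $U \subset \Gamma^k$ open, $\dim g(U) \leq k$.

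To conclude, I would use characterization (3) of dimension from the text: if $\dim g(U) > k$, then some coordinate projection $\pi : \Gamma^m \to \Gamma^{k+1}$ would send $g(U)$ onto a set with nonempty interior. So it suffices to prove that for any definable $h : U \to \Gamma^{k+1}$ with $U \subset \Gamma^k$ open, the image $h(U)$ has empty interior. I would attack this by induction on $k$: decompose the graph of $h$ in $\Gamma^{2k+1}$ into cells; since the graph projects bijectively onto $U$, each cell has dimension at most $k$; then analyze the projection onto the last $k+1$ coordinates cell-by-cell, using the inductive structure of cells to control what coordinate projections can do.

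The main obstacle is precisely this last ``non-surjectivity'' step, the o-minimal analog of the classical fact that a lower-dimensional set cannot fill a higher-dimensional one. A cleaner way to formalize it is via a fiber dimension formula: one shows that there is a finite partition of $f(S)$ such that the fiber dimension of $f$ is constant on each piece, with $\dim S$ equal to base dimension plus fiber dimension on each piece. The inequality $\dim f(S) \leq \dim S$ then follows immediately by selecting the piece realizing $\dim S$ and dropping the (nonnegative) fiber dimension term. This is the route developed in detail in \cite[Chapter 4]{VDD}, which is precisely the reference being invoked here.
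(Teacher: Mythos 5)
First, note that the paper offers no proof of this proposition: it is imported verbatim from \cite[4.1.6]{VDD}, so there is no in-text argument to compare yours against. Your chain of reductions is sound as far as it goes: cell decomposition together with $\dim(A\cup B)=\max(\dim A,\dim B)$ reduces to a single cell; a cell of dimension $k$ is definably homeomorphic, via the coordinate projection forgetting its graph coordinates, to an open cell of $\Gamma^k$; and characterization (3) of the dimension reduces everything to the claim that a definable $h\colon U\to\Gamma^{k+1}$ with $U\subset\Gamma^k$ open has image with empty interior. But that claim \emph{is} the entire content of the proposition, and your treatment of it has a genuine gap. In your inductive sketch you assert that each cell of $\Graph(h)\subset\Gamma^{2k+1}$ has dimension at most $k$ ``since the graph projects bijectively onto $U$''; the inference from ``$C$ injects into $\Gamma^k$ under a coordinate projection'' to ``$\dim C\leq k$'' is precisely an instance of the invariance of dimension under definable injections, i.e.\ of the statement being proved. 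As written, the induction is circular, and the continuity refinement you invoke (the analogue of lemma \ref{propcont}) does nothing to break the circle.

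The repair you name in your final paragraph is the correct one, and it is exactly how \cite[part 4]{VDD} proceeds: one first proves the fiber dimension formula $\dim S=\max_i\bigl(\dim S(i)+i\bigr)$ for a coordinate projection (the $\Gamma$-sort analogue of proposition \ref{propdimfamily}), which can be established directly from the cell decomposition theorem without circularity, since a decomposition of $S$ adapted to the projection exhibits each cell of $S$ as a family of fiber cells over a cell of the base. Applying this formula to $\Graph(f)$ via its two projections gives $\dim f(S)\leq\dim\Graph(f)=\dim S$ in one line, with no need for the reduction to open cells at all. So your proposal has the right skeleton but defers the load-bearing lemma to the same external reference the paper itself cites; to make the argument self-contained you would need to prove the fiber formula (equivalently, the non-space-filling lemma) from cell decomposition directly.
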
  

\subsection{Dimension of definable subsets $S \subset K^n$ in ACVF}
\label{subsection:dimACVF}
If $K$ be a model of ACVF.
One can associate to definable sets $S \subset K^n$ a dimension, 
denoted $\dim(X)$ which satisfies many natural properties. 
This is due to Lou Van Den Dries \cite{VDDdim}, where it is proved for more general Henselian fields. 
A concise explanation of these results can also be found in the author's thesis \cite[section 3.2]{MarThese}. \par 
Let  $\X$ be a $K$-scheme of finite type and let $U \subset \X(K)$. 
Since $K$ is algebraically closed, we can identify $U$ with a subset of $\X$ and we denote by $\dimZ(U)$ the Zariski dimension of the Zariski closure of $U$ in $\X$ : 
\[\dimZ(U) := \dim ( \overline{U}^{\text{Zar}}).\]
\begin{defi}
Let $\X$ be an affine $K$-scheme of finite type. 
We will say that 
$U \subset \X(K)$ is a special open subset if it is a finite intersection: 
\[U = \Big( \bigcap_{i=1}^M \{ x\in \X(K) \st |f_i(x)| < |F_i(x)| \} \Big) \cap 
\Big( \bigcap_{j=1}^N \{ x\in \X(K) \st |g_j(x)| \leq |G_j(x)| \neq 0 \} \Big) \]
where $f_i,F_i,g_j,G_j \in \mathcal{O}(\X)$.
\end{defi}
\begin{theo}
\label{theodimK}
Let $d$ be an integer, and $S$ be a nonempty definable subset of $K^n$. 
We denote by $\dim(X)$ the integer $d$ that can be defined by the equivalent properties: 
\begin{enumerate}
\item 
There exists a decomposition 
$S = \cup_{i=1}^m S_i$ where for each $i$, $S_i \neq \emptyset$, $S_i = \X_i \cap U_i$ where 
$\X_i$ is an irreducible variety of (Zariski) dimension 
$d_i$ and $U_i$ is a special open subset of $K^n$, such that $d =\max_i d_i$. This 
number is independent of the choice of the decomposition. 
\item 
$d= \dim_{Zar} ( \overline{S}^{Zar} )$.
\item 
\noindent \[ d= \max_{\underset{K\subset L}{ x\in S(L)}} \  \td(K(x)/K)\] where 
$K \subset L$ describes the  class of small extensions of algebraically closed valued fields of $K$.
\item 
\noindent \[ d= \max_{\underset{K\subset L}{ x\in S(L)}} \  \d(K(x)/K)\] where 
$K \subset L$ describes the  class of small extensions of algebraically closed valued fields of $K$.
\item 
The greatest integer $d$ such that 
there exists a coordinate projection 
$\pi :K^n \to K^d$ such that $\pi(S)$ has nonempty interior (w.r.t. the topology on 
$K^n$ induced by the valuation).
\end{enumerate}
\end{theo}
The characterization $(4)$ does not appear in \cite{VDDdim} but can be proved using the ideas of the next section. 
Let us sketch its proof.
In one hand, it is clear that the integer $d$ defined by $(4)$ is greater or equal than 
the integer $d$ defined by condition $(3)$. 
So, in order to prove the reverse inequality using $(5)$, 
it is enough to show that if $S$ contains an open ball then there is some 
$x\in S(L)$ such that $\d(K(x)/K) =n$. Indeed, if 
$S$ contains the closed ball of center $0$ and radius $\gamma \in \Gamma$, then the norm 
$\eta : K[T_1,\ldots, T_n] \to \Gamma$ defined by 
$\eta (\sum a_\nu T^\nu) = \max |a_\nu| \varepsilon^{|\nu|}$ induces a norm on 
$L=K(T_1,\ldots,T_n)$, and we can then take $x = (T_1,\ldots,T_n)$ which belongs to 
$S(L)$, and satisfies 
$d(K(x) /K) =n$. \par 
Finally, here are some properties satisfied by this dimension.
\begin{prop}
Let $f:X \to Y$ be a morphism of definable sets. 
\begin{enumerate}
\item 
$\dim (f(X)) \leq \dim(X)$.
\item If $f$ is bijective $\dim(X)=\dim(Y)$.
\end{enumerate}
\end{prop}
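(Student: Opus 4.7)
The plan is to prove both assertions using characterization (3) of theorem \ref{theodimK}, namely that for a definable $S\subset K^n$, $\dim(S)$ equals the supremum of $\td(K(x)/K)$ over points $x \in S(L)$ as $L/K$ ranges over small algebraically closed extensions. This characterization is much better suited to functorial statements than characterization (5) (nonempty interior under coordinate projections).

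For assertion (1), I would fix a small algebraically closed extension $L/K$ and an arbitrary point $y \in f(X)(L)$, then lift $y$ to a point $x$ of $X$ living in some further small algebraically closed extension $L'/L$ with $f(x)=y$. Such a lift exists because the fibre $f^{-1}(y)$ is a nonempty definable set cut out by finitely many conditions with parameters from $L$, hence is realised in some small algebraically closed extension (by completeness of ACVF together with a standard L\"owenheim--Skolem argument). Since the graph of $f$ is $K$-definable, each coordinate of $y=f(x)$ is algebraic over $K(x)$, so $K(y) \subseteq \overline{K(x)}$ and therefore $\td(K(y)/K) \leq \td(K(x)/K) \leq \dim(X)$. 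Taking the supremum over $y$ yields $\dim(f(X)) \leq \dim(X)$.

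For assertion (2), the key remark is that a definable bijection $f:X\to Y$ has a definable inverse, because the graph of $f^{-1}$ is obtained from the graph of $f$ by swapping its two factors and is therefore definable by the same formula up to a permutation of variables. Applying part (1) to $f$ then yields $\dim(Y)=\dim(f(X)) \leq \dim(X)$, and applying part (1) to $f^{-1}$ yields $\dim(X)=\dim(f^{-1}(Y))\leq \dim(Y)$; combining these two inequalities gives the desired equality.

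The only genuinely delicate point in this plan is the existence of a lift $x$ of $y$ lying in a small algebraically closed extension, and this is a standard model-theoretic fact rather than a real obstacle. Everything else is a formal manipulation of the transcendence-degree characterization of $\dim$. If one instead tried to argue via the interior characterization (5), one would need to analyse how coordinate projections interact with the definable map $f$, which looks considerably more painful and would obscure why a bijection should preserve dimension.
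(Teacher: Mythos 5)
Your proof is correct, but it is worth noting that the paper does not actually prove this proposition at all: it is stated as part of the recollection of Van den Dries's results and deferred to \cite{VDDdim}. The closest argument the paper does carry out is for the subanalytic analogue (proposition \ref{propinvar}), where assertion (1) is deduced from lemma \ref{lemmeimagelesspoint} together with the Berkovich characterization $\dim(S)=\max_{x\in S_\Berko} d(\H(x)/K)$ of theorem \ref{theodimsub}; there the inequality comes from exhibiting an embedding of non-Archimedean fields $\H(y)\to\widehat{\H(x)^{\alg}}$ and comparing $d(\cdot/K)$. Your route via characterization (3) of theorem \ref{theodimK} (transcendence degree of generic points) is the natural ACVF counterpart of that argument and is, if anything, cleaner: the reduction of (2) to (1) via the definable inverse is exactly right, and the lift of $y$ can in fact be taken inside $L$ itself, since $L\models\mathrm{ACVF}$ and membership in $f(X)$ is an existential condition, so no further extension $L'$ is needed.

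The one step you pass over too quickly is the assertion that the coordinates of $y=f(x)$ are algebraic over $K(x)$ ``since the graph of $f$ is $K$-definable.'' Definability of $y$ over $K(x)$ only places $y$ in the model-theoretic definable closure of $K(x)$; to conclude field-theoretic algebraicity you need the standard but non-trivial fact that in ACVF the algebraic closure of a subfield, intersected with the field sort, is its field-theoretic algebraic closure (equivalently, any field point with only finitely many conjugates over $K(x)$ is algebraic over $K(x)$ in the usual sense). This is proved for instance in Haskell--Hrushovski--Macpherson \cite{HHM} and is exactly where the hypothesis that $f$ is definable is used, so it should be cited or argued rather than asserted. With that reference supplied, the proof is complete.
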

\section{Dimension of subanalytic sets}
\label{sec1.2}
Let $K$ be an algebraically closed non-Archimedean field.
In this section we make a systematic study of the dimension of subanalytic sets 
$S\subset \Kon$. Using Berkovich spaces, we prove some results which were already known in \cite{LRdim}, 
but we also give new results such as invariance under subanalytic isomorphism and a formula \ref{propdimfamily} 
which calculates the dimension of a subanalytic set in terms of the dimension of some fibers $S_x$. \par 
In this part, we expose and extend the theory of dimension of subanalytic sets of 
$\Kon$ as introduced in \cite{LRdim}. 
If $S\subset \Kon$ is a subanalytic set, its dimension
$\dim(S)$ 
is defined \cite[2.1]{LRdim} as the 
greatest integer $d$ such that there exists a coordinate projection 
$\pi : \Kon \to \Kod$ for which $\pi(S)$ has nonempty interior. \par  
Many of the results we present were already known, however some were 
not (at least not to our knowledge). 
For instance, we explain how the dimension of a subanalytic set $S \subset \Kon$ has 
a nice interpretation in terms of $S_{\Berko}$, a subset of $\B_K^n$ that we attach to $S$. 
This new characterization (see theorem \ref{theodimsub} (5) ) 
enables us to give a new treatment of the dimension of subanalytic sets, 
and to prove some results which were 
not known in positive characteristic, in particular a good behavior of 
dimension in terms of fibers (see \ref{propdimfamily}), and the invariance under definable bijection (see \ref{propinvar}).\par 
Y. Firat Celikler has also published some results about dimension 
of $D$-semianalytic sets \cite{Cel05,Cel10}. 
His results apply to a broader class of sets, 
namely subsets $X \subset (K^\circ)^n$ where $K$ is 
not assumed to be algebraically closed (and 
not even a rank one valuation in \cite{Cel10}). 
Whereas our major tool is the use of 
Abhyankar points, Berkovich spaces are not mentioned in this work, 
which rate relies on generalized ring of fractions and a very interesting 
normalization lemma 
\cite[section 5]{Cel05} and \cite[section 3]{Cel10}. 
These articles contain especially results about dimension of subsets of $(K^\circ)^n$ in terms 
of the dimension of some fibers \cite[theorem 6.6]{Cel05} and\cite[theorem 5.1]{Cel10}. 
However, in these results, the characteristic is assumed to be $0$. 
 \par 
 In the same way, if the characteristic of $K$ is positive, the analytic structure on $K$ is b-minimal \cite[theorem 6.3.7]{CluLip}. 
 As a consequence, if $\cha (K)=0$, most of the results of this section follow form 
 the good dimension theory 
 which has been developed  for $b$-minimal theories (see \cite{CluIntrob} for an introduction and 
 \cite[section 4]{CluLoBmin} for a complete exposition). \par 
Let us finally mention that the first order theory defined by the analytic language $\Ldan$ is a $C$-minimal theory \cite{LRonedim}. 
$C$-minimal theories have been studied in \cite{MaStVa,HaMaCell,CubThese}, and some 
results of the dimension theory of subanalytic sets follow from the general treatment 
which is made in \cite{HaMaCell}.

\begin{rem}
For a subset $S \subset \Kon$, we will define the dimension of $S$ as the minimum 
of the integers $d$ such that there exists some coordinate 
projection $\pi : \Kon \to \Kod$ such that 
$\pi(S)$ has nonempty interior. 
We want to stress out that it is not obvious that this gives a satisfactory theory of dimension. 
For instance, let us assume that 
\begin{equation}
\label{eq:bigfield}
 \Card (\widetilde{K}) = \Card( \Ko) .
 \end{equation}
For example $K = \widehat{ \C((t))^{alg}}$ satisfies this condition. 
Let then 
$\alpha : \widetilde{K} \to \Ko$ be a bijection (a surjection would work actually), 
and let $\beta : \widetilde{K} \to \Ko$ be a section of the reduction map. 
Let then 
\[ V := \{  ( \alpha(\lambda), \beta (\lambda) ) \st \lambda \in \widetilde{K} \}.\]
Since by construction the family $\{  \beta( \lambda) \}_{\lambda \in \widetilde{K} }$ is discrete, $V$ is an infinite discrete union of singletons, hence a 
$K$-analytic submanifold
\footnote{The 
word manifold has to be understood in the \emph{naive} sense, i.e. by analogy with the definition of a real or complex analytic manifold, see \cite{SerreLG}.}
of ${\big(\Ko}\big)^2$ of pure dimension $0$. 
However, if $\pi : \big(\Ko \big)^2 \to \Ko$ is the first projection, 
$\pi(V) = \Ko$. 
In this example, the projection of a submanifold of 
dimension $0$ (as a manifold) has dimension $1$ (as a manifold).\par 
With the same kind of idea, it is also possible to find 
$S\subset \big(\Ko \big)^2$ a submanifold 
of dimension $0$ as a manifold such that the analytic dimension of 
$\overline{S}$ is $1$. 
Take 
\[\{ S = \big(x, \pi^n \beta (\alpha^{-1} (x) ) \big) \}_{x\in \Ko , n\in \N}\] 
Then $S$  
is a submanifold of dimension $0$, and contains $ \Ko \times \{0\}$ in 
its adherence. \par 
As we will see, these kind of pathologies do not appear with subanalytic sets, and as 
a matter of fact, $S$ and $V$ are not subanalytic.
\end{rem}

\subsection{Subsets of $\Kon$ and subsets of $\B^n_K$} 
\label{sec:sub_ber}

Clearly, if $K \to L$ is a non-Archimedean extension, and $\varphi$ is an 
$\Ldan$-formula defined over $K$, then $\varphi$ is also an $\Ldan$-formula defined over $L$.
We will repeatedly use the following remark.
\begin{rem}
\label{remext}
Let $K \hookrightarrow L$ be a non-Archimedean extension.
There is a continuous map 
\begin{equation}
\label{eq:defip}p: L^n \to (\An_K)^\an.
\end{equation}
If $x=(x_1,\ldots,x_n) \in L^n$, then 
for $f\in K [X]$, $|f(p(x))| = |f(x)|_L$.
\end{rem}

\begin{defi}
\label{defi:sub_berko}
 Let $S \subset \Kon$ be a subanalytic set. 
 Let $x\in \Bn_K$ and let $x_i \in \H(x)$ be the evaluation of 
 the variable $T_i$'s in $\H(x)$. 
 We define the subset 
 $S_\Berko \subset \Bn_K$ by saying that 
 $x \in S_\Berko$ if and only if 
 $(x_1,\ldots,x_n) \in S(\widehat{ \H(x)^\alg})$.
\end{defi}

We then have the following tautological result: 
\begin{lemme}
\label{lemmetaut}
Let $S$ be a subanalytic set, $K \to L$ some non-Archimedean algebraically closed extension. We consider 
$p : L^n \to \Bn_K$ as defined in remark \ref{remext}. 
Then 
\[S(L) = p^{-1}(S_{\Berko} ) \]
\end{lemme}
\begin{proof}

Let $m=(m_1,\ldots,m_n) \in M^n$, and 
let $x=|\cdot|_m :=p(m) \in \Bn_L$ the associated point in $\Bn_L$. 
Then there exists a unique non-Archimedean embedding 
$\H(x) \to L$ such $x_i=m_i$ where 
we consider the natural maps 
$\TnK \to \H(x) \to L$ and $x_i$ (resp. $m_i$) are the images of 
$X_i$. 
Hence, for 
$f\in \Ldan$, 
$|f(x)|=|f(m)|$, and 
$m$ satisfies $\varphi$ if and only if 
$x$ does.
\end{proof}

\begin{defi}
We will say that a map 
$\pi : L^n \to L^d$ is a coordinate projection if there exists a sequence 
$1\leq i_1 < \ldots < i_d \leq n$ such that $\pi$ is defined by 
\[ \pi(x_1,\ldots,x_n) = (x_{i_1}, \ldots, x_{i_d} ).\] 
\end{defi}

\begin{defi}
Let $X \subset \Kon$ and $Y\subset \Kom$ be subanalytic sets. 
We say that a map $f : X \to Y$ is a \textbf{subanalytic map}\index{subanalytic!map} if 
its graph $\Graph(f):= \{(x,y)\in X\times Y \st y=f(x)\}$ is a subanalytic set 
of $\Kov{m+n}$. 
\end{defi}
We want to list some elementary remarks and compatibility results about all these 
definitions. 

\begin{enumerate}
\item We fix $X \subset \Kon$ and $Y\subset \Kom$ some subanalytic sets
and  $f : X \to Y$ a subanalytic map. 
Let $K \to L$ be some non-Archimedean algebraically closed extension. 
Then we can naturally define a subanalytic map 
$f_L : X(L) \to Y(L)$ such that the following diagram commutes. 
\[ \xymatrix{
X \ar[r]^f \ar[d] & Y \ar[d] \\
X(L) \ar[r]_{f_L} & Y(L)
}\]
Indeed, $\Graph(f)$, the graph of $f$, is a subanalytic set of $\Kov{m+n}$ which satisfies the conditions 
\begin{flalign*}
&\Graph(f) \subset X \times Y  &\\
&\forall x\in X, \ \exists ! y\in Y \st  (x,y)\in \Graph(f) &
\end{flalign*} 
These two conditions are  equivalent to say that $\Graph(f)$ is the graph of a function 
$X \to Y$. 
Thanks to the uniform quantifier elimination theorem, these two conditions are still satisfied when we pass to 
$L$. 
\begin{flalign*}
&\Graph(f)(L) \subset X(L) \times Y(L) & \\
&\forall x\in X(L), \exists ! y\in Y(L) \st (x,y)\in \Graph(f)(L)  &
\end{flalign*}
Hence $\Graph(f)(L)$ is a subanalytic set of 
$\Kov{m+n}$ which defines a subanalytic map  $f_L : X(L) \to Y(L)$. 
\par 
Using exactly the same argument, we can prove that 
$f$ is injective (resp. surjective, bijective) if and only if 
$f_L$ is injective (resp. surjective, bijective). Also, with the same ideas, 
$f$ is continuous if and only if $f_L$ is continuous.
\item Let now 
$S \subset \Kov{m+n}$ and $T\subset \Kom$ be some subanalytic sets and 
let us assume that if $(x_1,\ldots,x_{m+n}) \in S$, then $(x_1,\ldots,x_m)\in T$. 
We then introduce the map 
$\pi : S \to T$ which is the projection on the first $m$-coordinates.
It is then clear that we obtain an associated map 
\[ \xymatrix{
S_\Berko \ar[r]^{\pi_\Berko} & T_{\Berko} 
} \]
which is actually induced by the map 
$\B^{m+n}_K \to \B^m_K$ corresponding to the projection on the first m coordinates. 
We want to prove that if $\pi$ is surjective (resp. injective) then 
$\pi_\Berko$ is surjective (resp. injective).  
\par If $\pi$ is surjective then $\pi_\Berko$ is also surjective. 
To see it, take $y\in T_\Berko$. 
Let then $K \to L$ be some non-Archimedean and algebraically closed extension 
and $(y_1,\ldots,y_m) \in T(L)$ such that 
$y = p(y_1,\ldots,y_m)$ where 
$p : L^n \to \B^m_K$ is the map introduced in \ref{remext}. 
We then obtain the commutative diagram 
\[ \xymatrix{
S(L)  \ar[r] \ar[d]_{\pi_L} & S_\Berko \ar[d]^{\pi_\Berko} \\
T(L) \ar[r] & T_\Berko 
} \]
Since $\pi$ is surjective, $\pi_L$ is also surjective, hence we can find 
$(x_1,\ldots,x_{m+n})$ some antecedent of $y$ by $\pi_L$. Then if 
$x$ is the image of $(x_1,\ldots,x_{m+n})$ in $S_\Berko$, by assumption 
$\pi_\Berko(x)=y$. 
\par Let us now assume that $\pi$ is injective.
Let then $y\in T_\Berko$ and $x,x' \in S_\Berko$ such that 
$\pi_\Berko(x)= \pi_\Berko(x') = y$. 
We obtain the diagram of non-Archimedean fields:
\[ \xymatrix{
 & \H(y) \ar[ld] \ar[rd] & \\
 \H(x) & &\H(x')
 }\]
 We can then find an algebraically closed non-Archimedean field $L$ which completes this diagram 
 \[ \xymatrix{
 & \H(y) \ar[ld] \ar[rd] & \\
 \H(x)\ar[rd] & &\H(x') \ar[ld] \\
 & L &
 }\]
Let us then write $(y_1,\ldots,y_m) \in \H(y)^m$ the evaluations of the $X_1,\ldots,X_m$ in $\H(y)$, and $(x_1,\ldots,x_{m+n})\in \H(x)$ the evaluations of $X_1,\ldots,X_{m+n}$ in 
 $\H(x)$ and similarly $(x'_1,\ldots,x'_{m+n})\in \H(x')$ the evaluations of $X_1,\ldots,X_{m+n}$ in 
 $\H(x')$. 
 By definition of $\pi$, if we work in $L$, we have 
 $y_i=x_i=x'_i$ for $i=1\ldots m$. 
 Hence $(x_1, \ldots,x_{m+n})$ and $(x'_1,\ldots,x'_{m+n})\in S(L)$ and by assumption 
 $p_L(x_1, \ldots,x_{m+n}) = p_L(x'_1, \ldots,x'_{m+n})=(y_1,\ldots,y_m)$. 
 Since $p_L$ is injective, $(x_1, \ldots,x_{m+n})=(x'_1, \ldots,x'_{m+n})$, hence 
 since $x\in S_\Berko$ corresponds to the image of 
 $(x_1,\ldots,x_{m+n})$ by the map $(L^\circ)^{m+n}$ and similarly for $x'$, it follows that $x=x'$.
\item According to the previous point, if we are in the situation 
(1) with a subanalytic map $f : X \to Y$, then since the projection
$\Graph(f) \to X $ is a bijection, according to (2) we deduce that the induced map 
 $\Graph(f)_\Berko \to X_\Berko$ is bijective. Hence this allows to define a natural map 
 $f_\Berko : S_\Berko \to T_\Berko$. 
 According to (2), $f_\Berko$ is injective (resp. surjective, bijective) if and only if 
$f$ is.
\end{enumerate}

\subsection{Proving general properties with Berkovich points}
\label{sec:gen_prop}
If $K \to L$ is a non-Archimedean extension and 
$y=(y_1,\ldots,y_n)\in L^n$, we will denote by 
$K(y) \subset L$ the subfield of L $K(y_1,\ldots,y_n)$ which is generated by $y_1,\ldots,y_n$. 
By definition, it then satisfies 
$K \subset K(y) \subset L$.
\begin{defi}
\label{defidimcorps}
If $K \to L$ is an extension of non-Archimedean field, the dimension of $L$ over $K$ is 
\[  d(L/K) = \text{tr deg} (\widetilde{L} / \widetilde{K} ) + 
\dim_\Q \Q \otimes_\Z  (|L^*|/ |K^*|) .\]
We will of often use that if $K \to L \to M$ are extension of non-Archimedean fields 
\begin{equation}
\label{transdeg}
d(M/K) = d(M/L)+d(L/K).
\end{equation}
We will also use that 
\begin{equation}
d(\widehat{L^\alg}/K)=d(\widehat{L}/K) = d(L/K).
\end{equation}
\end{defi}

There is a good dimension theory for $k$-analytic spaces (see \cite[p~34]{Berko90} and 
\cite{Duc07}). 
This dimension has a simple interpretation in terms of the 
definition \ref{defidimcorps}. Indeed if $X$ is 
a $K$-analytic space \cite[2.5.2]{Berko93}:
\index{dimension!of a $k$-analytic space}
\begin{equation}
\label{eq:dimBerko}
 \dim (X) = \sup_{x\in X} d( \mathcal{H}(x) /K).
 \end{equation}
We will show below that a similar result also holds 
for subanalytic sets (\ref{theodimsub} (5)). 

\begin{rem}
We want to give examples which explain why some attempts of replacing $d( \mathcal{H}(x) /K)$ by other numbers do not work. \par 
Let us try to replace $d( \mathcal{H}(x) /K)$ by $\td(\H(x) /K)$. If $\eta \in \B_K$ is the Gauss point, then 
$\td( \H(\eta) / K) = +\infty$. This can be seen because 
$K \subset K\langle T \rangle \subset \H(\eta)$ and $K\langle T \rangle$ 
contains infinitely many algebraically independent elements over K. 
For instance in mixed characteristic, the functions 
$T \mapsto exp(\lambda \cdot T)$ for some $| \lambda| \leq p^{-\frac{1}{p-1}}$ which are $\Q$-linearly independent 
gives such a family. \par 
When $X \subset \Bn_K$ is an affinoid space, one might try to consider the sup over non-Archimedean extensions $K \to L$ 
\[ \sup_{(x_i) \in X(L)} \td(K(x)/K). \]
Contrary to the previous attempt, this would give a finite number, but this would not be independent of the 
presentation of $X$.  
For instance, let $f\in K \langle T \rangle$ be a series such that 
$T$ and $f(T)$ are algebraically independent over $K$ and $\|f\| \leq 1$. 
And let $X \subset \B^2_K$ be the set defined by the equation $y=f(x)$. 
Then $\B_K \simeq X$ via the map $x \mapsto (x,f(x))$. 
If $\eta \in \B_K$ is the Gauss point (any point of type 2,3 or 4 would work), with the 
identification 
$K \subset K\langle T \rangle \subset \H(\eta)$, the 
point $(T,f(T)) \in X( \H(\eta) )$ and $\td ( (K(T,f(T) ) ) =2$ which is not the value 
we expect.
In other words, let $X \subset \Bn_K$ be some presentation of some affinoid space.
If $x\in X$ and if $\underline{x}:=(x_i) \in \H(x)^n$ is the coordinate of $x$ induced by the presentation of $X$, then 
$\td(K(\underline{x}) /K) $ is not independent of the presentation of $X$.
\end{rem}

\begin{lemme}
\label{lemmeimagelesspoint}
Let $X \subset \Kon$ and $Y\subset \Kom$be subanalytic sets.
Let $f:X \to Y$ be a subanalytic map.
Let $x \in X_{\Berko}$ and $y := f_\Berko (x) \in Y_\Berko$. 
Then 
$ d(\H(y)/K) \leq d(\H(x)/K)$. 
\end{lemme}
Of course, if $f$ is a morphism of $k$-affinoid spaces, then this is true because 
there is a map of non-Archimedean fields $\H(y) \to \H(x)$. 
But in our context where $f$ is just a subanalytic map, 
there is no given map $\H(y) \to \H(x)$.
\begin{proof}
Let us consider $(x_1,\ldots,x_n) \in X(\widehat{\H(x)^\alg})$ where for each $i$, 
$x_i\in \H(x)$ is the evaluation of 
the coordinates $X_i$ in $\H(x)$. 
Since we have a commutative diagram
\[ 
\xymatrix{
X(\widehat{\H(x)^\alg}) \ar[r]^f \ar[d]_p & Y(\widehat{\H(x)^\alg}) \ar[d]^p \\
X_\Berko \ar[r]_{f_\Berko} & Y_\Berko }
\]
$f(x_1,\ldots,x_n) \in ((\widehat{\H(x)}^\alg)^\circ)^m$. 
Since $y = p(f(x_1,\ldots,x_n))$, 
it follows that there is a non-Archimedean extension 
$\H(y) \to \widehat{\H(x)^\alg}$ and the result follows because 
\[d(\H(y)/K) \leq d(\widehat{\H(x)^\alg} /K) = d(\H(x)/K).\]
\end{proof} 

\begin{defi}
If $c\in K^n$, and $(r_1,\ldots,r_n)\in \R_+^n$, we define 
$\eta_{c,(r_1,\ldots,r_n)} \in \mathbb{A}^{n,an}_K$ by the formula 
\[
\begin{array}{cccc}
\eta_{c,(r_1,\ldots,r_n)} : & K[X] & \to & \R_+ \\
 & \displaystyle  f= \sum_{\nu=(\nu_1 \ldots \nu_n) \in \N^n} a_\nu \prod_{i=1}^n(X_i-c_i)^{\nu_i} & \mapsto & 
 |f(\eta_{c,(r_1,\ldots,r_n)})| = \displaystyle \max_{\nu \in \N^n} |a_\nu|r^{\nu}
\end{array}
 \]
where $r^\nu = r_1^{\nu_1} \cdots r_n^{\nu_n}$. \par 
Also if $f= \sum_{\nu \in \N^n} f_{\nu} X^\nu \in K[X_1,\ldots,X_n]$, we set 
\[\|f\| = \max_{\nu \in \N^n} |f_\nu|.\]
Then $\| f\| = \eta_{(0, (1,\ldots,1))}$.
\end{defi}
\begin{lemme}
\label{lemmegauss}
Let $K \hookrightarrow L$ be a non-Archimedean extension.
Let $x_1,\ldots,x_s \in L^*$ such that the real numbers
$r_i := |x_i|$, are $\Q$-linearly independent in $\Q \otimes_\Z (\R^*_+/|K^*|)$.
Let $(y_1,\ldots,y_t) \in L^\circ$ such that the $|y_i|=1$ and 
$\widetilde{y_i}$ are  algebraically 
independent over $\widetilde{K}$. 
Then 
the the image of 
$(x_1,\ldots, x_s,y_1,\ldots, y_t) \in L^{s+t}$ in $\mathbb{A}^{s+t,an}_K$ through 
$p:(L^\circ) \to \B^n_K$ is 
$\eta_{0,(r_1,\ldots,r_s,1,\ldots,1)}$.
\end{lemme}
\begin{proof}
We set $K[X,Y]:=K[X_1,\ldots,X_s,Y_1,\ldots,Y_t]$. \par 
Step 1. We claim that if 
\[ \ g= \sum_{\mu \in \N^t} g_{\mu} Y^\mu \in K[Y],\]
then
\[ 
|g(y)| = \|g\|,\]
where  $\| g\| = \max_{\mu\in \N^t} |g_\mu |$.  
To prove this we can assume that $\|g\|=1$, i.e. that $g\in K^\circ[Y] \setminus K^{\circ \circ}[Y]$. 
We can then consider $\tilde{g} = \sum_\mu \tilde{g_\mu}Y^\mu \in \widetilde{K}[Y]$ which is non zero. 
Now $g(y) \in L^\circ$ and in $\tilde{L}$, 
\[\widetilde{g(y)} =  \sum_{\mu \in \N^t} \widetilde{g_\mu} \widetilde{y^\mu}.\]
Since by assumption the $\tilde{y_i}$'s are algebraically independent over $\widetilde{K}$,  the right 
hand side must be non zero, so $\widetilde{g(y)} \neq 0$. So $|g(y)| =1 = \|g\|$. \par 
Step 2. We now consider some element 
\[f = \sum_{\nu \in \N^s} X^{\nu}f_{\nu}(Y) \in K[X,Y], \] 
where each $f_{\nu} \in K[Y]$.
According to step 1, $|x^{\nu} f_{\nu}(y)| = r^{\nu}\|f_{\nu} \|$, then for 
$\nu \neq \mu$, with 
$f_\nu \neq 0$, $|x^{\nu} f_{\nu}(y)| \neq |x^{\mu} f_{\mu}(y)|$. 
It then follows from the triangle ultrametric inequality that 
\[|f(x,y)| = \max_{\nu \in \N^s} r^{\nu} \|f_{\nu} \|\] which is precisely 
$|f(\eta_{0,(r_1,\ldots,r_s,1,\ldots,1)} )|$.
\end{proof}

\begin{prop}
\label{proplocconst}
Let $K \hookrightarrow L$ be a non-Archimedean extension, and 
$y=(y_1,\ldots,y_n) \in L^n$. 
Let $p : L^n \to (\An_K)^\an$ as in remark \ref{remext} and 
$x:=p(y)$. Then if 
$d(\H(x) /K) = n$, there exists a neighborhood $V$ of $y$ in 
$L^n$ such that $p(V)= \{x\}$.
\end{prop}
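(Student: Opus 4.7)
The strategy is to identify $x=p(y)\in(\An_K)^{\an}$ as a Gauss point after a polynomial change of coordinates over $K$, and then derive local constancy from Lemma~\ref{lemmegauss} together with the ultrametric inequality.

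The first step is to reduce $x$ to Gauss form by a polynomial automorphism of $\mathbb{A}^n_K$. The hypothesis $d(\H(x)/K)=n$ is the equality case of the Abhyankar-type inequality for the non-Archimedean field $\H(x)$, which is generated over $K$ by the images of $y_1,\ldots,y_n$; in particular $\ker x=0$, so $|f(y)|_L>0$ for every nonzero $f\in K[X_1,\ldots,X_n]$. Using completeness of $K$ and algebraic closedness of $\widetilde{K}$, I would build a polynomial automorphism $\Phi$ of $\mathbb{A}^n_K$ over $K$ --- concretely a composition of triangular substitutions of the form $X_i\mapsto X_i-Q_i(X_1,\ldots,X_{i-1})$ with $Q_i\in K[X_1,\ldots,X_{i-1}]$, followed by a coordinate permutation --- together with an integer $0\le s\le n$, such that, setting $(z_1,\ldots,z_n):=\Phi(y)$ and $r_i:=|z_i|_L$, the values $r_1,\ldots,r_s$ are $\Q$-linearly independent in $\Q\otimes_\Z(\R_+^*/|K^*|)$, while $r_{s+1}=\cdots=r_n=1$ and $\widetilde{z_{s+1}},\ldots,\widetilde{z_n}$ are algebraically independent over $\widetilde{K}$. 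The triangular substitutions are constructed inductively on $i$: as long as $|y_i-Q_i(y)|\in|K^*|$ with normalized residue in the algebraic closure of $\widetilde{K}(\widetilde{z_1},\ldots,\widetilde{z_{i-1}})$ already built up, one absorbs the dependency by enlarging $Q_i$ with a $K$-polynomial in the previous variables so as to decrease the norm. This iteration terminates, for otherwise $y_i$ would lie in the completion of $K(y_1,\ldots,y_{i-1})$, forcing $d(\H(x)/K)<n$. An application of Lemma~\ref{lemmegauss} in the new coordinates then identifies $p(\Phi(y))$ with the Gauss point $\eta_{0,(r_1,\ldots,r_n)}$.

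For the second step, set $W:=\{w\in L^n : |w_i-z_i|_L<r_i \text{ for all }i\}$, an open neighborhood of $\Phi(y)$ in $L^n$. For $w\in W$ the ultrametric inequality gives $|w_i|_L=r_i$ for every $i$, and for $i>s$ also $\widetilde{w_i}=\widetilde{z_i}$; hence $(w_1,\ldots,w_n)$ satisfies the hypotheses of Lemma~\ref{lemmegauss} with the same data as $(z_1,\ldots,z_n)$, and the lemma yields $p(w)=\eta_{0,(r_1,\ldots,r_n)}$ for every $w\in W$. Since $\Phi$ is a polynomial automorphism over $K$, it is a homeomorphism of $L^n$ compatible via $p$ with the induced automorphism $\Phi^{\an}$ of $(\An_K)^{\an}$, so $V:=\Phi^{-1}(W)$ is an open neighborhood of $y$ in $L^n$ with $p(V)=(\Phi^{\an})^{-1}(\{\eta_{0,(r_1,\ldots,r_n)}\})=\{x\}$.

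The main obstacle is the inductive construction in the first step: each $z_i$ must make exactly one new independent contribution, and the induction has to guarantee that any $\Q$-linear relation between a new $r_i$ and the previous $r_j$'s (modulo $|K^*|$), or any algebraic dependence between a new normalized residue and the previously chosen $\widetilde{z_j}$'s (over $\widetilde{K}$), can be eliminated by further adjusting the polynomial $Q_i$ inside $K[X_1,\ldots,X_{i-1}]$. This is essentially the equality case of the Abhyankar inequality, carefully tailored to topological generation over a complete algebraically closed non-Archimedean field, and is where the completeness of $K$ and the algebraic closedness of $\widetilde{K}$ are used.
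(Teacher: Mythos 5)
Your second step is fine, but the first step --- putting $x$ into Gauss form by a \emph{polynomial} automorphism of $\mathbb{A}^n_K$ --- is impossible in general, and this is exactly the difficulty the paper's proof is built to circumvent. Take $n=2$, pick $r\in\R_+^*\setminus|K^*|$, and let $x=\eta_{0,(r,r)}$ (with $L$ the completed algebraic closure of $\H(x)$ and $y=(X_1,X_2)$ the coordinate evaluations, so $p(y)=x$). Here $|X_2/X_1|=1$ and $\widetilde{X_2/X_1}$ is transcendental over $\widetilde{K}$, so $d(\H(x)/K)=2$ with the value group contributing $1$ and the residue field contributing $1$; your normal form therefore requires a coordinate $z_2$ with $|z_2(y)|=1$ and $\widetilde{z_2(y)}$ transcendental over $\widetilde{K}$. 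But for any $Q=\sum a_{jk}X_1^jX_2^k\in K[X_1,X_2]$ one has $|Q(x)|=\max_{j,k}|a_{jk}|r^{j+k}$, and since $|K^*|$ is divisible and $r\notin|K^*|$, the only monomials whose norm can equal $1$ are the constants; hence $|Q(y)|=1$ forces the constant term to dominate strictly and $\widetilde{Q(y)}=\widetilde{a_{00}}\in\widetilde{K}$. So no polynomial --- a fortiori no component of a composition of triangular substitutions, permutations, or scalings by $K^*$ --- produces the required residual coordinate; one genuinely needs the rational function $X_2/X_1$.

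This is why the paper works with rational functions $F_i=P_i/Q_i$ and a regular map $F:\mathcal{U}\to\mathbb{A}^n_K$ that is \emph{not} injective. The price is that $(F^{an})^{-1}(\eta_{0,(r_1,\ldots,r_s,1,\ldots,1)})$ need not be a single point, and the paper must argue separately, via $d(\H(\eta)/K)=n=\dim(\mathcal{U}^{an})$ and the dimension theory of analytic spaces, that this fibre is discrete, so that $x$ is isolated in it; only then does the continuity argument (your step two, which does match the paper's) give $p(V)=\{x\}$. Your proposal skips this fibre-discreteness argument precisely because it leans on $\Phi$ being an automorphism, which is what fails. Two smaller points: your stated generators $X_i\mapsto X_i-Q_i$ do not include the scalings by $K^*$ needed to normalize norms to $1$; and if your iteration fails to terminate, $y_i$ need not lie in the completion of $K(y_1,\ldots,y_{i-1})$ --- it may determine a point of type $4$ over it --- although either way one still contradicts $d(\H(x)/K)=n$.
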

Such points $x$ of $(\An_K)^\an$ are sometimes called \textbf{Abhyankar points}\index{Abhyankar points}. 
We refer to \cite[4.1]{PoiAng} for another appearance of these points.
\begin{proof}
First note that 
$d(K(y)/K) = d(\H(x)/K)$ because $\mathcal{H}(y) \simeq \widehat{K(y)}$. 
Let $s := \text{tr.deg.}(\widetilde{K(y)} /\widetilde{K})$ 
and $t:= \dim_\Q ( \Q \otimes_\Z |K(y)^*|/|K^*| )$. \par 
Then $n=s+t$. 
Hence for each $i=1\ldots n$ one can introduce some fractions
$F_i = \frac{P_i}{Q_i} \in K(X)$ such that for all $i=1,\ldots,n$, 
\begin{align*} 
 Q_i(y) \neq 0. \\
 \left\{ \left| \frac{P_i(y)}{Q_i(y)} \right| \right\}_{i=1 \ldots t} 
 \text{ are } \Q\text{-linearly independent in} \ \Q \otimes_\Z \big( |K(y)^*|/|K^*| \big).  \\
 \text{For} \ i=t+1 \ldots n,  \ \left| \frac{P_i(y)}{Q_i(y)} \right| =1, \ \text{and} \\ 
 \left\{ \widetilde{ \left( \frac{P_i(y)}{Q_i(y)} \right) } \right\}_{i=t+1 \ldots n} \text{are algebraically 
independent over} \ \widetilde{K}.
\end{align*}
We set  $r_i:=   \left| \frac{P_i(y)}{Q_i(y)} \right|$. \par 
Let now $\mathcal{U}$ be the affine subset of $\An_K$ defined by 
$\mathcal{U} = \{z\in \An_K \st Q_i(z)\neq 0 , \ i=1\ldots n \}$.
Then 
\[F=(F_1,\ldots,F_n) : \mathcal{U} \to \An_K \] is a regular map and we obtain the following commutative diagram: 
\[
\xymatrix{
\mathcal{U}^{an} \ar[r]^{F^{an} } & \mathbb{A}^{n,an}_K \\
\mathcal{U}(L) \ar[u]^p \ar[r] & L^n  \ar[u]_p
}
\]
Then by assumption $y\in \U(L)$, and according to lemma \ref{lemmegauss}, 
\[F^{an}(p(y)) = F^{an}(x) = \eta_{0,(r_1,\ldots,r_s,1,\ldots,1)} .\]
It is a standard fact that
$d(\mathcal{H}(\eta_{0,(r_1,\ldots,r_s,1,\ldots,1)})/K)=n$. 
Hence since $\dim(\U^\an)=n$ (we mean here the dimension as a $k$-analytic space), for any 
point $u$ in the fiber 
$ (F^\an)^{-1}(\eta_{0,(r_1,\ldots,r_s,1,\ldots,1)})$, 
\[d(\mathcal{H}(u) / \mathcal{H} (\eta_{0,(r_1,\ldots,r_s,1,\ldots,1)}) )=0. \]
According to the formula \eqref{eq:dimBerko}, 
we deduce that  $ (F^\an)^{-1}(\eta_{0,(r_1,\ldots,r_s,1,\ldots,1)})$ is 
a $0$-dimensional $\H(\eta_{0,(r_1,\ldots,r_s,1,\ldots,1)})$~-analytic space. 
Hence as a topological space,  
$ (F^\an)^{-1}(\eta_{0,(r_1,\ldots,r_s,1,\ldots,1)})$ is discrete 
(thanks to the definition of the dimension of a $k$-analytic space \cite[p.34]{Berko90}).
In particular, $x$ is an isolated point in its fiber $ (F^\an)^{-1}(\eta_{0,(r_1,\ldots,r_s,1,\ldots,1)})$.\par 
By a simple continuity argument, there exists a neighborhood  $V_1$ of $y$ in 
$\U(L)$ such that  all points $v\in V_1$ satisfy exactly the same conditions than $y$ listed above. By this we mean in particular that 
\begin{align*}
\text{For} \ i=1 \ldots t, \  
\left| \frac{P_i(v)}{Q_i(v)} \right|  = \left| \frac{P_i(y)}{Q_i(y)} \right|. \\
\text{For} \ i=t+1\ldots n, \ 
\widetilde{ \frac{P_i(v)}{Q_i(v)} } =\widetilde{ \frac{P_i(y)}{Q_i(y)} }.
\end{align*} 
Hence, the same argument as for 
$y$ can be applied to $v$. Namely, for all $v\in V_1$,  
$F^{an}(p(v)) = \eta_{0,(r_1,\ldots,r_t,1,\ldots,1)}$. 
But since $p(y)=x$, $p$ is continuous, and since $x$ is isolated in its fibre $(F^{an})^{-1}( \eta_{0,(r_1,\ldots,r_t,1,\ldots,1)})$, 
there exists a neighborhood $V \subset V_1$ of $y$  such that 
$p(V)=\{x\}$.
\end{proof}

\begin{theo}
\label{theodimsub}
Let $S$ be a nonempty subanalytic set of 
$\Kon$. The following numbers are equal: 
\begin{enumerate}
\item 
The greatest $d$ for which there exists a coordinate projection  
$\pi : \K^n \to \K^d$ such that 
$\pi (S)$ has nonempty interior.
\item
The greatest $d$ for which there exists a coordinate projection 
$\pi : \K^n \to \K^d$ such that 
$\pi (S)$ is somewhere dense (that is to say its adherence has nonempty interior).
\item 
\[d = \max_{x\in S_{\Berko}} \text{tr deg}(\widetilde{\mathcal{H}(x)} /\widetilde{K}) .\]
\item The greatest $d$ such that there is a non-Archimedean extension 
$K\to L$ and a point $x\in S(L^{alg})$ such that 
tr deg$(\widetilde{K(x)}/\widetilde{K} ) = d$.
\item
\[d = \max_{x\in S_{\Berko}} d(\mathcal{H}(x) /K) .\]
\end{enumerate}
We define the dimension of $S$ to be the number $d$ which satisfies these 
equivalent properties. 
If $S$ is empty we set $\dim(S) = - \infty$. 
\end{theo}

\begin{proof} Here $(i) \Rightarrow (j)$ will mean that the integer $d_i$ defined in $(i)$ is smaller than $d_j$, the integer defined in $(j)$. \par
$(1) \Rightarrow (2)$ is clear.\par
$(2) \Rightarrow (4)$.
We can introduce
$r := |\lambda| \in |K^*|$ and 
$c\in \pi(S)$ such that 
$B(c,r)$, the closed ball of center $c$ and radius $r$ in $\Kod$, is included in 
$\overline{ \pi(S)}$. 
Let $\K \to L$ be a complete non-Archimedean extension with $L$ algebraically closed, and such that 
there exists $(y_1,\ldots,y_d) \in L^d$ such that 
$|y_i|=1$ and the
$\widetilde{y_i}$ are algebraically independent over $\widetilde{\Ka}$. 
Now, according to the uniform quantifier elimination theorem, 
we still have\footnote{If $T$ is a subanalytic set, $\overline{T}$ is also subanalytic 
because the closure can be defined by a first order formula.} 
$B(c,r)_L \subset  \overline{ \pi(S(L))}$. 
In particular, $c+ \lambda y \in \overline{ \pi(S(L))}$.
But if we replace the $y_i$'s by very close elements, we will still have that 
$|y_i|=1$ and the
$\widetilde{y_i}$ are algebraically independent over $\widetilde{\Ka}$. 
Hence we can in fact assume that $c+ \lambda y \in  \pi(S(L))$. 
Hence by definition, we can complete the $y_i$'s, with some 
$y_{d+1},\ldots,y_{n}$, such that 
$(y_1,\ldots,y_n)\in S(L)$. 
Now, 
$\td(\widetilde{(K(c+\lambda y)} / \widetilde{\Ka})=d$. 
Hence $\td(\widetilde{K(c + \lambda y)}/K)=d$ because in the following diagram 
\[
\xymatrix{
\Ka(c + \lambda y) & \Ka \ar[l] \\
K(c + \lambda y) \ar[u] & K \ar[l] \ar[u]
}
\]
the horizontal inclusions have a residual transcendental degree $0$. 
\par 
$(3) \Leftrightarrow (4)$ follows from the definition of $S_{\Berko}$.\par
$(3) \Rightarrow (5)$ is clear.\par
$(5) \Rightarrow (1)$
Let $x\in S_{\Berko}$ such that 
$d(\H(x)/K)=d$. 
Then there exists $K \to L$ a complete non-Archimedean algebraically closed extension, and 
$y\in L^n$ such that 
$p(y) = x$ where $p :L^n \to \Bn_K$ is as in remark \ref{remext}. For instance take 
$L = \widehat{\H(x)^{alg}}$. \par
Then $d(K(y)/K) = d(\H(x)/K)=d$,
hence there exists 
$1 \leq i_1 < \ldots < i_d\leq n$ such that 
$d(K(y_1, \ldots, y_n) /K)=d( K(y_{i_1},\ldots,y_{i_d})/K)=d$.
Let then 
$\pi : L^n \to L^d$ be the coordinate projection along the coordinates 
$i_k, k=1 \ldots d$ and 
let $z:=(y_{i_1},\ldots, y_{i_d} )=\pi(y) \in \pi(S)(L)$.
Then d$(K(z)/K)=d$. Then according to proposition 
\ref{proplocconst}, there exists a neighborhood 
$V$ of $z$ in $L^d$ such that 
$p(V)=\{p(z) \}$ where 
$p : L^d \to \mathbb{B}^d_K$. 
Since $z\in \pi(S)_{\Berko}$, according to lemma \ref{lemmetaut}, 
$V \subset \pi(S)(L)$. 
Hence $\pi(S)(L)$ contains $V$, hence has nonempty interior. Since having 
nonempty interior is definable with a first order formula, 
according to the 
uniform quantifier elimination theorem, 
$\pi(S)$ has also non empty interior.
\end{proof}

\begin{lemme}
The dimension of subanalytic sets is invariant by scalar extension.
By this we mean the following. 
Let $K \to L$ be an algebraically closed non-Archimedean extension, and let $S$ be a subanalytic set 
of $\Kon$, and $S(L)$ the associated subanalytic set in $(L^\circ)^n$. Then 
$\dim(S) = \dim(S(L))$.
\end{lemme}
\begin{proof}
This follows from the uniform quantifier elimination theorem and the fact 
that condition $(1)$ in the above theorem can be 
expressed by a first order formula.
\end{proof}

\begin{prop}
\label{propsubdim0}
Let $S \subset \Kon$ be a subanalytic set.
\begin{enumerate}
\item 
$ \dim(S)=0 \Leftrightarrow S$ is nonempty and finite.
\item 
$\dim(S) =n \Leftrightarrow S$ has non empty interior $\Leftrightarrow S$ is somewhere dense.
\item 
$ \dim(S) < n \Leftrightarrow S$ has empty interior $\Leftrightarrow S$ is nowhere dense.
\end{enumerate}
\end{prop}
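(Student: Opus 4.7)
My plan is to derive all three assertions from the equivalent characterizations of $\dim(S)$ given in Theorem \ref{theodimsub}. Statements (2) and (3) will follow almost immediately from these characterizations by taking the identity projection, while (1) requires one additional ingredient about one-variable subanalytic sets.

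For (2) and (3) I would observe that any coordinate projection $\pi : K^n \to K^n$ is necessarily a permutation of coordinates, hence a homeomorphism of $K^n$. Applying characterization (1) of Theorem \ref{theodimsub} at $d = n$, the condition $\dim(S) = n$ is then equivalent to $S$ itself having nonempty interior; applying characterization (2) at $d = n$, it is equivalent to $S$ being somewhere dense. Both equivalences together are the content of (2), and (3) is its contrapositive. As a byproduct, one recovers that for subanalytic sets "somewhere dense" is equivalent to "having nonempty interior".

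For (1) in the direction $(\Leftarrow)$, suppose $S = \{s_1,\ldots,s_N\} \subset \Kon$ is nonempty and finite. By Remark \ref{rem:ens_fini} we have $S(L) = S$ for every algebraically closed non-Archimedean extension $L/K$, so $S_{\Berko}$ consists of exactly the $N$ rigid points $s_1,\ldots,s_N \in \Kon \subset \B^n_K$. At each such point $\H(s_i) = K$, hence $d(\H(s_i)/K) = 0$, and characterization (5) gives $\dim(S) = 0$.

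For (1) in the direction $(\Rightarrow)$ my plan is to pass to coordinate projections. If $\dim(S) = 0$, then by characterization (1) the image $\pi_i(S) \subset K$ of every single-coordinate projection $\pi_i : K^n \to K$ has empty interior (else $\dim(S) \geq 1$). The hard part is then to argue that a subanalytic subset of $K$ with empty interior is finite; for this I would invoke the one-variable structure theory developed in \cite{LRonedim}, which describes subanalytic subsets of $K$ as finite Boolean combinations of discs, so that any such set is either finite or contains a nonempty open disc. Granted this, each $\pi_i(S)$ is finite, and therefore $S \subseteq \pi_1(S) \times \cdots \times \pi_n(S)$ is finite. The main obstacle is exactly this appeal to the one-dimensional classification; once it is in hand, the rest of the proposition is essentially a rereading of Theorem \ref{theodimsub}.
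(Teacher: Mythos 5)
Your proposal is correct and follows essentially the same route as the paper: parts (2) and (3) are read off from characterizations (1) and (2) of Theorem \ref{theodimsub} at $d=n$, and for part (1) the forward direction reduces to the fact that a subanalytic subset of $K^\circ$ with empty interior is finite (the Swiss-cheese description of one-variable subanalytic sets), so that $S\subseteq\pi_1(S)\times\cdots\times\pi_n(S)$ is finite. The only cosmetic difference is that for finite $\Rightarrow$ dimension $0$ you use characterization (5) via Berkovich points where the paper uses characterization (1); both are immediate.
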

\begin{proof}
\begin{enumerate}
\item 
If $S$ is finite, its dimension is clearly $0$ according to the characterization (1) of 
theorem \ref{theodimsub}. \par
Conversely, if $\dim(S)=0$, then for $i=1\ldots n$, let $\pi_i$ be
the coordinate projection along the $i$-th coordinate 
$\pi_i  : \Kon \to \Ko$. Then $\pi_i(S)$ has empty interior.
But $\pi_i(S)$ is a subanalytic set of $\Ko$ and they are either finite or 
have nonempty interior (according to their description 
 as Swiss cheeses). 
Hence each $\pi_i(S)$ must be finite, so $S$ itself is finite.
\item and (3) are consequences of the characterizations $(1)$ of the 
theorem \ref{theodimsub} and $(2)$ for $d=n$. 
\end{enumerate}
\end{proof}

\begin{rem}
 \label{rem:conv_dim}
 If $S\subset \Kon$ is subanalytic, and $d= \dim(S)$, then for each integer $k$ such that 
 $0 \leq k \leq d$, there exists $x\in S_\Berko$ such that 
 $d ( \H(x) / K)=k$. 
\end{rem}

\begin{prop}
\label{propdimunionsub}
If $S_i, \ i=1\ldots N$ are subanalytic sets of $\Kon$, 
\[\dim(\bigcup_{i=1}^N S_i) = \max_{i=1 \ldots N} \dim(S_i).\]
\end{prop}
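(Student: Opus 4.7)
The plan is to reduce everything to the Berkovich-point characterization of dimension provided by Theorem~\ref{theodimsub}~(5), namely
\[ \dim(S) \;=\; \max_{x \in S_\Berko} d(\mathcal{H}(x)/K). \]
Once this characterization is available, the statement becomes a formal consequence of the set-theoretic identity
\[ \Bigl(\bigcup_{i=1}^{N} S_i\Bigr)_\Berko \;=\; \bigcup_{i=1}^{N} (S_i)_\Berko, \]
since the maximum of a function over a finite union of subsets equals the maximum of the maxima on each piece.

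The first step, therefore, is to verify the displayed identity. This is essentially immediate from Definition~\ref{defi:sub_berko}: a point $x \in \mathbb{B}^n_K$ belongs to $S_\Berko$ precisely when the tuple $(x_1,\ldots,x_n)$ of evaluations of the coordinate functions in $\mathcal{H}(x)$ lies in $S(\widehat{\mathcal{H}(x)^\alg})$. If $S_i$ is defined by a quantifier-free $\Ldan$-formula $\varphi_i$, then $\bigcup_i S_i$ is defined by $\bigvee_i \varphi_i$, and the truth of $\bigvee_i \varphi_i$ at a tuple in $(\widehat{\mathcal{H}(x)^\alg})^n$ is the disjunction of the truths of the $\varphi_i$'s; using Remark~\ref{remuniform} to make sense of $S_i(\widehat{\mathcal{H}(x)^\alg})$ independently of the chosen formula, this gives exactly the equality of Berkovich avatars.

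With the identity in hand, I would conclude as follows:
\[
\dim\Bigl(\bigcup_{i=1}^N S_i\Bigr)
= \max_{x \in (\bigcup_i S_i)_\Berko} d(\mathcal{H}(x)/K)
= \max_i \; \max_{x \in (S_i)_\Berko} d(\mathcal{H}(x)/K)
= \max_{i=1\ldots N} \dim(S_i).
\]
The empty case is handled by the convention $\dim(\emptyset) = -\infty$ set in Theorem~\ref{theodimsub}.

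There is no real obstacle here: the only subtle point is that $(\cdot)_\Berko$ commutes with finite unions, and this subtlety is absorbed by the uniform quantifier elimination theorem (Theorem~\ref{theoquantel}), which guarantees that the set-theoretic operations on subanalytic sets defined over $K$ remain coherent when we pass to the algebraically closed non-Archimedean extension $\widehat{\mathcal{H}(x)^\alg}$. Once this is observed, the proposition is nothing more than a distributivity statement for $\max$ over finite unions.
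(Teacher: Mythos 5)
Your proof is correct and follows exactly the route the paper takes: its proof of this proposition is a one-line appeal to characterization (5) of Theorem~\ref{theodimsub}, and your write-up simply makes explicit the underlying facts (that $(\cdot)_\Berko$ commutes with finite unions via the definition and Remark~\ref{remuniform}, and that $\max$ distributes over finite unions) that the paper leaves implicit.
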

\begin{proof} This follows from the characterisation 
$(5)$ in theorem \ref{theodimsub} in term of 
points in Berkovich spaces.
\end{proof}
More generally, let $\{S_i\}_{i\in I}$ be an arbitrary family of subanalytic sets of 
$\Kon$ and let us assume that 
$\bigcup_{i\in I} (S_i)_\Berko$ is a subanalytic set of $\B^n_K$ then 
$\dim(\bigcup_{i\in I} (S_i)_\Berko) = \max_i \dim((S_i)_\Berko)$. 
This property is false if we remove the $_\Berko$, as can be seen if we take all 
the singletons $\{x\}$ of $\Ko$ for instance. 
\begin{prop}
\label{defS(i)sub}
Let $S$ be a subanalytic subset of $\Kov{n+m}$. 
For $x\in \Kon$, let 
\[S_x:= \{y\in \Kom \st (x,y)\in S\} 
= \pi^{-1}(x)\]
 where 
$\pi : \Kov{n+m} \to \Kon$ is the projection on the first $n$ coordinates. 
For each integer $i$, let 

\[S(i) :=  \{ x\in \Kon \st \dim(S_x)=i \} \]
\[S^{(i)} := \{ (x,y)\in S \ \text{with} \ x\in \Kon, \ y\in \Kom \st \dim(S_x)=i \} = \pi^{-1} (S(i)) .\]
Then $S(i)$ and $S^{(i)}$ are subanalytic sets. Moreover, this is compatible with 
scalar extension, that is to say, 
if $K\subset L$ is an extension of complete fields
$S(i)(L^{alg}) = S(L^{alg})(i)$ and 
$S^{(i)} (L^{alg}) = S(L^{alg})^{(i)}$. 
\end{prop}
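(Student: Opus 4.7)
The plan is to show that the condition ``$\dim(S_x) \geq i$'' can be expressed by a first-order $\Ldan$-formula $\varphi_i(x)$ in the variable $x \in \Kon$, and then conclude that $S(i)$ is subanalytic by quantifier elimination. The subanalyticity of $S^{(i)}$ will follow immediately since $S^{(i)} = S \cap \pi^{-1}(S(i))$.

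First I would use the characterization (1) of Theorem \ref{theodimsub}: a subanalytic set $T \subset \Kom$ has dimension $\geq i$ iff there exists a coordinate projection $\pi_I : \Kom \to K^{\circ I}$ onto some $i$-subset $I \subset \{1,\dots,m\}$ such that $\pi_I(T)$ has nonempty interior. There are only finitely many such $I$, so we obtain a disjunction. Next, a subanalytic set $U \subset K^{\circ I}$ has nonempty interior iff there exist $c \in U$ and $r \in \Gamma$ with $r > 0$ such that the open ball $B(c,r)$ is contained in $U$. Both ``$c \in U$ with $B(c,r) \subset U$'' and the existential quantifiers over $c$ and $r$ are expressible in $\Ldan$ using quantifiers over $K$ and $\Gamma$. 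Combining, I write
\[
\varphi_i(x) \;:\equiv\; \bigvee_{|I|=i} \exists c \in K^{\circ I},\; \exists r \in \Gamma_{>0},\; \forall y \in B(c,r),\; \exists z,\; (x,(y,z)) \in S,
\]
where $(y,z)$ is the reordering of coordinates putting the $I$-coordinates into the positions picked by $\pi_I$. This is a first-order $\Ldan$-formula in $x$ whose truth at $x \in \Kon$ is equivalent to $\dim(S_x) \geq i$.

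Then $S(i) = \{x : \varphi_i(x) \wedge \neg \varphi_{i+1}(x)\}$ is defined by a first-order $\Ldan$-formula. By Theorem \ref{theoquantel}, such a set is subanalytic. For $S^{(i)}$, note that $S^{(i)} = S \cap (S(i) \times \Kom)$, which is subanalytic as a boolean combination of subanalytic sets.

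For the compatibility with scalar extension, let $K \to L$ be a non-Archimedean algebraically closed extension. By the uniform quantifier elimination (Theorem \ref{theoquantel}) applied to $\varphi_i$, the set defined by $\varphi_i$ over $L$ is precisely $S(i)(L)$ in the sense of Remark \ref{remuniform}. On the other hand, for any $x \in (L^\circ)^n$, the set $S(L)_x$ (the fiber of $S(L)$ over $x$) is a subanalytic set of $(L^\circ)^m$ defined over some extension of $K$, and the scalar-extension invariance of dimension (the lemma immediately following Theorem \ref{theodimsub}) ensures that $\varphi_i(x)$ holds in $L$ iff $\dim(S(L)_x) \geq i$. Subtracting $\varphi_{i+1}$ yields the equality $S(i)(L) = S(L)(i)$, and applying $\pi^{-1}$ gives $S^{(i)}(L) = S(L)^{(i)}$.

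The only delicate point is the very first step: verifying that ``$\pi_I(T)$ has nonempty interior'' is genuinely first-order uniformly in the parameters defining $T$. This is unproblematic because the radius $r$ and the center $c$ quantify over the sorts $\Gamma$ and $K$ respectively, which are both available in $\Ldan$; the only care is to allow $r$ to lie in $\Gamma$ rather than merely in $|K^*|$, so that the dimension formula is insensitive to whether $K$ contains elements of norm $r$ or not, which is precisely what the scalar-extension invariance forces us to do.
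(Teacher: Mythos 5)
Your proposal is correct and follows essentially the same route as the paper: express ``$\dim(S_x)\geq i$'' as a first-order formula via characterization (1) of Theorem \ref{theodimsub} (finitely many coordinate projections, and ``nonempty interior'' phrased as the existence of a center and a radius), conclude subanalyticity of $S(i)$ and $S^{(i)}$ by quantifier elimination, and deduce compatibility with scalar extension from the uniformity of Theorem \ref{theoquantel}. Your extra care about the invariance of fiber dimension under scalar extension is a reasonable elaboration of what the paper leaves implicit, but it does not change the argument.
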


\begin{proof}
If $T$ is a subanalytic set of $\Kom$, according to the characterisation $(1)$ for the dimension 
in theorem \ref{theodimsub}, the property 
$\dim(T) \geq d$ is expressible at the first order. 
Namely, the property 
"$U$ has non empty interior" can be formulated by the formula :\\
"there exists a point a point $c\in U$, a radius $\gamma \in \Gamma$ such that 
$U$ contains the ball of center $c$ and radius $\gamma$."\\
Now $\dim(T) \geq d$ if and only if there exists a coordinate projection 
$\pi : \Kom \to \Kov{d}$ such that 
$\pi(T)$ has non empty interior, and since there are only finitely many coordinate projections, the properties 
$\dim(T)  \geq d$ and $\dim(T) =d$ are well definable with a first order formula. 
This implies that $S(i)$ and hence  $S^{(i)}$ are subanalytic. 
The fact that it behaves well if we increase the field 
$K$ is then a consequence of the uniform quantifier elimination theorem.
\end{proof}

\begin{prop}
\label{propdimfamily}
With the above notations, 
\[\dim\big(S^{(i)} \big) = \dim \big(S(i) \big)+i .\] 
As a corollary, 
\[ \dim(S) = \max_{i\geq 0} \dim(S(i)) +i .\]
\end{prop}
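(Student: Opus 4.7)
My plan rests on characterization $(5)$ of Theorem \ref{theodimsub}, $\dim(T) = \max_{t \in T_\Berko} d(\mathcal{H}(t)/K)$, combined with the additivity formula \eqref{transdeg}. The corollary $\dim(S) = \max_{i\geq 0} \dim(S(i)) + i$ will follow at once from the main equality $\dim(S^{(i)}) = \dim(S(i)) + i$, since the $S^{(i)}$ partition $S$, only finitely many are nonempty, and Proposition \ref{propdimunionsub} applies. So I focus on proving the two inequalities in $\dim(S^{(i)}) = \dim(S(i)) + i$.

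For $\dim(S^{(i)}) \leq \dim(S(i)) + i$, I pick a Berkovich point $z \in S^{(i)}_\Berko$ realising $d(\mathcal{H}(z)/K) = \dim(S^{(i)})$ and, via Lemma \ref{lemmetaut}, write $z = p(x,y)$ for some $(x,y) \in S^{(i)}(L)$ with $L = \widehat{\mathcal{H}(z)^\alg}$. The projection $\pi_\Berko : \Bn_K \to \B^n_K$ sends $z$ to a point $x_0 \in S(i)_\Berko$ and induces an isometric embedding $\mathcal{H}(x_0) \hookrightarrow \mathcal{H}(z)$, so by \eqref{transdeg},
\[ d(\mathcal{H}(z)/K) = d(\mathcal{H}(z)/\mathcal{H}(x_0)) + d(\mathcal{H}(x_0)/K) \leq d(\mathcal{H}(z)/\mathcal{H}(x_0)) + \dim(S(i)). \]
The fibre $\pi_\Berko^{-1}(x_0)$ identifies with the relative polydisc $\B^m_{\mathcal{H}(x_0)}$, and the image $y_0$ of $y \in L^m$ under the natural map $L^m \to \B^m_{\mathcal{H}(x_0)}$ lies in $(S_x)_\Berko$ (computed over $\mathcal{H}(x_0)$) with $\mathcal{H}(z) = \mathcal{H}(y_0)$. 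Since $x \in S(i)(L)$, Proposition \ref{defS(i)sub} gives $\dim(S_x) = i$, so applying characterization $(5)$ to $S_x$ yields $d(\mathcal{H}(y_0)/\mathcal{H}(x_0)) \leq i$, as required.

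For the reverse inequality I reverse the construction: pick $x_0 \in S(i)_\Berko$ with $d(\mathcal{H}(x_0)/K) = \dim(S(i))$, set $L_0 = \widehat{\mathcal{H}(x_0)^\alg}$, and realise $x_0$ by some $x \in S(i)(L_0)$. By Proposition \ref{defS(i)sub}, $\dim(S_x) = i$ as a subanalytic set over $L_0$, so characterization $(5)$ produces $y_0 \in (S_x)_\Berko$ with $d(\mathcal{H}(y_0)/L_0) = i$. Setting $L = \widehat{\mathcal{H}(y_0)^\alg}$ and choosing $y \in L^m$ lifting $y_0$, one has $(x,y) \in S^{(i)}(L)$, and the point $z := p(x,y) \in \Bn_K$ lies in $S^{(i)}_\Berko$ by Lemma \ref{lemmetaut}. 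The construction gives $\mathcal{H}(z) = \mathcal{H}(y_0)$, and since $d$ is unchanged by completion and algebraic closure,
\[ d(\mathcal{H}(z)/K) = d(\mathcal{H}(y_0)/L_0) + d(L_0/K) = i + \dim(S(i)). \]

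The technical point I expect to have to justify most carefully is the identification of the fibre $\pi_\Berko^{-1}(x_0)$ with $\B^m_{\mathcal{H}(x_0)}$ together with the ensuing equality $\mathcal{H}(z) = \mathcal{H}(y_0)$. Concretely, one must check that $(S_x)_\Berko$ taken inside $\B^m_{\mathcal{H}(x_0)}$ really coincides with the fibre of $S_\Berko$ above $x_0$, which amounts to showing compatibility between the seminorms defining Berkovich points in the relative disc and those defining $z$ from $(x,y) \in L^{n+m}$. Once this fibre picture is in place, both inequalities reduce to a single application of \eqref{transdeg} and Theorem \ref{theodimsub}$(5)$.
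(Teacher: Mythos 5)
Your proof is correct and is in substance the paper's own: both inequalities rest on the additivity formula \eqref{transdeg} applied to the tower $K \subset \widehat{K(x)} \subset \widehat{K(x,y)}$ together with the degree characterizations of Theorem \ref{theodimsub}, and your $\H(x_0)$ is exactly the paper's intermediate field $M = \widehat{K(x)}$. The only difference is packaging: the paper works directly with rational points $(x,y)\in S^{(i)}(L)$ over extensions, which lets it bypass entirely the fibre identification $\pi_\Berko^{-1}(x_0)\cong \B^m_{\H(x_0)}$ that you flag as the delicate step; note also that in your second inequality $\H(z)=\widehat{K(x,y)}$ is in general a proper subfield of $\H(y_0)=\widehat{L_0(y)}$, and what you actually use is only $d(\H(y_0)/\H(z))=0$, which is the same diagram argument the paper invokes.
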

\begin{proof}
Let us prove first that 
$\dim(S^{(i)}) \leq \dim(S(i))+i$. To do that we use the characterization 
(5) of theorem \ref{theodimsub}.
Let $K \subset L$ be an algebraically closed non-Archimedean extension,  
$(x,y) \in S^{(i)} (L)$. 
Then $x\in S(i)(L)$ hence 
\begin{equation}
\label{ineg1sub}
d(K(x)/K) \leq \dim (S(i)) 
\end{equation}
Let $M:=\widehat{ K(x) } \subset \widehat{L}$ seen as sub-valued field of $L$. 
Then $y\in S_x(L^{alg})$ hence by assumption 
\[d(M(y)/M) \leq i = \dim(S_x). \]
Now in the following diagram
\[
\xymatrix{
M(x,y) = M(y) & K(x,y) \ar[l] \\
M \ar[u] & K(x) \ar[l] \ar[u]
}
\]
the two horizontal field inclusions satisfy $d(\cdot / \cdot)=0$, hence 
\begin{equation}
\label{ineg2sub}
 d(K(x,y)/K(x) )= d (M(y)/M)  \leq i .
\end{equation}
Hence the inequalities \eqref{ineg1sub} and \eqref{ineg2sub} imply that 
\[ d (K(x,y)/K) = d(K(x,y)/K(x) ) + d(K(x)/K) \leq   \dim (S(i)) +i \] 
hence 
$\dim (S^{(i)}) \leq \dim (S(i))+i$. \par
Let us prove conversely that 
$\dim (S(i)) +i \leq \dim (S ^{(i)})$.
Let 
$K \to L$ be an algebraically closed  non-Archimedean extension and 
$x\in S(i)(L)$ such that
\begin{equation}
\label{in1}
 d(K(x)/K) = \dim (S(i)). 
\end{equation} 
We can assume that 
$L = \widehat{K(x)^{alg}}$.\\
Since  
$\dim (S_x)=i$, there exists 
$L \to M$ an algebraically closed non-Archimedean extension, and
$y \in S_x(M)$ such that 
$d(L(y)/L) =i$. 
Then $(x,y) \in S^{(i)}(M)$. 
For the same reason as above, 
\begin{equation}
\label{in2}
 d(K(x,y)/K(x)) = d(L(y)/L)=i
 \end{equation} 
 because $L=\widehat{K(x)^{alg} }$. 
Hence with \eqref{in1} and \eqref{in2}:
\[\dim (S^{(i)}) \geq d(K(x,y)/K) = d(K(x,y)/K(y) + d(K(x)/K) = \dim (S(i)) +i. \]
The corollary now follows from the fact that 
$S = \cup_i S^{(i)}$ and proposition \ref{propdimunionsub}.
\end{proof}

\begin{prop}
\label{propproduit}
If $S$ (resp. $T$) is a subanalytic set of $\Kon$ (resp. $\Kom$), then 
$S\times T$ is a subanalytic set of $\Kov{n+m}$ and 
\[\dim(S\times T) = \dim(S) +\dim(T).\]
\end{prop}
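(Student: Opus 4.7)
The plan is to obtain the equality as a direct corollary of the fiber formula proved just above, namely Proposition~\ref{propdimfamily}. First I would check that $S \times T$ is itself subanalytic: if $\varphi(x)$ and $\psi(y)$ are quantifier-free $\Ldan$-formulas defining $S$ and $T$ respectively, then $\varphi(x) \wedge \psi(y)$ is a quantifier-free $\Ldan$-formula defining $S \times T$ inside $\Kov{n+m}$. I would also dispose of the trivial case where $S$ or $T$ is empty, in which $S \times T = \emptyset$ and both sides equal $-\infty$ by convention.

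Next, set $W := S \times T \subset \Kov{n+m}$ and view it as a family over $K^n$ via the projection $\pi$ onto the first $n$ coordinates. By inspection the fiber $W_x$ equals $T$ if $x \in S$ and equals $\emptyset$ otherwise. In the notation of Proposition~\ref{defS(i)sub} and Proposition~\ref{propdimfamily}, this yields
\[
W(d_2) = S, \qquad W(i) = \emptyset \ \text{for } i \neq d_2,
\]
where $d_2 := \dim(T)$. Applying the corollary of Proposition~\ref{propdimfamily}, one obtains immediately
\[
\dim(S \times T) \;=\; \max_{i \geq 0} \bigl(\dim(W(i)) + i\bigr) \;=\; \dim(S) + d_2 \;=\; \dim(S) + \dim(T).
\]

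There is essentially no obstacle beyond the packaging above, since the real work has already been done in Proposition~\ref{propdimfamily}. As a sanity check, one could instead run a direct Berkovich-theoretic argument via characterization (5) of Theorem~\ref{theodimsub}: the upper bound would use the additivity $d(M/K) = d(M/L) + d(L/K)$ applied to the tower $K \subset \widehat{\H(x)} \subset \widehat{\H(z)^{\alg}}$, where $z \in (S\times T)_\Berko$ and $x$ is its image under the first projection (which is subanalytic, so $d(\H(x)/K) \leq d(\H(z)/K)$ by Lemma~\ref{lemmeimagelesspoint}); the lower bound would require constructing a ``generic pair'' in some large enough algebraically closed extension $L$ where the transcendence and value-group contributions of a lift of $x$ and a lift of $y$ remain independent. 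This second route is technically more involved, which is why the one-line deduction from the fiber formula is the right approach here.
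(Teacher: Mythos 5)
Your argument is correct, but it takes a genuinely different route from the paper. The paper disposes of this proposition in one line by invoking characterization (1) of Theorem \ref{theodimsub}: a coordinate projection $\pi$ of $\Kov{n+m}$ splits as a pair $(\pi_1,\pi_2)$ of coordinate projections of the two factors, one has $\pi(S\times T)=\pi_1(S)\times \pi_2(T)$, and a product of two sets has nonempty interior if and only if both factors do; this gives both inequalities directly from the definition of the dimension, with no machinery. You instead derive the product formula from the fiber formula of Proposition \ref{propdimfamily}, observing that the fibers of $S\times T$ over $S$ are all equal to $T$, so that in the notation of Proposition \ref{defS(i)sub} one has $W(\dim T)=S$ and $W(i)=\emptyset$ otherwise, whence the corollary of \ref{propdimfamily} yields $\dim(S\times T)=\dim(S)+\dim(T)$ at once. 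This is a clean and valid deduction (the empty fibers over $x\notin S$ contribute $\dim=-\infty$ and are harmless in the max, and your treatment of the case $S=\emptyset$ or $T=\emptyset$ is fine), but it routes the proof through a strictly stronger result whose own proof relies on the Berkovich characterization (5) and the additivity of $d(\cdot/\cdot)$; the paper's choice of characterization (1) keeps the product formula at the elementary level where it naturally lives. Your sketched Berkovich-theoretic alternative is essentially the argument hidden inside \ref{propdimfamily}, so mentioning it is consistent, though as you say it is more work than either of the other two routes.
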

\begin{proof} 
This follows directly from the characterization (1) of the dimension in theorem \ref{theodimsub}.
\end{proof}

\begin{prop}
\label{propinvar}
Let $f:X \to Y$ be a subanalytic map of subanalytic sets of $\Kon$ and $\Kom$. 
\begin{enumerate}
\item 
$\dim (f(X)) \leq \dim(X)$.
\item If $f$ is injective, $\dim (f(X)) = \dim (X)$.
\item If $f$ is bijective $\dim(X)=\dim(Y)$.
\end{enumerate}
\end{prop}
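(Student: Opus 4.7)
The strategy is to leverage the Berkovich characterization of dimension established in Theorem \ref{theodimsub}(5), namely
\[ \dim(S) = \max_{x\in S_\Berko} d(\mathcal{H}(x)/K), \]
together with the decreasing behavior of residual degrees under a subanalytic map given by Lemma \ref{lemmeimagelesspoint}. The remark preceding Section \ref{sec:gen_prop} already produces for a subanalytic map $f:X\to Y$ a natural map $f_\Berko: X_\Berko \to Y_\Berko$, inherited from the bijection $\Graph(f)_\Berko \simeq X_\Berko$ and the projection $\Graph(f)_\Berko \to Y_\Berko$. The key observation is that the projection $\Graph(f) \to f(X)$ is surjective, so by the surjectivity statement in point (2) of the compatibility discussion the induced map $\Graph(f)_\Berko \to f(X)_\Berko$ is surjective; hence $f_\Berko(X_\Berko) = f(X)_\Berko$.

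For (1), pick any $y \in f(X)_\Berko$ and lift it via the surjection to some $x \in X_\Berko$ with $f_\Berko(x) = y$. Lemma \ref{lemmeimagelesspoint} yields $d(\mathcal{H}(y)/K) \leq d(\mathcal{H}(x)/K) \leq \dim(X)$. Taking the maximum over $y \in f(X)_\Berko$ and applying the Berkovich characterization gives $\dim(f(X)) \leq \dim(X)$.

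For (2), the idea is to apply (1) to the inverse map. Since $\Graph(f^{-1})$ is just $\Graph(f)$ with the two factors permuted, it is still a subanalytic subset of $\Kon \times \Kom$ (possibly after renaming), and when $f$ is injective it is the graph of a well-defined function $f^{-1}: f(X) \to X$, which is therefore a subanalytic map. Applying (1) to $f^{-1}$ gives $\dim(X) = \dim(f^{-1}(f(X))) \leq \dim(f(X))$, and combined with the inequality of (1) this yields equality.

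Assertion (3) is then immediate: if $f$ is bijective then $f(X) = Y$ and $f$ is injective, so by (2) we get $\dim(X) = \dim(f(X)) = \dim(Y)$. The only real content is in (1); the main technical point to check is that $f_\Berko$ lands surjectively onto $f(X)_\Berko$, but this is already contained in the compatibility discussion for subanalytic projections. No delicate argument is required beyond invoking Lemma \ref{lemmeimagelesspoint} and noting the symmetry of graphs for inverse maps.
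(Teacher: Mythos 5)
Your proposal is correct and follows essentially the same route as the paper: the paper's proof simply states that (2) and (3) reduce to (1), and that (1) follows from Lemma \ref{lemmeimagelesspoint} combined with characterization (5) of Theorem \ref{theodimsub}. You merely spell out the details the paper leaves implicit, namely the surjectivity of $f_\Berko$ onto $f(X)_\Berko$ (which is indeed covered by the compatibility discussion in Section \ref{sec:sub_ber}) and the use of the inverse graph for the injective case.
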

\begin{proof}Since 2. and 3. are consequences of 1. so we only have  to prove 1. 
But 1. follows directly from lemma \ref{lemmeimagelesspoint} and the characterisation 5. of the dimension in theorem \ref{theodimsub}. 
\end{proof}  

\begin{rem}
 More generally, if $f: X \to Y$ is a subanalytic map between subanalytic sets, then 
 \[ \dim(S) = \sup_{y\in Y} ( d(\H(y) /K) + \dim_{\H(y)} ( f_{\Berko}^{-1}(y) )).\]
\end{rem}
\begin{rem}
\label{remdimsubqa}
More generally, we can define the dimension of subanalytic sets of a good strictly $k$-affinoid space. 
The invariance of dimension by subanalytic bijection (proposition \ref{propinvar} (3))  
is necessary to prove that this definition works well. This theory of dimension then satisfies all the properties mentioned above.
\end{rem}
 
\subsection{Closure and boundary}
\label{sec:closure}

\begin{lemme}
\label{lemme:adhA}
Let $S \subset (K^\circ)^n$ be a subanalytic set of dimension $d$, and let 
$x\in S_\Berko$ such that $d(\H(x)/K)=d$.
Let $K \to L$ be an algebraically closed non-Archimedean extension and 
$p:L^n \to \Anan$ the canonical map. 
Then $p^{-1}(x)$ is an open subset of $S(L)$.
\end{lemme}
Remark that this set might be empty.
\begin{proof}
Let $u\in L^n$ be a preimage of $x$ by $p$.
We can then consider a coordinate projection 
$\pi : \Anan \to (\mathbb{A}_k^d)^\an$ such that 
$y:= \pi(x)$ is an Abhyankar point.
Then $\pi^{-1}(y) \cap S_\Berko$ can be identified with a subanalytic set of 
$\B^n_{\H(x)}$ of dimension $0$, hence is finite.
Let us say that $\pi^{-1}(y)\cap S_\Berko = \{x,x_1,\ldots,x_m\}$.\par 
In addition, according to proposition \ref{proplocconst}, $p^{-1}(y)$ is open, and so 
$\pi_L^{-1} (p_i^{-1}(y))$ is open where 
\[ 
\xymatrix{
L^n \ar[r]^p \ar[d]^{\pi_L} & \B^n \ar[d]^{\pi} \\
L^i \ar[r]^{p_i} & \B^i
}\]
Then 
\[ S(L) \cap ( \pi_L^{-1}( p_i^{-1}(y)) = 
S(L) \cap (p^{-1}(x) \cup p^{-1}(x_1) \ldots \cup p^{-1}(x_n) ) \]
is  open in $X(L)$.
But since $p^{-1}(x_i)$ is closed for all $i$ (just because affinoid spaces are Hausdorff 
for the Berkovich topology), it follows that 
$X(L) \cap p^{-1}(x)$ is open in $X(L)$.
\end{proof}

\begin{lemme}
\label{lemme:semi-cont}
Let $K \to L$ be an extension. The map 
\[\begin{array}{ccc}
L^n & \to & \N \\
x & \mapsto & d(K(x) / K) 
\end{array}
\]
is lower semi-continuous. 
In other words for $d\in \N$, 
the sets $\{x\in L^n \st d(K(x) /K) \geq d\}$ are open. 
In other words again, for $d\in \N$, 
the sets $\{x\in L^n \st d(K(x) /K) \leq d\}$ are closed.  
\end{lemme}
\begin{proof}
First we remark that $d(K(x) / K) \geq d$ if and only if for some coordinate projection 
$p : L^n \to L^d$, 
$d(K(p(x))/K) =d$. \par 
From this it follows that it is enough to show that 
$\{x\in L^n \st d(K(x) /K) \geq n\}$ is open. 
But this has already been proved during the proof of prop \ref{proplocconst}.
\end{proof}

\begin{prop}
Let $S \subset (K^\circ)^n$ be a subanalytic set. 
Then $\overline{S}$ is also subanalytic and 
$\dim(\overline{S} \setminus S) < \dim(S)$.
\end{prop}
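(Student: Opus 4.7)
My plan is to split the statement into three parts. First, $\overline{S}$ is subanalytic because the topological closure is a first-order $\Ldan$-definable operation: one expresses $x\in\overline{S}$ by ``$\forall\gamma\in\Gamma\ \exists y\in S\ |x-y|<\gamma$'' and invokes the uniform quantifier elimination theorem \ref{theoquantel}. For the non-strict inequality $\dim(\overline{S})\leq\dim(S)$ (which combined with the trivial $\dim(S)\leq\dim(\overline{S})$ gives equality), I would use characterization (2) of Theorem \ref{theodimsub}: if a coordinate projection $\pi$ makes $\pi(\overline{S})$ somewhere dense, then by continuity $\overline{\pi(\overline{S})}\subseteq\overline{\pi(S)}$, so $\pi(S)$ is also somewhere dense.

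The heart of the argument is the strict inequality $\dim(\overline{S}\setminus S)<d:=\dim(S)$. Assume for contradiction that equality holds. By characterization (5) of Theorem \ref{theodimsub}, there exist an algebraically closed non-Archimedean extension $K\hookrightarrow L$ and a point $\underline{x}_0\in(\overline{S}\setminus S)(L)$ with $d(K(\underline{x}_0)/K)=d$; set $x_0:=p(\underline{x}_0)\in\B^n_K$. Since $\underline{x}_0\in\overline{S(L)}$, pick a sequence $y_n\in S(L)$ converging to $\underline{x}_0$. Lemma \ref{lemme:semi-cont} tells us the set $\{y\in L^n \st d(K(y)/K)\geq d\}$ is open; it contains $\underline{x}_0$, hence $y_n$ for $n$ large. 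Combined with the ceiling $d(K(y_n)/K)\leq\dim(S)=d$ coming from $y_n\in S(L)$, this gives $d(K(y_n)/K)=d$ eventually.

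Now I would adapt the localization technique from the proof of Lemma \ref{lemme:adhA}. Choose a coordinate projection $\pi:L^n\to L^d$ such that $d(K(\pi(\underline{x}_0))/K)=d$; then $p_d(\pi(\underline{x}_0))$ is an Abhyankar point, and Proposition \ref{proplocconst} produces an open neighborhood $W\ni\underline{x}_0$ in $L^n$ on which $\pi\circ p$ is constantly equal to $\pi(x_0)$. The Berkovich fiber $\pi^{-1}(\pi(x_0))\cap\overline{S}_{\Berko}$ is finite (by the argument at the start of the proof of Lemma \ref{lemme:adhA}), so removing the $p$-preimages of its finitely many points other than $x_0$ yields an open $W_0\ni\underline{x}_0$ with $W_0\cap\overline{S(L)}\subseteq p^{-1}(x_0)$. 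Since $y_n\to\underline{x}_0$ and each $y_n\in S(L)\subseteq\overline{S(L)}$, eventually $y_n\in W_0$, and hence $p(y_n)=x_0$; in particular $x_0\in S_{\Berko}$.

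The knockout blow is Lemma \ref{lemmetaut}: $S(L)=p^{-1}(S_{\Berko})$, so whether a point of $L^n$ lies in $S(L)$ depends only on its Berkovich image. Therefore $x_0\in S_{\Berko}$ forces $p^{-1}(x_0)\subseteq S(L)$; in particular $\underline{x}_0\in S(L)$, contradicting $\underline{x}_0\in(\overline{S}\setminus S)(L)$. The main obstacle is upgrading ``$p(y_n)\to x_0$'' to ``$p(y_n)=x_0$ eventually''; this is achieved by combining Abhyankar localization (which reduces the relevant Berkovich fiber to a finite set) with the Hausdorffness of $\B^n_K$ (which forces a convergent sequence in a discrete finite set to be eventually constant). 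Once this localization step is secured, the tautological Lemma \ref{lemmetaut} delivers the contradiction cleanly.
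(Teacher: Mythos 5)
Your proof is correct and follows essentially the same route as the paper's: the paper obtains $\dim(\overline{S})\leq\dim(S)$ from Lemma \ref{lemme:semi-cont} and then applies Lemma \ref{lemme:adhA} directly to $\overline{S}$, producing a nonempty open subset $p^{-1}(x)$ of $\overline{S}(L)$ contained in $\overline{S}(L)\setminus S(L)$, i.e.\ disjoint from the dense subset $S(L)$ --- exactly the contradiction you reconstruct by redoing the Abhyankar localization by hand. The only minor remarks are that your alternative argument for $\dim(\overline{S})\leq\dim(S)$ via characterization (2) of Theorem \ref{theodimsub} is equally valid, and that your paragraph establishing $d(K(y_n)/K)=d$ for large $n$ is never actually used in the rest of the argument.
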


\begin{proof}
First, $\overline{S}$ is still subanalytic because 
the closure of a set in $\Kon$ can be expressed by a first order formula:
\[
\overline{S} = \{x \in \Kon \st \forall \varepsilon \in \Gamma \ \exists s \in S \
\text{such that} \ |x-s|\leq \varepsilon \}.\]
Step 1: First using lemma \ref{lemme:semi-cont}, we obtain that $\dim(\overline{S}) \leq \dim(S)$. \par 
Step 2: Let us assume that $\dim(\overline{S}\setminus S) = \dim(S) =d$. 
Let then $x\in \overline{S}_\Berko \setminus S_\Berko$ such that 
$d(\H(x) /K) =d$.
Let then $K \to L$ be some algebraically closed non-Archimedean extension such that 
$x\in p(L^n)$.
Then according to lemma \ref{lemme:adhA}, 
$p^{-1}(x)$ is an open subset of 
$\overline{S}(L)$ which is contained in 
$\overline{S}(L) \setminus S(L)$ which is absurd.
\end{proof}

\subsection{Bad functions in $C$-minimal structures}
\label{sec:Cminimal}
In \cite[sect.6, p.154]{HaMaCell} it is noted that when a complete first order theory satisfies the exchange property, 
then the rank is a \emph{good} notion of dimension for definable subsets.
Afterwards, the notion of a bad function is introduced. 
In our context (namely ACFV, or the analytic theory associated for a complete non-Archimedean field), 
this is a definable function 
\[f : K \to \{ \text{closed  balls  of strictly positive radius} \} \]
such that for some nonempty open ball $C \subset K$, $f_{|C}$ induces an isomorphism of 
quasi C-structures (see \cite[2.2]{HaMaCell}) with its image. \par 
This implies in particular that 
if $b\neq b'$ are elements of $C$, $f(b)$ and $f(b')$ are disjoint balls. 
Then if we set $X := \cup_{b\in C} f(b)$,  this is a definable set of $K$,  
and for each $x\in X$ there is a unique $b\in C$ such that 
$x\in f(b)$.
We then define $g: X \to C$ by $g(x) =b$.
This is a definable function, with infinite range and infinite fibers.
So according to proposition \ref{propdimfamily}, we should have $\dim(X)=2$, but this is impossible 
because $X \subset K$.
From this we conclude 
\begin{prop}
Let $T$ be the C-minimal structure associated with the analytic structure of a complete non-Archimedean field, or with ACVF. Then $T$ does not have bad functions.
\end{prop}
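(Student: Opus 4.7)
The plan is to argue by contradiction, making rigorous the sketch given in the paragraph preceding the statement by deploying the dimension theory developed in this section.

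Suppose $f : K \to \{\text{closed balls of strictly positive radius}\}$ is a bad function, with witnessing nonempty open ball $C \subset K$ on which $f$ is injective and sends distinct points to disjoint balls. First I would encode $f$ as the definable set
\[ S := \{(b,x) \in C \times K \st x \in f(b)\} \subset K^2, \]
which is definable because $f$ is, once balls are coded by (center, radius) pairs. For each $b \in C$, the fiber $S_b = f(b)$ is a closed ball of positive radius in $K$, hence has dimension $1$. In the notation of Proposition \ref{defS(i)sub} this means $S(1) = C$ and $S = S^{(1)}$, so Proposition \ref{propdimfamily} yields
\[ \dim(S) = \dim(S(1)) + 1 = \dim(C) + 1 = 2. \]

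Next I would push $S$ down to $K$ via the second projection $\pi : S \to K$, whose image is $X := \bigcup_{b \in C} f(b)$. Here the disjointness of the balls $\{f(b)\}_{b \in C}$ is crucial: it guarantees that each $x \in X$ has a unique preimage $b \in C$, so $\pi$ is a definable bijection from $S$ onto $X$. Proposition \ref{propinvar}(3) (respectively its analogue in ACVF) then gives $\dim(X) = \dim(S) = 2$. But $X \subset K$ is definable, and any definable subset of $K$ has dimension at most $1$ by the coordinate-projection characterisation of Theorem \ref{theodimsub}(1) (resp.\ Theorem \ref{theodimK}(5)), which is the desired contradiction.

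There is no real obstacle here beyond unwinding definitions: the only point that deserves care is the definability of the set $S$, which reduces to the hypothesis that $f$ is a definable map into the sort of closed balls and to a fixed definable coding of balls by pairs (center, radius). Everything else is an application of the two central properties of the dimension — additivity along fibers (Proposition \ref{propdimfamily}) and invariance under definable bijection (Proposition \ref{propinvar}) — together with the basic bound $\dim(Y) \leq 1$ for $Y \subset K$.
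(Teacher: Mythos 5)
Your argument is correct and is essentially the paper's own proof: the paper introduces the (inverse) map $g:X\to C$ sending $x$ to the unique $b$ with $x\in f(b)$ and invokes Proposition \ref{propdimfamily} to force $\dim(X)=2$, contradicting $X\subset K$, which is exactly your computation on the set $S=\{(b,x)\st x\in f(b)\}$ followed by the bijective projection onto $X$. Your write-up merely makes explicit the two steps the paper leaves implicit (definability of $S$ via a coding of balls, and the appeal to invariance under definable bijection), so there is nothing to add.
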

In \cite{HaMaCell} it is then proved that a $C$-minimal structure has the exchange property if and only if it has no bad function  \cite[prop. 6.1]{HaMaCell} and 
that  if it has no bad function, then the dimension as we have defined it in theorem  \ref{theodimsub} (1) coincides with the rank \cite[prop. 6.3]{HaMaCell}.

\section{Mixed dimension of definable subsets of $K^m \times \Gamma^n$}
\label{sec2}
In this section we will consider two settings: 
$K$ will be either 
\begin{itemize}
 \item an algebraically closed valued field 
\item or an 
algebraically closed non-Archimedean field.
\end{itemize}
Depending on these contexts, a definable set of $\KmGn$ will 
mean either
\begin{itemize}
 \item  a definable set of $\KmGn$ in the language  $\mathcal{L}_1$ considered in section \ref{subsubsec:ACVF} 
 \item or a subanalytic set of 
 $\Kom \times \Gamma^n$ defined in the language $\mathcal{L}_1^D$ considered in section \ref{subsubsec:an}.
\end{itemize}
We are going to define what relative cells $C \subset \KmGn$ are\footnote{Remind that we should rather consider $K^m \times (\Gamma_0)^n$ instead, 
but following the remark made in section \ref{subsec:warning}, we will ignore $0$.}. 
They will be the data 
of a definable subset $X \subset K^m$, $\icell$ a sequence of $0$'s and $1$'s, 
and $C$ will be 
a nice family of $\icell$'s parametrized by $X$. \par 
We will give a uniform treatment 
of the cell decomposition \ref{theomixedcelldec}, and of the mixed 
dimension \ref{theodimmixed} that we will build for definable 
subsets of $\KmGn$, so the proof for 
the ACVF and the analytic settings will be the same.
This is rendered 
possible by the analogous properties of the 
dimension of subsets of 
$K^n$ in the ACVF setting (see section \ref{subsection:dimACVF}) and 
of the dimension of subanalytic set of $\Kon$ in the subanalytic setting (see section \ref{sec1.2}). 
As for the definable subsets of $\Gamma^n$, 
they are the same in both theories (and have been described in section \ref{sec1.4}).

\subsection{Mixed cell decomposition theorem}
\label{sec2.1}

\begin{defi}
\label{deficontindefi}
Let $X \subset K^m \times \Gamma^n$ be definable set. We will set:
\[
 \mathcal{C}^\circ(X) := \{f : X \to \Gamma \st f \ \text{is continuous and definable} \} 
\cup \{\pm \infty \} .\] 
\end{defi}
We will need the following proposition.

\begin{defi}
\label{deficell}
Let $X \subset K^m$ be a nonempty definable subset. 
For $\icell \in \{0,1\}^n$ we define inductively  
\textbf{$X$-$\icell$-cells}, which are definable subsets of 
$X \times \Gamma^n$: 
\begin{enumerate}[i)]
\item if $n = 0$, then $X$is the only 
$X-\emptyset$-cell.
\item If 
$C \subset X \times (\Gamma)^n$ is an 
$X-\icell$-cell, and $f,g \in \mathcal{C}^\circ(X)$, such that 
for all $x \in C, \ f(x)<g(x)$, then 
\begin{align*}
\Graph(f) := \{ (x,\gamma)\in C \times \Gamma \st \gamma =f(x) \} \ \text{is an} \ 
X-(i_1,\ldots,i_n,0)\text{-cell}.   \\
]f,g[ := \{ (x,\gamma)\in C \times \Gamma \st f(x)<\gamma <g(x) \} \ \text{is an} \ 
X-(i_1,\ldots,i_n,1)\text{-cell}. 
\end{align*}

\end{enumerate}
\end{defi}

\begin{rem}
\label{remcell}
If $X \subset K^m$ is a definable set, 
$C$ is an 
$X-\icell$-cell, then for all 
$x\in X$, if we set 
\[C_x := \{ \gamma \in \Gamma^n \st (x,\gamma)\in C \} \]
then $C_x \subset \Gamma^n$ is an 
$\icell$-cell in the sense of the definition \ref{deficellGamma}, 
and one has to think of $C$ as a definable continuous family 
of $\icell$-cells parametrized by $X$.
Remark that if 
$(i_1,\ldots,i_n) \neq (j_1,\ldots,j_n)$, then a 
$X-(i_1,\ldots,i_n)$-cell and a 
$X-(j_1,\ldots,j_n)$-cell are necessarily different. This is true because anyway 
a $(i_1,\ldots,i_n)$-cell and a $(j_1,\ldots,j_n)$-cell are different. 
\end{rem}

\begin{defi}
\label{defdimcell}
Let $X\subset K^m$ be definable and $C\subset X\times \Gamma^n$ a 
$X-(i_1,\ldots,i_n)$-cell. Let 
$d_1 = \dim(X)$ and $d_2 = i_1+ \ldots+i_n$. 
\index{dimension!of an $X$-$(i_1,\ldots,i_n)$-cell}
We say that $C$ is a cell of dimension $(d_1,d_2) \in \N^2$.
\end{defi}

We make this first observation:
\begin{prop}
\label{propopencell}
Let $C \subset \KmGn$ be a cell. Then 
$\dim(C) = (m,n) \Leftrightarrow C$ has non-empty interior. 
In other words, $\dim(C) \neq (m,n) \Leftrightarrow C$ has empty interior.
\end{prop}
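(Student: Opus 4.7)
The plan is to reduce the statement to two observations: (a) a cell $C$ of the form $X$-$(i_1,\ldots,i_n)$ has $\dim(C)=(m,n)$ exactly when $\dim(X)=m$ and all $i_j=1$, and (b) in the product topology on $K^m\times\Gamma^n$, open sets project openly onto each factor and graphs of functions have empty interior.

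For the direction $(\Leftarrow)$, suppose $C$ has non-empty interior. The projection $\pi:K^m\times\Gamma^n\to K^m$ is open, and $\pi(C)\subset X$, so $X$ contains an open subset of $K^m$. By the dimension characterizations in sections \ref{sec1.2} (subanalytic case) and \ref{subsection:dimACVF} (ACVF case), this forces $\dim(X)=m$. To force every $i_j=1$, I would argue contrapositively: if $i_k=0$ for some $k$, then by construction $C$ is contained in the set defined by $\gamma_k=f(x,\gamma_1,\ldots,\gamma_{k-1})$ for a (continuous) definable $f$; any open neighborhood in $K^m\times\Gamma^n$ of a point $(x_0,\gamma_0)$ must project onto an interval of $\gamma_k$-values around $\gamma_0^{(k)}$, whereas the graph contains at most one $\gamma_k$ per choice of the other coordinates, yielding a contradiction. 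Thus all $i_j=1$ and $\dim(C)=(m,n)$.

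For the direction $(\Rightarrow)$, suppose $\dim(C)=(m,n)$. Then $\dim(X)=m$, so $X$ has non-empty interior in $K^m$ by the same dimension characterizations, and $i_1=\cdots=i_n=1$. I proceed by induction on $n$. The base case $n=0$ is immediate: $C=X$ has non-empty interior. For the inductive step, write $C=\,]f,g[\,$ over an $X$-$(1,\ldots,1)$-cell $C'$ with $f,g\in\mathcal{C}^\circ(C')$ continuous and $f<g$ on $C'$. By induction $C'$ has non-empty interior; pick $(x_0,\gamma_0)$ in that interior and pick $\gamma_n\in\,]f(x_0,\gamma_0),g(x_0,\gamma_0)[\,$. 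By continuity of $f$ and $g$ (with the convention that $\pm\infty$ pose no obstruction) there exist an open neighborhood $W$ of $(x_0,\gamma_0)$ in $C'$ and an open interval $I\ni\gamma_n$ such that $f(x,\gamma)<\gamma'<g(x,\gamma)$ for all $(x,\gamma)\in W$ and $\gamma'\in I$. Then $W\times I\subset C$, giving non-empty interior.

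The main obstacle is purely the bookkeeping at the inductive step of the $(\Leftarrow)$ direction: one must be careful that the ``graph $\Rightarrow$ empty interior'' argument applies cleanly regardless of where in the cell-construction the 0-coordinate first appears, and that the openness-of-projection and continuity-of-defining-functions arguments interact correctly with the product topology on $K^m\times\Gamma^n$ (especially in light of the warning in section \ref{subsec:warning} about $\Gamma$ versus $\Gamma_0$). Everything else reduces to the previously established dimension theory of $X\subset K^m$ and standard continuity arguments.
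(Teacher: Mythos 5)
Your proof is correct and follows essentially the same route as the paper: the direction $\dim(C)=(m,n)\Rightarrow$ non-empty interior is done by reducing to $X$ open (via the projection characterization of dimension) and then an induction on $n$ using continuity of the defining functions $f<g$. The converse, which the paper dismisses as ``clear,'' is spelled out correctly in your argument (open projections force $\dim(X)=m$, and a graph condition $\gamma_k=f(x,\gamma_1,\ldots,\gamma_{k-1})$ kills any open box since $\Gamma$ is densely ordered).
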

\begin{proof}
We first prove that 
$\Big( \dim(C) = (m,n) \Big) \Rightarrow C $ has non-empty interior. 
So let us consider $C$  an $X-(1,\ldots,1)$-cell with $\dim(X)=m$. 
Then according to proposition \ref{propsubdim0} (2), 
$X$ has non-empty interior, hence we can assume that $X$ is open. 
Then, using the definition of a cell, one proves by induction on $n$ that an 
$X-(1,\ldots,1)$-cell is open.\par 
Finally  $ \Big( C$ has non-empty interior $\Big) \ \Rightarrow \dim (C) = (m,n)$ is clear. 
\end{proof}

\begin{prop}
Let $S\subset K^m \times \Gamma^n \times \Gamma$ be a subanalytic set.
Assume that for all $z\in K^m \times \Gamma^n$ the fibre 
$S_z : =\{ ( \gamma \in \Gamma \st (z,\gamma) \in S \}$ is finite.
Then there exists an integer $N$ such that 
for all $z\in K^m \times \Gamma^n$, 
$\Card(S_z) \leq N$.
\end{prop}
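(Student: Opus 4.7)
The plan is to invoke Lipshitz's quantifier elimination (theorem \ref{theoquantel}) to write $S$ as a finite boolean combination of atomic $\Ldan$-formulas, and then to derive a uniform bound $N$ from the number of atomic subformulas in which the variable $\gamma$ appears non-trivially.

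First I would apply quantifier elimination to obtain a quantifier-free formula defining $S$. The atomic subformulas then fall into two groups: those not involving $\gamma$, which only constrain $z=(x,\alpha)$ and can be ignored when studying a fibre $S_z$, and a finite list $\psi_1(z,\gamma),\ldots,\psi_M(z,\gamma)$ in which $\gamma$ does appear. After the standard rearrangement (bringing $\gamma$ on one side and collecting the $|f(x)|$'s and $\alpha^u$'s on the other), each such $\psi_i$ can be rewritten, for a fixed $z$, in the form $\gamma^{a_i}\bowtie_i c_i(z)$, with $a_i\in\Z\setminus\{0\}$, $\bowtie_i\in\{<,=\}$ and $c_i(z)\in\Gamma_0$.

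Next I would use that $\Gamma$ is an ordered divisible abelian group, so for each fixed $z$ the map $\gamma\mapsto\gamma^{a_i}$ is a strictly monotone bijection of $\Gamma$. Consequently $\psi_i(z,\cdot)$ defines in $\Gamma$ either a singleton $\{t_i(z)\}$ (in the equality case) or a half-line bounded by the unique $t_i(z)\in\Gamma$ with $t_i(z)^{a_i}=c_i(z)$. Collectively the cut points $t_1(z),\ldots,t_M(z)$ partition $\Gamma$ into at most $M+1$ open intervals together with at most $M$ singletons, and by construction every $\psi_i(z,\cdot)$ has constant truth value on each piece of this partition. The full boolean combination $\psi(z,\cdot)$ defining $S_z$ is therefore also constant on each piece, so $S_z$ is a union of some of these pieces.

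The hypothesis that $S_z$ is finite then forces every open interval contributing to this union to be empty, whence $S_z\subseteq\{t_1(z),\ldots,t_M(z)\}$ and $\Card(S_z)\leq M$ uniformly in $z$, so we may take $N=M$. The main technical annoyance I anticipate is the bookkeeping alluded to in the warning of section \ref{subsec:warning}: some of the $c_i(z)$ may take the value $0\in\Gamma_0$, or some of the denominators $|f(x)|$ may vanish, making the rewriting $\gamma^{a_i}\bowtie_i c_i(z)$ ill-defined. This is handled by a preliminary partition of $K^m\times\Gamma^n$ into finitely many definable pieces on which each of the $D$-functions involved is either identically zero or nowhere zero, running the argument on each piece, and taking the maximum of the bounds obtained.
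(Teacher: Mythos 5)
Your proof is correct and takes essentially the same route as the paper's: after quantifier elimination the atomic conditions on $\gamma$ reduce to comparisons $\gamma^{a_i}\bowtie c_i(z)$, and finiteness of the fibre forces $S_z$ into the set of at most $M$ associated cut points, exactly as the paper (via the reduction borrowed from the proof of proposition \ref{propfonccont}) discards the interval conditions and confines $S_z$ to the finitely many roots $\sqrt[a_i]{|f_i(x)|\,\alpha^{u_i}}$. The only cosmetic difference is that you spell out the partition of $\Gamma$ by cut points and the degenerate cases ($c_i(z)=0$, vanishing denominators) that the paper leaves implicit.
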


\begin{proof}
Following the proof of 
proposition \ref{propfonccont}, we can assume that $S$ is defined by a 
formula $\varphi$ involving only the variables 
$(x,\alpha) \in K^m \times \Gamma^n$ and a conjunction of formulas 
\[ \{ (x,\alpha,\gamma) \in \Kom\times \Gamma^n \times \Gamma \st 
|f_i(x)| \alpha^{u_i} = \gamma^{a_i} \} \ i=1\ldots N \]
with $f_i : K^m \to K$ a definable function, 
$u_i \in \Z^n$, and $a_i \in \N^*$. \\
Hence, if $z=(x,\alpha)\in K^m \times \Gamma^n$,
\[ S_z \subset \Big\{ \sqrt[a_i]{ |f_i(x)| \alpha^{u_i} }  \Big\}_{i=1\ldots N} \]
hence $\Card(S_z)\leq N$.
\end{proof}

We now lists some properties which are the essential ideas of the proof of the 
cell decomposition theorem \ref{theomixedcelldec}. 
Note that this a complete adaptation of the proof 
\cite[th 4.2.11]{VDD}:

\begin{fact}[O-minimality in a nutshell]
\label{lemmebound}
Let $A \subset \Gamma$ be a definable subset.
\begin{enumerate}
\item 
There exists $M,N \in \N$, two sequences
\[a_1 <b_1 \leq a_2 <b_2 \leq a_3 < \ldots b_{N-1} \leq a_N <b_N \]
and $c_1<c_2 <\ldots <c_M$ of elements of $\Gamma$ 
(we allow $a_1= -\infty$ and $b_N=+\infty$), and 
some $\lceil_i ,\rfloor_i \in \big\{ ],[ \big\}$ for $i=1\ldots N$ such that 
\begin{align*}
 A = ( \bigcup_{i=1}^M \lceil_i a_i,b_i \rfloor_i ) \cup ( \bigcup_{j=1}^N \{c_j\} ) \\
 \text{for}  \ i=1\ldots N, \ j=1\ldots M, \ c_j \notin \{ a_i, b_i \} \\
 \text{if} a_i = b_{i+1}, \text{then}  \rfloor_i = [ \ \and \lceil_{i+1} = ].
 \end{align*}
This definition forces the sets $\lceil_i a_i,b_i \rfloor_i$ and $\{c_j\}$ to be the maximal intervals contained in $A$, hence are uniquely determined by A, in 
 particular the couple $(M,N)$ is determined by $A$, and we will say that $A$ is of type $(M,N)$.
 \item 
 Bd$(A) = \{a_i,b_i,c_j \ i=1\ldots N, \ j=1\ldots M\}$ where 
 Bd$(A) = \overline{A}\setminus A$. 
 \item 
 If $S \subset K^m \times \Gamma^n\times \Gamma$ is a definable set, 
 for $(M,N)\in \N^2$ let 
 \[S^{(M,N)} = \{ z\in K^m \times \Gamma^n \st S_z \ \text{is a definable set of} \ \Gamma \ \text{of type} \ (M,N) \}.\]
 Then 
 $S^{(M,N)}$ is definable.
 \item 
 Let $S \subset K^m \times \Gamma^n \times \Gamma$ be a definable set and $(M,N) \in \N^2$.
 If $z\in \SMN$, $S_z \subset \Gamma$ is by definition a definable set of $\Gamma$ of type 
 $(M,N)$, and has a canonical decomposition 
 \[ S_z = \Big( \bigcup_{i=1}^M \lceil_i a_i(z),b_i(z)\rfloor_i \Big) \cup \Big( \bigcup_{j=1}^N \{c_j(z)\} \Big) \]
 as in (1). Then the functions 
 $a_i,b_i,c_j : \SMN \to \Gamma$ are definable.
 \item 
 Let $S \subset K^m\times \Gamma^n \times \Gamma$ be a subanalytic set.
 There is only a finite number of $(M,N)\in \N^2$ such that 
 $\SMN \neq \emptyset$. In other words, only finitely many types are realized by 
 $\{S_z\}, \ z\in K^m\times \Gamma$, the family of definable sets of $ \Gamma$.  
\end{enumerate}
\end{fact}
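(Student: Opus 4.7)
My plan for (1) and (2) is to invoke o-minimality of $\Gamma$ as a divisible ordered abelian group, developed in \cite[Chapter~3]{VDD}. Any definable $A \subset \Gamma$ is a finite disjoint union of points and open intervals; regrouping these into maximal convex subsets of $A$ yields the canonical form of (1), each maximal convex subset being either a singleton (contributing a $c_j$) or an interval (contributing a $\lceil_i a_i, b_i \rfloor_i$ piece, with bracket type recorded according to whether the endpoint lies in $A$). Uniqueness of the maximal convex decomposition forces $(M,N)$ and the endpoints to be canonical invariants of $A$. For (2), closing $A$ adjoins exactly the endpoints of the open intervals, while the isolated singletons $\{c_j\}$ lie in both $\overline{A}$ and $\overline{\Gamma \setminus A}$, so the topological boundary of $A$ is precisely $\{a_i, b_i, c_j\}$.

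For (3), I would write the predicate ``$S_z$ has type $(M,N)$'' as a first-order formula in $z$: existentially quantify over $2M+N$ candidate elements of $\Gamma$, impose the ordering and non-degeneracy conditions from (1), and then require that for every $\gamma$, $(z,\gamma) \in S$ if and only if $\gamma$ lies in the prescribed union of intervals and singletons. Each clause is an $\mathcal{L}_1$- (respectively $\mathcal{L}_1^D$-) formula in $z$, so $S^{(M,N)}$ is definable. Item (4) then follows because on $S^{(M,N)}$ the decomposition is unique, and each endpoint function can be recovered by a first-order formula using the ordering: for instance $a_i(z)$ is the unique $\gamma \in \Gamma$ that is the left endpoint of the $i$-th maximal convex component of $S_z$, a condition directly expressible from the preceding data.

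The genuine content, and the hard part, is (5). A pointwise application of (1) only yields a type $(M_z, N_z)$ for each $z$ with no a priori uniform bound. To force uniformity I plan to exploit quantifier elimination (theorems recalled in \ref{subsubsec:ACVF} and \ref{subsubsec:an}): $S$ is defined by some quantifier-free formula, and after the kind of normalization used in the proof of proposition \ref{propfonccont}, this formula becomes a Boolean combination of finitely many atomic conditions of the form $\gamma^a \bowtie F(x,\alpha)$, where $F$ is built from valued-field data and $\alpha$-monomials. For each $z$ such an atomic condition carves out of $\Gamma$ a subset of very simple shape (empty, a singleton, a half-line, or an interval), so the Boolean combination presents $S_z$ as a union of uniformly boundedly many maximal convex pieces, the bound depending only on the syntactic complexity of the defining formula and not on $z$. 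The main obstacle is making this syntactic-complexity argument airtight in the analytic setting, where the $D$-functions must first be handled via proposition \ref{propfonccont} so that the $\gamma$-atomic conditions genuinely fit into the finitely many shapes listed above.
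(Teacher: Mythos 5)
Your proposal is correct, but there is little in the paper to compare it against: the statement is presented as a \emph{Fact} with no proof at all, only the surrounding remark that it is ``a complete adaptation'' of the cell decomposition theorem in van den Dries's book. What you wrote is essentially the argument the author takes for granted. For (1)--(4) your route through maximal convex components, and the first-order expressibility of ``being of type $(M,N)$'' and of the endpoint functions, is the standard o-minimal one (modulo the finite disjunction needed to handle the cases $a_1=-\infty$ and $b_N=+\infty$, and the $\Gamma$ versus $\Gamma_0$ issue flagged in the paper's ``word of warning''). For (5), which you rightly single out as the only point where the o-minimal machinery cannot be quoted verbatim because the parameter space involves the $K$-sort, your quantifier-elimination argument is exactly the technique the paper itself deploys in the unnumbered proposition just before the Fact (uniform bound on the cardinality of finite fibres) and in proposition \ref{propfonccont}: after elimination, each atomic condition depends on $\gamma$ only through a monomial $\gamma^a$ compared with an element of $\Gamma_0$ determined by $z$, hence cuts out of $\Gamma$ a convex set with at most one boundary point, and a Boolean combination of $k$ such conditions has at most $2k+1$ maximal convex components --- a bound independent of $z$. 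Your residual worry about the analytic setting is therefore moot: $D$-functions take only $K$-sort arguments, so the $\gamma$-dependence of the atomic formulas already has the required shape without further normalization. One small caveat on (2): you compute the topological boundary $\overline{A}\cap\overline{\Gamma\setminus A}$, whereas the paper literally defines $\mathrm{Bd}(A)=\overline{A}\setminus A$; with the literal definition the points $c_j$ (which lie in $A$) and any closed endpoints would have to be excluded, so the statement as printed is slightly off and your reading is the one that makes it true.
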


The proof of our mixed cell decomposition theorem can now be done as in 
\cite[th 4.2.11]{VDD}:

\begin{theo}
\label{theomixedcelldec}
Let $S \subset K^m\times \Gamma^n$ be a definable set. 
There exists a partition 
\[S = \coprod_{i=1}^l C_i \]
where each $C_i$ is a cell of 
$K^m \times \Gamma^n$.
\end{theo}
\begin{proof}
We prove this by induction on $n$. \par 
For $n=0$, $S\subset \Kom$ is just a definable subset of 
$\Kom$, hence is a $S-\emptyset-$cell by definition. \par
Let $n\geq 0$, and $S\subset K^m \times \Gamma^n \times \Gamma$ be a definable set. 
According to lemma \ref{lemmebound}, there exists only a finite number of possible couples 
$(M,N) \in \N^2$, say $\mathcal{Q}$, such that $S^{(M,N)} \neq \emptyset$, i.e. such that for some 
$z \in K^m \times \Gamma^n$, $S_z$ has type $(M,N)$.
For such a couple 
$(M,N)$, we can then define functions $a_i,b_i,c_j : \SMN \to \Gamma$ which are definable 
(see lemma \ref{lemmebound} (4)). 
We then define 
\[]a_i^{(M,N)}, b_i^{(M,N)} [ \  := \big\{ (z,\gamma) \st z\in \SMN \ \and \ a_i(z)<\gamma<b_i(z) \}\]
\[ \big\{c_j^{(M,N)} \big\} \  := \Big\{ (z,\gamma) \st z\in \SMN \ 
\and \ c_j(z)= \gamma \Big\} .\]
By construction, $S$ is the disjoint union 
\[ S =   \bigcup_{(M,N) \in \mathcal{Q} }\Big( \bigcup_{i=1}^M ]a_i^{(M,N)},b_i^{(M,N)}[ \Big) \cup \Big( \bigcup_{j=1}^N \{c_j^{M,N} \} \Big) .\]
Now, according to proposition 
\ref{propfonccont}, we can shrink the definable sets 
$\SMN$ and assume that the $a_i,b_i,c_j's$ are continuous.
In addition, by induction hypothesis, decomposing each $S^{(M,N)}$ if necessary, we can assume that $\SMN$ 
is an $X-(i_1,\ldots,i_n)$-cell where 
$X \subset K^m$ is a definable set and $(i_1,\ldots,i_n) \in \{0,1\}^n$.
In that case, 
$]a_i^{(M,N)}, b_i^{(M,N)} [$ is a $X-(i_1,\ldots,i_n,1)$-cell and
$\{c_j^{(M,N)} \}$ is a $X-(i_1,\ldots,i_n,0)$-cell. 
\end{proof}

\subsection{Mixed dimension}
\label{sec2.2}

The mixed cell decomposition theorem will allow us to develop a theory of 
dimension for definable subsets of 
$\KmGn$ which extends the dimension theories for definable subsets of 
$K^m$ (as exposed in sections \ref{subsection:dimACVF} and \ref{sec1.2}).\par

\begin{defi}
\label{defmixeddim}
Let $S \subset \KmGn$ be a definable set. 
\index{dimension!of a definable set of $\Kom \times \Gamma^n$}
We define the dimension of $S$, denoted by $\dim(S)$, as the finite subset of 
$\llbracket 0,m \rrbracket \times \llbracket 0,n \rrbracket$:
\[\dim(S) = \{ (d_1,d_2) \in \N^2 \st S  \ \text{contains a cell C of dimension} \ (d_1,d_2) \}\]
where the dimension of a cell is the one given by definition \ref{defdimcell}.
\end{defi}
We want to point out that when $C$ is a cell, if $(d_1,d_2)$ is its dimension as defined in \ref{defdimcell}, then 
the dimension of $C$ according to \ref{defmixeddim} is $\langle (d_1,d_2)\rangle$ 
(this will be proved in \ref{propgen} (2) ). \par 
If $D$ is a finite subset of $\N^2$, we will represent it with one symbol $\bullet$ for each point of $D$.
For instance to the subset 
\[ D_1= \{(0,0),  (0,3), (0,4),(1,4),(2,0),(2,1),(2,2),(4,0),(4,1)  \}\]
will will associate the diagram:

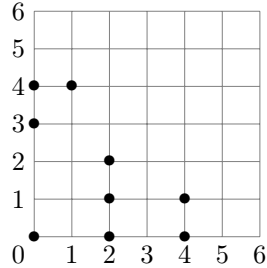
\begin{figure}[H]
\label{figure:D1}
\begin{tikzpicture}[scale=0.5]
\draw [very thin, gray] (0,0) grid (6,6);
\draw (0,0) node[below left] {0};
\foreach \x in {1,...,6} \draw (\x,0) node[below]{\x};
\foreach \y in {1,...,6} \draw (0,\y) node[left]{\y};
\draw (0,0) node {$\bullet$};
\draw (0,3) node {$\bullet$};
\draw (0,4) node {$\bullet$};
\draw (1,4) node {$\bullet$};
\draw (2,2) node {$\bullet$};
\draw (2,1) node {$\bullet$};
\draw (2,0) node {$\bullet$};
\draw (4,1) node {$\bullet$};
\draw (4,0) node {$\bullet$};
\end{tikzpicture}
\caption{Diagram associated to $D_1$.}
\end{figure}

In order to compare dimensions, we equip $\N^2$  with the partial order $\leq$ defined by 
$(d_1,d_2) \leq (d'_1,d'_2)$ if 
$d_1 \leq d'_1$ and $d_2 \leq d'_2$.
If $A$ and $B$ are subsets of $\N^2$, $A \leq B$ will mean $A\subset B$.\\par 
We will say that a set $D \subset \N^2$ is a lower set if for all 
$x\in \N^2 y\in D$, if $x\leq y$, then $x\in D$. 
In other words, if one represents $D$ in the plane, it is a lower set if it is stable in the directions $\leftarrow$ 
and $\downarrow$. For instance $D_1$ is not a lower set. 
If we denote by $D_2$ the smallest lower set which contains $D_1$, then $D_2$ is represented by the following diagram.

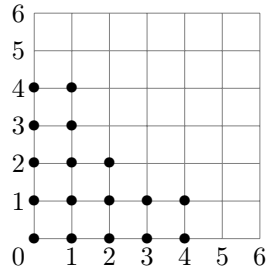
\begin{figure}[H]
\label{figure:D2}
\begin{tikzpicture}[scale=0.5]
\draw [very thin, gray] (0,0) grid (6,6);
\draw (0,0) node[below left] {0};
\foreach \x in {1,...,6} \draw (\x,0) node[below]{\x};
\foreach \y in {1,...,6} \draw (0,\y) node[left]{\y};
\foreach \y in {0,...,4} \draw (0,\y) node {$\bullet$};
\foreach \y in {0,...,4} \draw (1,\y) node {$\bullet$};
\foreach \y in {0,...,2} \draw (2,\y) node {$\bullet$};
\foreach \y in {0,...,1} \draw (3,\y) node {$\bullet$};
\foreach \y in {0,...,1} \draw (4,\y) node {$\bullet$};
\end{tikzpicture}
\caption{The diagram of $D_2$, the smallest lower set containing $D_1$.}
\end{figure}

A finite lower set of $\N^2$ marks out a bounded convex subset of $\R_+^2$ that we fill in with some small dots. 
With this graphical convention $D_2$ is represented by the diagram:

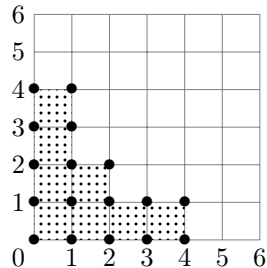
\begin{figure}[H]
\label{figure:2D2}
\begin{tikzpicture}[scale=0.5]
\draw [very thin, gray] (0,0) grid (6,6);
\draw (0,0) node[below left] {0};
\foreach \x in {1,...,6} \draw (\x,0) node[below]{\x};
\foreach \y in {1,...,6} \draw (0,\y) node[left]{\y};
\foreach \y in {0,...,4} \draw (0,\y) node {$\bullet$};
\foreach \y in {0,...,4} \draw (1,\y) node {$\bullet$};
\foreach \y in {0,...,2} \draw (2,\y) node {$\bullet$};
\foreach \y in {0,...,1} \draw (3,\y) node {$\bullet$};
\foreach \y in {0,...,1} \draw (4,\y) node {$\bullet$};
\fill [gray,pattern=dots] (0,0) -- (0,4) -- (1,4)--(1,2)--(2,2)--(2,1)--(4,1)--(4,0)--cycle;
\end{tikzpicture}
\caption{The diagram of $D_2$ which has been filled.}
\end{figure}
 
We will need to add dimensions. If $A$ and $B$ are subsets of $\N^2$, we will set 
\[A+B = \{a+b \st a\in A, \ b\in B\}.\]
More generally, if $A\subset \N^2$ and 
$B\subset \Z^2$, we will set 
\[A+ B = \{ a+b \in \N^2 \st a\in A, b\in B \}.\]
We will have to take maximum of dimensions. 
If $A,B$ are subsets of $\N^2$, we will set 
\[ \max (A,B) = A\cup B .\]
Note that if $A$ and $B$ are lower sets, $A\cup B$ is also a lower set.
\par  
Eventually, if $(d_1,d_2) \in \N^2$, we will set 
\[\langle (d_1,d_2) \rangle = \{ x\in \N^2 \st x\leq (d_1,d_2) \}.\]
This is the smallest lower set which contains $(d_1,d_2)$, and 
$(d_1,d_2)$ is the greatest element (w.r.t. $\leq$) of $\langle (d_1,d_2) \rangle$. 
We draw a picture to describe the situation if $(d_1,d_2) = (3,5)$:
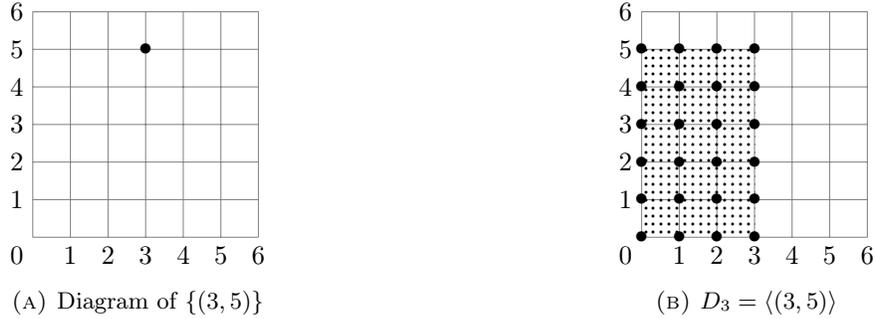
\begin{figure}[H]
\label{figure:D3}
\begin{center}

\begin{subfigure}[b]{0.4\textwidth}
\centering
\begin{tikzpicture}[scale=0.5]
\draw [very thin, gray] (0,0) grid (6,6);
\draw (0,0) node[below left] {0};
\foreach \x in {1,...,6} \draw (\x,0) node[below]{\x};
\foreach \y in {1,...,6} \draw (0,\y) node[left]{\y};
 \draw (3,5) node {$\bullet$};
\end{tikzpicture}
\caption{Diagram of $\{ (3,5)\}$}
\end{subfigure}
\hspace{2cm}
\begin{subfigure}[b]{0.4\textwidth}
\centering
\begin{tikzpicture}[scale=0.5]
\draw [very thin, gray] (0,0) grid (6,6);
\draw (0,0) node[below left] {0};
\foreach \x in {1,...,6} \draw (\x,0) node[below]{\x};
\foreach \y in {1,...,6} \draw (0,\y) node[left]{\y};
\foreach \x in {0,...,3}
\foreach \y in {0,...,5} \draw (\x,\y) node {$\bullet$};
\fill [gray,pattern=dots] (0,0) rectangle (3,5);
\end{tikzpicture}
\caption{$D_3 = \langle (3,5) \rangle$}
\end{subfigure}
\end{center}
\caption[Diagram of $D_3$]{On the left the diagram of $\{3,5\}$. On the right, $D_3:=\langle (3,5) \rangle$, 
the smallest lower set containing $(3,5)$.
For instance, if $S=\Kov{3} \times \Gamma^5$, then 
$\dim(S)=D_3$.}
\end{figure}
 
 \begin{lemme}
If $S$ is a definable set of $\KmGn$, $\dim(S)$ is a finite lower set of $\N^2$.
\end{lemme}
\begin{proof}
If $C$ is a $X-(i_1,\ldots,i_n)$-cell in $S$ with $d_1 = \dim(X)$ and $d_2=i_1+ \ldots+i_n$, 
and if 
$(d'_1,d'_2) \leq (d_1,d_2)$, we can choose 
$Y\subset X$ such that 
$\dim(Y) =d'_1$ and 
$C':= C \cap (Y \times \Gamma^n)$ is a 
$Y-(i_1,\ldots,i_n)$-cell of dimension $(d'_1,d_2)$. 
It is now easy to find a 
cell $C'' \subset C'$ of dimension $(d'_1,d'_2)$. We only have to decrease the $\Gamma$-dimension.
For instance, if $A$ is $X-(i_1,\ldots,i_n)$-cell, and 
$B=]f,g[$ a $X-(i_1,\ldots,i_n,1)$-cell above $A$, then 
$\Graph(\sqrt{fg})$ is a $X-(i_1,\ldots,i_n,0)$-cell included in $B$. 
\end{proof}

\begin{lemme}
\label{lemmeinjcell}
\begin{enumerate}
\item 
Let $C \subset \KmGn$ be a cell of dimension $(d_1,d_2)$, 
and assume that \[C= C_1 \cup \ldots \cup C_N \] where 
each $C_i$ is a cell. 
Then there exists some index $j \in 1\ldots N$ such 
that $(d_1,d_2) = \dim(C_j)$.
\item
If $f : C \hookrightarrow C'$ is an injective definable map between cells, 
$\dim(C) \leq \dim(C')$.
\end{enumerate}
\end{lemme}

\begin{proof}
\begin{enumerate}
\item 
Let $C$ be a $X-(i_1,\ldots,i_n)$-cell and $\{C_j\}_{j=1\ldots N}$ some 
$X_j-(i^j_1,\ldots,i^j_{n})$-cells such that 
$\displaystyle C=\cup_{j=1\ldots N} C_j$. 
Then for all $j$, it is easy to see that 
$X_j \subset X$, hence that 
$\dim(X_j) \leq \dim(X)$. \par 
In addition if $x \in C_j$, then 
$(C_j)_x:= \{ \gamma \in \Gamma^n \st (x,\gamma) \in C_j \}$ is a $\Gamma$-definable set included in $C_x$, this proves that 
$\dim(C_j) \leq d$. \par 
Now, $\max_{j=1\ldots N} (\dim(X_j) ) =\dim(X)$ (see \ref{propdimunionsub}). 
Let then 
\[ X' := X \setminus ( \bigcup_{\underset{\dim(X_i) < \dim(X) }{i \ \text{such that} } } X_i ). \]
Then $\dim(X') = \dim(X)$ and replacing $X$ by $X'$, we can assume that 
$\dim(X_j) = \dim(X)$ for all $j$. \par
Hence if $x\in X$, 
$C_x = \cup_j(C_j)_x$, hence for some $j$, 
$(C_j)_x$ has the same dimension as $C_x$. So $C_j$ is a cell of the same dimension as $C$.
\item 
Let $C$ be a $X-(i_1,\ldots,i_l)$-cell and 
$C'$ a $Y-(j_1,\ldots,j_n)$-cell and 
$f: C \hookrightarrow C'$ a definable injective map.  
We first remark\footnote{ This can be proved by 
induction on $l$. For instance if  $C = ]f,g[$ is a $X-(1)$-cell, we take 
\[
\begin{array}{cccc}
s : &X &\hookrightarrow &C\\
  &  x & \mapsto & (x, \sqrt{f(x)g(x)})
  \end{array}.
  \] 
} 
that the first projection 
$\pi : C \to X$ has a definable section 
$s : X \hookrightarrow C$.
We then obtain an injective map 
$g : X \hookrightarrow C'$. \par  
For $i=1 \ldots n$, let $g_i$ be the composite of $g$ with the i-th projection on $\Gamma$.
According to \ref{propfonccont}, we can even assume that 
$g_i = \sqrt[a_i]{ |G_i| }$ for some continuous 
definable function $G_i : X \to K^*$.
Is is then easy to see that that we can shrink $X$ into $V$
and assume that $\dim(V)=\dim(X)$ and that the $(g_i)_{|V}$'s
are constant.\par
Hence if we project on $Y$ we now obtain a 
definable injection 
$V \hookrightarrow Y$. Hence 
$\dim(Y) \geq \dim(V) = \dim(X)$. 
This proves that 
$d_1 \leq d'_1$. \par
Now, let $x\in X$. 
Then $f$ induces a map 
$f : C_x \to C' \subset Y \times \Gamma^n$ which is not necessarily 
injective. 
However, if we project on $Y$, we obtain a map 
$\alpha : C_x \to Y$ with $C_x \subset \Gamma^l$, and $Y \subset K^p$ for some $p$. 
Hence according to lemma \ref{finite_image}, $\alpha (C_x)$ is finite.
Hence there exists a definable subset 
$P \subset C_x$ such that 
$\alpha_{|P} : P \to Y$ is constant, with image $y$ say, and such that $\dim(P) = \dim(C_x) =d_2$. 
Now, 
$f_{|P} : P \to C'$ is injective and even induces an injective map 
$P \hookrightarrow (C')_y$ which is a $\Gamma$-definable set of dimension 
$d'_2$. Hence $d_2 = \dim(P) \leq d'_2$.

\end{enumerate}
\end{proof}

We now give some general properties satisfied by dimension of definable subsets of 
$\KmGn$. 

\begin{prop}
\label{propgen}
$ $
\begin{enumerate}
\item
Let $X \subset Y \subset \KmGn$ be definable subsets. Then 
$\dim(X) \leq \dim (Y)$. 
\item
Let $C \subset \KmGn$ be a cell of dimension $(d_1,d_2)$ in the sense of definition \ref{defdimcell}. 
Then, in the sense of definition , $\dim(C) = \langle (d_1,d_2) \rangle$.
\item 
If $f : X \hookrightarrow Y$ is a definable injection of definable subsets of 
$\KmGn$ (resp. $K^{m'} \times \Gamma^{n'}$), then $\dim(X) \leq \dim(Y)$. \par
If $f$ is bijective, $\dim(X)= \dim(Y)$.
\item 
$\dim(X \cup Y) = \max (\dim(X),\dim(Y) )$.
\item 
If $X = \cup_{i=1}^N C_i$ is a union of cells of dimension $(d_1^i, d_2^i)$ (according to definition 
\ref{defdimcell}), then 
$\dim(X) = \max_i \langle (d^i_1,d^i_2) \rangle$.
\end{enumerate}
\end{prop}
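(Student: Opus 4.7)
The plan is to prove the five parts in order, reducing each one to the two key tools already in hand: the cell decomposition Theorem \ref{theomixedcelldec} and Lemma \ref{lemmeinjcell}. Parts (1), (4) and (5) are essentially bookkeeping once the dimension is defined cell-wise, part (2) is almost tautological given the lower-set lemma stated just before, and the substantive content is part (3), for which the injection $f$ has to be rendered \emph{cell-wise} by a double cell decomposition.

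For (1), any cell $C\subset X$ is also a cell in $Y$, so $\dim(X)\subseteq \dim(Y)$. For (2), let $C$ be a cell of dimension $(d_1,d_2)$. The inclusion $\dim(C)\subseteq\langle(d_1,d_2)\rangle$ follows from Lemma \ref{lemmeinjcell} (2): any subcell $C'\subset C$ yields a definable injection $C'\hookrightarrow C$, hence $\dim(C')\leq(d_1,d_2)$. The reverse inclusion $\langle(d_1,d_2)\rangle\subseteq\dim(C)$ is precisely the construction already carried out in the lemma preceding Lemma \ref{lemmeinjcell} (shrinking the base and replacing open strips by graphs of $\sqrt{fg}$), so it need only be invoked.

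The main step is (3). Let $f:X\hookrightarrow Y$ be a definable injection and fix a cell $C\subset X$ of dimension $(d_1,d_2)$; the goal is to produce a cell in $Y$ of dimension $\geq(d_1,d_2)$. Apply Theorem \ref{theomixedcelldec} to $f(C)\subset Y$ to get a finite partition $f(C)=\bigsqcup_j D_j$ into cells of $K^{m'}\times\Gamma^{n'}$. Pull this back to a partition $C=\bigsqcup_j f^{-1}(D_j)$, and apply Theorem \ref{theomixedcelldec} once more to each $f^{-1}(D_j)$, producing a finite partition of $C$ into cells $\{E_{j,k}\}$. By Lemma \ref{lemmeinjcell} (1) some $E_{j_0,k_0}$ has dimension exactly $(d_1,d_2)$, and then the definable injection $f_{|E_{j_0,k_0}}:E_{j_0,k_0}\hookrightarrow D_{j_0}$ combined with Lemma \ref{lemmeinjcell} (2) gives $(d_1,d_2)\leq\dim(D_{j_0})$, whence $(d_1,d_2)\in\langle\dim(D_{j_0})\rangle\subseteq\dim(Y)$. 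The bijective case follows by applying the injective case to $f$ and $f^{-1}$. The only subtle point here is ensuring $f^{-1}(D_j)$ can be cell-decomposed; this is fine because definability in ACVF or the analytic language is stable under taking preimages of definable maps, so $f^{-1}(D_j)$ is itself definable.

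Finally, (4) is straightforward: the inclusion $\dim(X)\cup\dim(Y)\subseteq\dim(X\cup Y)$ is (1), and for the reverse, a cell $C\subset X\cup Y$ of dimension $(d_1,d_2)$ writes as $C=(C\cap X)\cup(C\cap Y)$, each piece cell-decomposed, and Lemma \ref{lemmeinjcell} (1) forces one component cell to realize $(d_1,d_2)$, which then lies in $\dim(X)$ or $\dim(Y)$. Part (5) then combines (4) with (2): $\dim(X)=\dim(\bigcup_i C_i)=\bigcup_i\dim(C_i)=\bigcup_i\langle(d_1^i,d_2^i)\rangle=\max_i\langle(d_1^i,d_2^i)\rangle$.
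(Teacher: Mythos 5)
Your proposal is correct and follows essentially the same route as the paper: parts (1), (4), (5) by the same bookkeeping, part (2) via Lemma \ref{lemmeinjcell}(2) together with the lower-set lemma, and part (3) by the identical double cell decomposition of $f(C)$ and of the preimages $f^{-1}(D_j)$, followed by Lemma \ref{lemmeinjcell}(1) and then (2).
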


\begin{proof}$ $
\begin{enumerate}
\item 
Is clear.
\item
Is a consequence of lemma \ref{lemmeinjcell} (2).
\item 
For the first assertion, let $C\subset X$ be a cell of dimension $(d_1,d_2)$. 
Then according to the cell decomposition theorem, we can find a decomposition 
$f(C) = C'_1 \cup \ldots \cup C'_N$ in $N$ cells. 
Then if we apply the cell decomposition theorem \ref{theomixedcelldec} to 
all the $f^{-1}(C'_i)$, we can find a decomposition in cells
$C = C_1 \cup \ldots \cup C_M$ such that
for all $i$, there exists some $j$ such that 
$f(C_i) \subset C'_j$. \par 
Now, according to  
lemma \ref{lemmeinjcell} (1), there exists some $i$ 
such that $\dim(C_i) = \dim(C)$.
Since $f_{|C_i} : C_i \hookrightarrow C'_j$ is injective, according to 
lemma \ref{lemmeinjcell} (2), 
$\dim(C_i) \leq \dim(C'_i) \leq \dim(Y)$. 
Hence this proves that 
$\dim(X) \leq \dim(Y)$.\par
To prove the second assertion, we apply the first one to $f$ and $f^{-1}$.
\item 
Clearly, 
$\max(\dim(X), \dim(Y) ) \leq \dim(X\cup Y)$. 
Conversely, let $C\subset X\cup Y$ be a cell. 
We apply the cell decomposition theorem to 
$X \cap C$ and $Y\cap C$:
\[ X \cap C = C_1\cup \ldots \cup C_M, \ \and \ Y\cap C = C'_1\cup \ldots \cup C'_N.\]
Then, $C= C_1\cup \ldots \cup C_M \cup C'_1\cup \ldots \cup C'_N$, hence according to lemma \ref{lemmeinjcell} (2), 
for some $i$ 
$\dim(C_i) = \dim(C)$ or for some $j$, $\dim(C'_j) = \dim(C)$, hence 
$\dim(C) \leq \max (\dim(X), \dim(Y))$.
\item Is a consequence of (2) and (4).
\end{enumerate}
\end{proof}

\subsection{Behavior under projection}
\label{sec2.3}

\begin{lemme}
\label{lemmefondim1}
Let $K$ be an algebraically closed valued field. 
Let $f_1,\ldots,f_N \in K[T]$. 
There exist
\begin{itemize}
 \item a decomposition 
$K = V_1\cup \ldots \cup V_M$ of 
$K$ in definable subsets\footnote{which are 
in fact semialgebraic according to \cite[4.7]{LRline}.} 
\item an integer $m$
\item for each $j=1 \ldots m$, some
$a_j \in K$
\item for all $1\leq i \leq N$, $1\leq j \leq M$, 
some integers $n_{i,j} \in \N$ and some $c_{i,j} \in \Gamma$ 
\end{itemize} 
such that for all $1\leq i \leq N$, $1\leq j \leq M$,
\[
 |f_i(x)| = c_{i,j} |x-a_j|^{n_{i,j}}.\]
\end{lemme}
We will not write the proof, which consists in a precise analysis of the combinatorics of some closed balls 
associated with the configuration of the roots of the $f_i$'s. 
In the non-Archimedean case, the Berkovich line $\mathbb{A}^{1, \an}_K$ gives a good insight on it.

\begin{lemme}
\label{lemmedimproj}
Let $C \subset K\times \Gamma^n$ be a cell of dimension 
$(1,d)$. 
Let $\pi : K \times \Gamma^n \to \Gamma^n$ be the second projection. Then $\dim( \pi(C) ) \leq d+1.$.
\end{lemme}
\begin{proof}
Assume that $C$ is a 
$X-(\icell)$-cell, where $X$ is a definable subset of $K$  with $\dim(X)=1$, and 
$i_1+\ldots+i_n=d$.
Then by definition of a cell, there exists a sequence 
$C_0, \ldots C_n$ such that 
\begin{itemize}
\item 
$C_0 = X$.
\item 
For each $k=0 \ldots n$, $C_k$ is a 
$X-(i_1,\ldots,i_k)$-cell.
\item For each 
$k=0 \ldots n-1$ 

\begin{itemize}
\item if $i_{k+1} = 0$, there exists a (continuous) definable map 
$f_k : C_k \to \Gamma$ such that 
$C_{k+1} = \Graph(f_k)$. 
\item If $i_{k+1}=1$, there exist (continuous) definable maps
$f_k, g_k : C_k \to \Gamma$ such that $f_k<g_k$ and 
$C_{k+1} = ]f_k, g_k[$.
\end{itemize}

\end{itemize}
Remind that by construction, $C_k$ is a definable subset of 
$X\times \Gamma^k \subset K \times \Gamma^k$, hence 
$f_k, g_k : X \times \Gamma^k \to \Gamma$ are definable maps. 
According to proposition \ref{propfonccont}, we can shrink $X$ such that 
\[ 
\begin{array}{cccc}
f_k :& X \times \Gamma^k & \to & \Gamma \\
     & (x,\gamma) &\mapsto & \sqrt[d_k]{|F_k(x)| \gamma^{u_k} }
\end{array}
\]
for some definable $F_k : X \to K$, some 
$u_k\in \Z^k, d_k\in \N^*$, and similarly,  
\[ g_k(x,\gamma) = \sqrt[e_k]{ |G_k(x)| \gamma^{v_k} }.\]
Now, according to \cite[3.3]{LRline}, we can again shrink 
$X$ and assume that 
$F_k, G_k \in K(T)$ are fractions without pole on $X$. \par
Now, according to lemma \ref{lemmefondim1}, we can again 
shrink $X$ and assume that \\
there exist some $a\in K, \ 
m_k,n_k \in \Z, b_k,c_k\in \Gamma$ such that for all $x\in X$
\begin{itemize}
\item 
$|F_k(x)| = b_k|x-a|^{m_k}$ 
\item 
$|G_k(x)| = c_k |x-a|^{n_k}$.
\end{itemize}
We can again shrink $X$ so that 
$I = \{|x-a| \st x\in X \}$ is a cell of $\Gamma$ (that is to say, a singleton 
or an open interval). 
Then if we consider the map 
\[
\begin{array}{cccc}
\varphi : & X\times \Gamma^n & \to & \Gamma \times \Gamma^n \\
           & (x,\gamma) & \mapsto & (|x-a| , \gamma)

\end{array} 
\]
it is easily checked that 
$C' := \varphi(C) \subset \Gamma\times \Gamma^n$ is a cell: 
\begin{itemize}
\item 
A $(1,i_1,\ldots, i_n)$-cell if $I$ is an interval, hence of dimension $d+1$.
\item A $(0,i_1,\ldots,i_n)$-cell if $I$ is a singleton, hence of dimension $d$.
\end{itemize}
Now if 
$p : \Gamma \times \Gamma^n \to \Gamma^n$ is the coordinate projection along 
the last $n$ coordinates, by construction,  $\pi = p \circ \varphi$.
 Hence  
$\pi(C) = p (\varphi(C)) =p(C')$ and since the possible dimensions of $C'$ 
are $d,d+1$, the possible dimensions for $\pi(C)$ are also 
$d,d+1$ (see \ref{propimagedim}).
\end{proof}

\begin{prop}
\label{propimage}
Let $f:X \to Y$ be a definable map of definable subsets,  
$X \subset \KmGn$, $Y\subset K^{m'} \times \Gamma^{n'}$. Then 
\[ \dim(f(X)) \leq \max_{d\geq 0} (\dim(X) + (-d,d) ).\]
\end{prop}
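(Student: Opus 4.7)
The plan is to reduce the statement, via the mixed cell decomposition and the graph of $f$, to the case of a coordinate projection applied to a single cell, and then to handle the projection one coordinate at a time.

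First, applying the cell decomposition theorem \ref{theomixedcelldec}, write $X = \bigsqcup_i C_i$ as a disjoint union of cells. By \ref{propgen} (4) and the monotonicity of the right-hand side in $\dim(X)$, it suffices to prove the inequality when $X = C$ is a single cell of dimension $\langle (d_1, d_2)\rangle$. Consider the graph $\Gamma_f = \{(c, f(c)) : c \in C\} \subset K^{m+m'} \times \Gamma^{n+n'}$. The map $c \mapsto (c, f(c))$ is a definable bijection, so by \ref{propgen} (3) we have $\dim(\Gamma_f) = \langle (d_1, d_2)\rangle$, and $f(C)$ is the image of $\Gamma_f$ under the coordinate projection onto the $K^{m'} \times \Gamma^{n'}$-factor. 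Any such coordinate projection factors as a sequence of projections omitting a single coordinate, so it suffices to show that for any definable $S \subset K^M \times \Gamma^N$ and any projection $p$ forgetting a single coordinate, $\dim(p(S)) \leq \max_{d \geq 0}(\dim(S) + (-d, d))$.

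When $p$ forgets a $\Gamma$-coordinate, cell decomposition of $p(S)$ combined with \ref{propfonccont} lifts every cell $C'$ in $p(S)$ of dimension $(d'_1, d'_2)$ to a cell of the same dimension inside $S$ (via the graph of a continuous definable section into the forgotten coordinate), so $\dim(p(S)) \subset \dim(S)$, which is dominated by the right-hand side at $d = 0$. The essential content is the case where $p$ forgets a $K$-coordinate. Reducing to $S$ being a single $X_0$-$(i_1, \ldots, i_N)$-cell with $X_0 \subset K^{M+1}$, $\dim X_0 = a$, $\sum_j i_j = b$, and letting $q : K^{M+1} \to K^M$ be the induced projection, I would show, for any cell $C' \subset p(S)$ of dimension $(d'_1, d'_2)$, that $d'_1 \leq a$ and $d'_1 + d'_2 \leq a + b$. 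The first inequality follows from classical $K$-dimension theory (sections \ref{subsection:dimACVF} and \ref{sec1.2}): the $K$-base of $C'$ lies inside $q(X_0) \subset K^M$, whose $K$-dimension is at most $a$. For the second, I would invoke a parametrized version of \ref{lemmedimproj}: after shrinking, the fibers of $q|_{X_0}$ have a constant $K$-dimension $e \in \{0, 1\}$ with $\dim q(X_0) = a - e$, and for each $y \in q(X_0)$ the fiber $S_y \subset K \times \Gamma^N$ is a cell of dimension $(e, b)$; lemma \ref{lemmedimproj} (or the trivial observation when $e = 0$) shows its projection to $\Gamma^N$ has dimension at most $b + e$, and these uniformly assemble to give $d'_1 + d'_2 \leq (a - e) + (b + e) = a + b$.

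The main obstacle is establishing this parametric form of lemma \ref{lemmedimproj}. Its proof uses the structural description of definable functions on $K$ from \cite[3.3, 4.7]{LRline}, which must be run uniformly over the parameter $y \in q(X_0)$. Concretely, after shrinking $X_0$ to a subset of the same dimension, the governing $D$-functions (respectively rational functions in ACVF) admit the form $|F_k(x_1, y)| = c_k(y)\cdot |x_1 - a_k(y)|^{n_k}$ for definable $c_k, a_k$, and the definable change-of-variable $(x_1, y, \gamma) \mapsto (|x_1 - a_k(y)|, y, \gamma)$ converts $S$ into a cell structure on $\Gamma \times K^M \times \Gamma^N$ whose dimension matches the required bound on $p(S)$. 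Combined with the easy $\Gamma$-coordinate case, this yields the desired inequality.
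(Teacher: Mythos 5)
Your proposal is correct and follows essentially the same route as the paper: reduce via the graph and \ref{propgen} to a coordinate projection forgetting one variable, dispose of the $\Gamma$-variable case directly, and in the $K$-variable case partition the base by fiber dimension $e\in\{0,1\}$ (the sets $T(0)$, $T(1)$ of \ref{defS(i)sub}, with \ref{propdimfamily} giving $\dim(T(e))\leq a-e$) and apply lemma \ref{lemmedimproj} to the fibers. The one place you overcomplicate matters is the ``parametric form'' of \ref{lemmedimproj} that you flag as the main obstacle: it is not needed, since for a $U$-$(j_1,\ldots,j_N)$-cell $C'\subset p(S)$ the quantity $j_1+\cdots+j_N$ equals $\dim(C'_u)$ for \emph{every single} $u\in U$, so the pointwise bound $\dim(C'_u)\leq b+e$ at one point already yields $d'_1+d'_2\leq(a-e)+(b+e)$ with no uniformity over the parameter.
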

Before giving the proof, let us explain what this mean on a diagram. 
If $\dim(X)=D_2$ for instance, then the possibilities for 
$\dim(f(X))$ are encoded by the digram obtain by the digram of $D_2$ that we extend by adding new points, following arrows $\searrow$ in the south-east direction. 
\begin{figure}[H]
\label{figure:D4}
\begin{center}
\begin{subfigure}[b]{0.3\linewidth}
\centering
\begin{tikzpicture}[scale=0.5]
\draw [very thin, gray] (0,0) grid (7,7);
\draw (0,0) node[below left] {0};
\foreach \x in {1,...,7} \draw (\x,0) node[below]{\x};
\foreach \y in {1,...,7} \draw (0,\y) node[left]{\y};
\foreach \y in {0,...,4} \draw (0,\y) node {$\bullet$};
\foreach \y in {0,...,4} \draw (1,\y) node {$\bullet$};
\foreach \y in {0,...,1} \draw (2,\y) node {$\bullet$};
\foreach \y in {0,...,1} \draw (3,\y) node {$\bullet$};
\foreach \y in {0,...,1} \draw (4,\y) node {$\bullet$};
\foreach \y in {0,...,1} \draw (5,\y) node {$\bullet$};
\fill[gray,pattern=dots] (0,0) -- (0,4) -- (1,4)--(1,1)--(5,1)--(5,0)--cycle;
\end{tikzpicture}
\caption{Diagram of $D_4$}
\end{subfigure}
~
\begin{subfigure}[b]{0.3\linewidth}
\centering
\begin{tikzpicture}[scale=0.5]
\draw [very thin, gray] (0,0) grid (7,7);
\draw (0,0) node[below left] {0};
\foreach \x in {1,...,7} \draw (\x,0) node[below]{\x};
\foreach \y in {1,...,7} \draw (0,\y) node[left]{\y};
\foreach \y in {0,...,4} \draw (0,\y) node {$\bullet$};
\foreach \y in {0,...,4} \draw (1,\y) node {$\bullet$};
\foreach \y in {0,...,1} \draw (2,\y) node {$\bullet$};
\foreach \y in {0,...,1} \draw (3,\y) node {$\bullet$};
\foreach \y in {0,...,1} \draw (4,\y) node {$\bullet$};
\foreach \y in {0,...,1} \draw (5,\y) node {$\bullet$};
\foreach \x in {1,...,3} \draw [directed] (\x, 5-\x) -- (\x+1,4-\x);
\foreach \x in {0} \draw[directed] (\x+5, 1-\x) -- (\x+6,0-\x);
\fill[gray,pattern=dots] (0,0) -- (0,4) -- (1,4)--(1,1)--(5,1)--(5,0)--cycle;
\end{tikzpicture}
\caption{in between}
\end{subfigure}
~
\begin{subfigure}[b]{0.35\linewidth}
\centering
\begin{tikzpicture}[scale=0.5]
\draw [very thin, gray] (0,0) grid (7,7);
\draw (0,0) node[below left] {0};
\foreach \x in {1,...,7} \draw (\x,0) node[below]{\x};
\foreach \y in {1,...,7} \draw (0,\y) node[left]{\y};
\foreach \y in {0,...,4} \draw (0,\y) node {$\bullet$};
\foreach \y in {0,...,4} \draw (1,\y) node {$\bullet$};
\foreach \y in {0,...,3} \draw (2,\y) node {$\bullet$};
\foreach \y in {0,...,2} \draw (3,\y) node {$\bullet$};
\foreach \y in {0,...,1} \draw (4,\y) node {$\bullet$};
\foreach \y in {0,...,1} \draw (5,\y) node {$\bullet$};
\draw (6,0) node {$\bullet$};
\fill[gray,pattern=dots] (0,0) -- (0,4) -- (1,4)--(1,3)--(2,3)--(2,2)--(3,2)--(3,1)--(5,1)--(5,0)--cycle;
\end{tikzpicture}
\caption{$ D_5=\max_{d\geq 0}( D_4 + (-d,d) )$}
\end{subfigure}
\caption[Diagrams of $D_4$ and $D_5$]{On the left, an example of a finite lower set $D_4$.
For instance, if $S=\Ko \times \Gamma^4 \cup \Kov{5} \times \Gamma \subset \Kov{6} \times \Gamma^4$, then $\dim(S)=D_4$.  
In the middle, we explain how we can obtain new dimensions for $\dim(f(X))$. 
On the right we obtain the finite 
lower set $\displaystyle D_5 = \max_{d\geq 0} (D_4 + (-d,d) )$. 
The above proposition asserts that $\dim(f(X)) \leq D_5$.}
\end{center}
\end{figure}
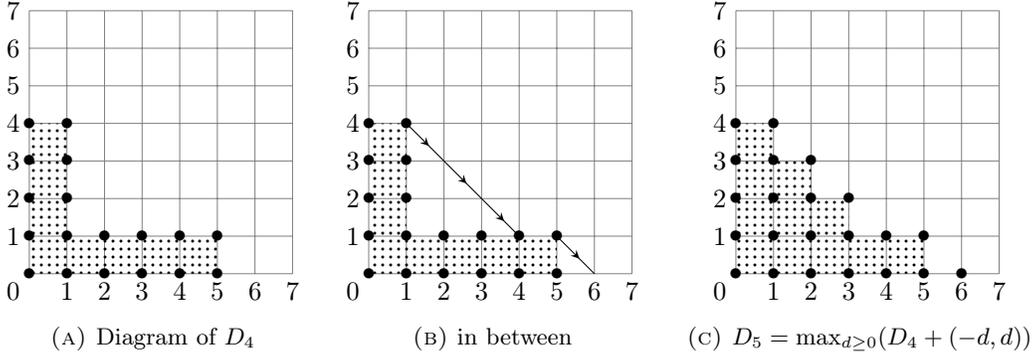
\begin{proof}
Consider 
\[ 
\xymatrix{
 & \Graph(f) \ar[dl]^{\sim}_{p_1} \ar[dr]^{p_2}& \\
 X \ar[rr]^f & &Y 
}
\]
where $\Graph(f)$ is a definable set of 
$K^{m+m'} \times \Gamma^{n+n'}$. \par 
According to proposition \ref{propgen} (3), 
$\dim(X) = \dim(\Graph(f))$. 
Hence we can assume that 
$f =p_2$, that is to say, we can assume that $f$ is a coordinate projection.\par
We can even assume that $X$ is a cell, say a cell $C$ of dimension 
$(d_1,d_2)$. 
Let us assume that 
$C$ is a $T-(i_1,\ldots,i_n)$-cell. \par
Making an induction, we can even assume that $f$ is a projection 
along a single coordinate, because $(d,-d) + (1,-1) = (d+1,-(d+1))$.\par
We then distinguish the two cases corresponding to a projection along 
a variable of $K$ or of $\Gamma$.
\paragraph{Case 1}
Let us assume that  $f$ is the projection 
\[ f : K^m \times K \times \Gamma^n \to K^m \times \Gamma^n. \]
Let us set $S := \pi (T)$ where 
\[ \pi : K^m \times K  \to K^m \]
is the projection along the first $m$-coordinates. \par 
Then after a partition of $S$, we can assume that 
$S = T(0)$ or $S=T(1)$ with respect to the projection $\pi$ (see \ref{defS(i)sub}).
Remind that $T(i)$ is the set of points $s$ of $S$ such that 
$\dim(\pi^{-1}(s) \cap T)=i$. We distinguish these two cases. 
\paragraph{Case 1.1}
Let us assume that $S=T(0)$. Let then $D \subset f(C)$ be a cell, say an 
$U-(j_1,\ldots,j_n)$-cell. Then by definition $U \subset S$ hence 
$\dim(U) \leq \dim(T)$, and for 
$u \in U$, $\pi^{-1}(u)\cap T$ is finite. 
This implies that 
$D_u$ is the projection along $f$ of a finite number of 
$(i_1,\ldots,i_n)$-cells. Hence 
$j_1+ \ldots +j_n= \dim(D_u) \leq i_1+ \ldots + i_n$.
Hence in that case $\dim(D) \leq \dim (C)$. 
\paragraph{Case 1.2}
Let us assume that $S=T(1)$. 
Then according to 
proposition \ref{defS(i)sub}, 
\[\dim(S) = \dim(T)-1.\] 
Since $S =\pi(T)$ by definition, it follows that   
$f(C) \subset S \times \Gamma^n$.\par  
Let then $D \subset \pi(C)$ be cell. Say $D$ is a 
$U-(j_1,\ldots,j_n)$-cell. 
If we prove that $\dim(D) \leq \max (\dim(C), \dim(C)+(-1,1) )$, this will conclude the proof. 
Necessarily, $U \subset S$, hence 
\[\dim(U) \leq \dim(S) \leq d_1-1.\] 
Now, if $u\in U$, then 
\[ D_u := \{ \gamma \in \Gamma^n \st (u,\gamma) \in D \} \] 
is a 
$(j_1,\ldots,j_n)$-cell and 
$D_u \subset p(C_u)$ where 
\[p : K \times \Gamma^n \to \Gamma^n \]
and
\[C_u :=  \{ (x,\gamma) \in K \times \Gamma^n \st (u,x,\gamma) \in C \}.\]
Now $C_u$ can be decomposed in some cells, whose dimension are 
$(0,d)$ or $(1,d)$ with 
$d \leq d_2$. \par 
Now the projection of a $(0,d)$-cell along $p$ is clearly a 
$d$-cell of $\Gamma^n$. \par 
According to lemma \ref{lemmedimproj}, the projection of a 
cell of dimension 
$(1,d)$ along $p$ is a cell of $\Gamma^n$ of dimension $\leq d+1$. 
Hence, $\dim(D_u) \leq d+1 \leq d_2 +1$, which means that 
$D$ is a cell of dimension 
\[ \dim(D)\leq (d_1-1,d_2+1) \leq \max( \dim(X), \dim(X)+ (-1,1) ).\]
This ends the case 1 of a projection along $K$. 
\paragraph{Case 2}
Let us assume that $f$ is a projection 
\[ f: \KmGn \times \Gamma \to \KmGn.\]
It is then clear that if 
$C \subset K^{m} \times \Gamma^{n+1}$, is a 
$T-(i_1, \ldots, i_n, i_{n+1} )$-cell, then 
$f(C)$ is a 
$T-(i_1,\ldots,i_n)$-cell. 
So 
$\dim(f(C)) \leq \dim(C)$.
\end{proof}

We are now able to prove the theorem mentioned in the introduction:
\begin{theo}
\label{theodimmixed}
The mixed dimension for definable sets of $\KmGn$ satisfies the following properties:
\begin{enumerate}
\item 
If $S \subset \Gamma^n$, then 
$\dim(S) = \langle (0,d) \rangle$ where $d$ is the classical dimension of 
$S$ as a subset of $\Gamma^n$ as defined in section \ref{sec1.4}. 
\item If $S \subset K^m$ is definable, then 
$\dim(S) = \langle (d,0) \rangle$ where $\dim$ is the classical dimension of 
definable subsets of $K^m$ (as defined in sections \ref{subsection:dimACVF} and \ref{sec1.2}).
\item 
If $f : S \to T$ is a definable map, where 
$S \subset \KmGn$ and $T\subset K^{m'}\times \Gamma^{n'}$ are definable, then
\[ \dim (f(S) ) \leq \max_{k\geq 0} ( \dim(S) + (-k,k) ). \]
\item 
If $f: S \to T$ is a definable bijection, then $\dim(S)=\dim(T)$. 
\item $\dim(S \times T) = \dim(S)+ \dim(T)$.
\end{enumerate}
\end{theo}
\begin{proof}
The main difficulty is to prove $(3)$, which results from proposition \ref{propimage}.
\end{proof}

\begin{rem}
\label{remsimple}
If $X \subset \KmGn$ is definable, for each 
integer $i$, 
\[X(i) := \{ x\in K^m \st X_x \ \text{is a definable set of} \  \Gamma^n 
\ \text{of dimension} \ i \} \]
is easily seen to be a definable set of $K^m$.
Then the mixed dimension that we have build can also be characterized by:
\[ \dim(X) = \max_{ \underset{i \geq 0}{X_i \neq \emptyset}} \langle ( \dim(X(i)), i) \rangle.\]
\end{rem}

\subsection{$\N$-valued dimension theory}
\label{sec:dimN}
As a corollary of the previous theorem we obtain a 
$\N$-valued dimension theory for definable sets of $\KmGn$ which satisfies all expected properties. 
\begin{defi}
Let $S \subset  \KmGn$ be a definable subset. 
We define 
\[\dim_\N := \max_{(d_1,d_2) \in \dim(X) } d_1+d_2.\]
\end{defi}
Graphically, it is easy to compute $\dim_\N(X)$ form 
$\dim(X)$. The following diagram should be convincing.
\begin{figure}[H]
\label{figure:Ndim}
\begin{center}
\begin{tikzpicture}[scale=0.5]
\draw [very thin, gray] (0,0) grid (7,7);
\draw (0,0) node[below left] {0};
\foreach \x in {1,...,7} \draw (\x,0) node[below]{\x};
\foreach \y in {1,...,7} \draw (0,\y) node[left]{\y};
\foreach \y in {0,...,4} \draw (0,\y) node {$\bullet$};
\foreach \y in {0,...,4} \draw (1,\y) node {$\bullet$};
\foreach \y in {0,...,2} \draw (2,\y) node {$\bullet$};
\foreach \y in {0,...,1} \draw (3,\y) node {$\bullet$};
\foreach \y in {0,...,1} \draw (4,\y) node {$\bullet$};
\draw [dashed] (0,5)--(5,0);
\fill [gray,pattern=dots] (0,0) -- (0,4) -- (1,4)--(1,2)--(2,2)--(2,1)--(4,1)--(4,0)--cycle;
\end{tikzpicture}
\hspace{2cm}
\begin{tikzpicture}[scale=0.5]
\draw [very thin, gray] (0,0) grid (7,7);
\draw (0,0) node[below left] {0};
\foreach \x in {1,...,7} \draw (\x,0) node[below]{\x};
\foreach \y in {1,...,7} \draw (0,\y) node[left]{\y};
\foreach \y in {0,...,4} \draw (0,\y) node {$\bullet$};
\foreach \y in {0,...,4} \draw (1,\y) node {$\bullet$};
\foreach \y in {0,...,1} \draw (2,\y) node {$\bullet$};
\foreach \y in {0,...,1} \draw (3,\y) node {$\bullet$};
\foreach \y in {0,...,1} \draw (4,\y) node {$\bullet$};
\foreach \y in {0,...,1} \draw (5,\y) node {$\bullet$};
\draw [dashed] (0,6)--(6,0);
\fill[gray,pattern=dots] (0,0) -- (0,4) -- (1,4)--(1,1)--(5,1)--(5,0)--cycle;
\end{tikzpicture}
\caption[How to calculate $\dim_\N$. Example with $D_2$ and $D_4$.]{We consider the smallest integer $d$ such that the dashed line $x+y=d$ is above the lower set $D=\dim(X)$. On the left for $D_2$ we obtain $d=5$, and on the right, for $D_3$ we obtain $d=6$.}
\end{center}
\end{figure}
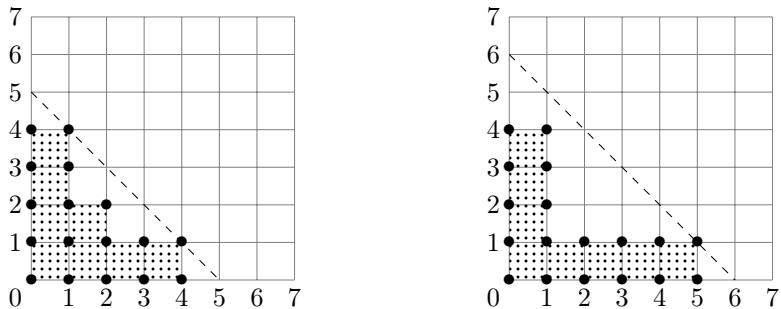

Property (iii) below proceeds from the fact that 
if $(d_1,d_2) \in \N^2$ and $i\in \N$, 
$(d_1-i) +(d_2+i) \leq d_1+d_2$.
\begin{theo}
\label{theodimmN}
\begin{enumerate}
\item 
If $S \subset K^m$ is definable, $\dim_\N(S)$ is the \emph{classical} dimension of 
$S$ (as defined in sections \ref{subsection:dimACVF} and \ref{sec1.2}).
\item 
If $S \subset \Gamma^n$ is definable, $\dim_\N(S)$ is the \emph{classical} dimension of 
$S$ (as defined in section \ref{sec1.4}).
\item 
If $f : S \to T$ is a definable map where $S$ (resp. $T$) is 
a definable set of 
$\KmGn$ (resp. $K^{m'} \times \Gamma^{n'}$), then
$\dim_\N (f(S) ) \leq \dim_\N(S)$.
\item 
If $f: S \to T$ is a definable bijection, then $\dim_\N(S)=\dim_\N(T)$. 
\item $\dim_\N(S \times T) = \dim_\N(S)+ \dim_\N(T)$.
\end{enumerate}
\end{theo}

\section{Dimension with three sorts: generic points}
\label{section3sortes}
In this section, $K$ will be either an algebraically closed valued field, or 
a non-Archimedean algebraically closed field. In the second case, 
we will assume that 
$(\R^*_+ : \Gamma) = +\infty$. 
This technical hypothesis can however  be dropped. \par 
According to the setting (ACVF or subanalytic), the notion of 
a definable set 
$S \subset \KmGn\times k^p$ has to be understound 
accordingly to sections \ref{subsubsec:ACVF} and \ref{subsubsec:an}.

\subsection{Subsets of $\KmGn$, link with the previous section}
\label{sec:2sortsgen}
\begin{prop}
Let $S \subset \KmGn$ be a definable set, and $(a,b) \in \N^2$. 
The following statements are equivalent:
\begin{enumerate}
\item $(a,b) \in \dim(S)$.
\item There exists an algebraically closed (non-Archimedean in the analytic case) extension $K \to L$ and 
$(x, \gamma) \in S(L)$ such that 
\[(a,b) = (d(K)(x)/K) , \dim_{\Q}( |K(x)^*|\langle \gamma \rangle / |K(x)^*| ).\]
\end{enumerate}
\end{prop}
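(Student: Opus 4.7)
The plan is to use the mixed cell decomposition \ref{theomixedcelldec} to reduce to the case where $S$ is a single cell, and then to translate the combinatorial parameters of the cell directly into the two invariants $d(K(x)/K)$ and $\dim_\Q(|K(x)^*|\langle\gamma\rangle/|K(x)^*|)$ at a sufficiently generic point $(x,\gamma)$. The argument is essentially an extension of the characterization of the pure $K$-dimension via $d(K(x)/K)$ (Theorems \ref{theodimK}(4) and \ref{theodimsub}(5)) combined with the o-minimal picture of $\Gamma$-cells as graphs and open ``tubes''.

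For $(1) \Rightarrow (2)$ I would first use the definition of $\dim(S)$ to pick an $X$-$(i_1,\ldots,i_n)$-cell $C \subset S$ with $\dim(X)=a$ and $i_1+\ldots+i_n=b$. Next I would select an algebraically closed extension $K \hookrightarrow L_0$ and a point $x \in X(L_0)$ with $d(K(x)/K) = a$, using the respective characterization of the $K$-dimension. Then I would construct $\gamma = (\gamma_1,\ldots,\gamma_n)$ inductively along the cell structure, shrinking $X$ if necessary so that Proposition \ref{propfonccont} puts each bounding function in the explicit form $\sqrt[e]{|F(x)|\gamma^u}$. At a ``0''-step ($i_{k+1}=0$) I would simply set $\gamma_{k+1} := f_k(x,\gamma_{\leq k})$; the equation $\gamma_{k+1}^e = |F_k(x)|\cdot \gamma_{\leq k}^{u_k}$ shows that $\gamma_{k+1}$ adds nothing to the $\Q$-rank of the value-group extension. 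At a ``1''-step ($i_{k+1}=1$) I would enlarge $L_k$ to an algebraically closed $L_{k+1}$ containing a $\gamma_{k+1}$ strictly inside the open interval $(f_k(x,\gamma_{\leq k}), g_k(x,\gamma_{\leq k}))$ whose class in $\R_+^*$ is $\Q$-linearly independent modulo $|K(x)^*|\langle\gamma_{\leq k}\rangle$; concretely one can take $L_{k+1}$ to be the completion of $L_k(T)$ under a Gauss-type norm sending $T$ to a suitably chosen fresh real. Iterating produces $(x,\gamma) \in C(L_n) \subset S(L_n)$ realizing the prescribed pair $(a,b)$.

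For the converse $(2) \Rightarrow (1)$ I would apply the same cell analysis in the opposite direction. Starting from $(x,\gamma) \in S(L)$ as in (2), I would decompose $S$ into cells and let $C$ be the cell for which $(x,\gamma) \in C(L)$, say an $X$-$(i_1,\ldots,i_n)$-cell with $\dim(X)=d_1$ and $\sum_k i_k=d_2$. From $x \in X(L)$ the characterization of the $K$-dimension gives $a = d(K(x)/K) \leq d_1$. The same refinement via Proposition \ref{propfonccont} shows that every ``0''-coordinate $\gamma_k$ is an $e$-th root of an element of $|K(x)^*|\langle\gamma_{<k}\rangle$, hence contributes nothing to the $\Q$-rank, so only the $d_2$ ``1''-coordinates can enlarge the value group and $b \leq d_2$. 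Since $C \subset S$ we have $(d_1,d_2) \in \dim(S)$, and because $\dim(S)$ is a finite lower set of $\N^2$ the inequality $(a,b) \leq (d_1,d_2)$ yields $(a,b) \in \dim(S)$.

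The main obstacle will be the inductive enlargement in $(1) \Rightarrow (2)$: at each ``1''-step one has to produce a value-group element lying strictly inside the prescribed open interval \emph{and} realizing a genuinely new $\Q$-independent direction. This is precisely where the technical hypothesis $(\R_+^* : \Gamma) = +\infty$ in the analytic case is needed, since the new value-group element must ultimately be realized inside $\R_+^*$.
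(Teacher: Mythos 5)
Your proposal is correct and follows essentially the same route as the paper: reduce to a single $X$-$(i_1,\ldots,i_n)$-cell via the mixed cell decomposition, realize $a=d(K(x)/K)$ using the field-sort characterization of $\dim(X)$, and then observe that the fibre $S_x$ is an $(i_1,\ldots,i_n)$-cell defined over $|K(x)^*|$ so that the $\Q$-rank contributed by $\gamma$ is exactly controlled by the number of $1$'s. The only difference is one of explicitness: where the paper simply asserts the existence of $\gamma\in S_x(\R_+^*)$ of the right rank and then enlarges $L$, you spell out the coordinate-by-coordinate construction via Proposition \ref{propfonccont}, which is a harmless elaboration of the same idea.
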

 \begin{proof}
 $(1) \Rightarrow (2)$ Using a cell decomposition of $S$, 
we can assume that $S$ is a cell of dimension $(a,b)$.
Say $S$ is a $X-(i_1, \ldots,i_n)$-cell. 
Then there exists an algebraically closed non-Archimedean extension $K \to L$ 
and $x \in X(L)$ such that $d(K(x) /K)=a$. 
Consequently, $S_x \subset |L^*|^n$ is an 
$(i_1,\ldots,i_n)$-cell defined over 
$|K(x)^*|$. 
Therefore there exists $\gamma \in S_x(\R_+^*)$ such that 
\[\dim_{\Q} \Big( ( |K(x)^*\langle \gamma \rangle| :  |K(x)^*| ) \OTQ\Big)=b.\]
We can then increase $L$ if necessary such that 
$\gamma \in |L^*|$ and then 
$(x,\gamma) \in S(L)$ and 
\[ (a,b) = (d(K)(x)/K) , \dim_{\Q}( |K(x)^*|\langle \gamma \rangle |: |K(x)^*| ).\]
$(2) \Rightarrow (1)$
We can again assume that $S$ is some $X-(i_1,\ldots,i_n)$-cell.
Then 
\[a = d(K(x) /K) \leq \dim(X) \]
and since $S_x$ is an $(i_1,\ldots,i_n)$-cell defined over $|K(x)^*|$, 
\[ b = \dim_{\Q} \Big( (|K(x)^*|\langle \gamma \rangle) \OTQ \Q \Big) \leq i_1+\ldots i_n.\]
Hence 
$(a,b) \leq (\dim(X), i_1 + \ldots +i_n)$ which is equivalent to say that 
$(a,b) \in \dim(S)$.
 \end{proof}

 \subsection{subsets of $\KmGn \times k^p$}
\begin{defi}
\label{defi:d3sortes}
Let $K \to L$ be an extension and 
$(x,\gamma,\lambda) \in L^m \times \Gamma_L^n \times k_L^p$. We set 
\[d( (x,\gamma,\lambda)/K) :=
\Big( \  d(K(x)/K) , \dim_{\Q} \big( ( |K(x)^*|\langle \gamma \rangle| / |K(x)^*|) \OTQ \big) , \td( \widetilde{ K(x)}( \lambda) / \widetilde{K(x)} ) \ \Big).\]
\end{defi}

\begin{defi}
Let $X\subset K^m\times \Gamma^n \times k^p$ be some definable set. We define 
\[ \dim (X) := \{ (d(x,\gamma,\lambda) \st (x,\gamma,\lambda) \in X(L)  \ \text{for}  \ K\to L \ \text{some extension} \}.
\]
\end{defi}
Beware that this is not a correct definition, because the class of all extensions $K \to L$ is not a set. 
To obtain a correct definition, one can for instance impose a bound on the cardinal of $L$.
In the ACVF case, one way to obtain a correct definition is to consider one $\Card(L)^+$-saturated model $L$ and only consider 
the points $x\in X(L)$ (see \cite[section 4.3]{Mark} for the notion of saturated model). \par 
Hence, by definition, $\dim(X)$ is a finite subset of  
$\llbracket 0 \ldots m \rrbracket \times \llbracket 0 \ldots m \rrbracket \times \llbracket 0 \ldots p \rrbracket$. 
Moreover, one can check by induction that $\dim(X)$ is stable under $\leq$ where $\leq$ is the partial ordering defined component wise on $\N^3$.

\begin{lemme}
Let $X_1 \subset K^m\times \Gamma^n \times k^p$,  
$X_2 \subset  K^{m'}\times \Gamma^{n'} \times k^{p'}$ be some 
definable sets.
Let $f : X_1 \to X_2$ be a definable bijection.
Let $K \to L$ be some algebraically closed valued extension and $(x_1,\gamma_1,\lambda_1) \in X_1(L)$ and 
$f(z)= (x_2,\gamma_2,\lambda_2) \in X_2(L)$. 
Then $d((x_1,\gamma_1,\lambda_1)/K) = d((x_2,\gamma_2,\lambda_2)/K)$.
\end{lemme}

\begin{proof}
Let $M$ be the completion of the algebraic closure of 
$K(x_1)$ inside $L$, so that we have field inclusions
$K \to M \to L$. 
Let then 
\[Y:= \{ (x,\gamma,\lambda) \in X(L) \st x=x_1 \}.\]
This is naturally isomorphic to a definable set of $\Gamma_L \times k_L$ (with parameters in $L$).
Hence the composition 
\[ Y(L) \xrightarrow[]{f_{|Y}} X_2(L) \to L^{m'} \]
has finite image according to lemma \ref{finite_image}, where the second arrow is the projection 
$L^{m'} \times \Gamma^{n'} \times k^{p'} \to L^{m'}$. \par 
Then $f(Y(L)) = f(Y(M))$ (cf Remark \ref{rem:ens_fini}) so 
$x_2 \in M^m$ and by a symmetric argument it follows that the completion of the algebraic closure of 
$K(x_1)$ and $K(x_2)$ are the same. \par 
This allows to reduce to the case where $X_1 \subset \Gamma^n \times k^p$ and 
$X_2 \subset \Gamma^{n'} \times k^{p'}$. 
Let us then consider 
$(\gamma_1,\lambda_1) \in X_1(L)$, and $(\gamma_2,\lambda_2) \in X_2(L)$ its image by $f$.
We can find some subfield 
$K \subset M \subset L$ such that $M$ is complete and algebraically closed and  
\begin{flalign*}
& \tilde{M} = k(\lambda_1)^\alg & \\
& | M^*| = |K^*|\langle \gamma_1 \rangle \OTQ
\end{flalign*}
Then $(\gamma_2,\lambda_2) \in X_2(M)$ and 
\[ d(( \gamma_2,\lambda_2) / K) \leq ( \dim_{\Q} (|M^*| / |L^*| ), \td (\tilde{M}/ \tilde{K}) ) = d(\gamma_1,\lambda_1) .\]
By symmetry, $d((\gamma_1,\lambda_1)/K) = d((\gamma_2,\lambda2)/K)$.
\end{proof}
This has the following corollary: 

\begin{cor}
Let $f : X_1 \to X_2$ be a definable bijection. 
Then $\dim(X_1) = \dim(X_2)$.
\end{cor}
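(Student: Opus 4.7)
The plan is to deduce the corollary directly from the preceding lemma, which already carries essentially all the content: once we know that definable bijections preserve the $3$-tuple $d((x,\gamma,\lambda)/K) \in \N^3$ on points, we need only translate this pointwise statement into an equality of the sets $\dim(X_1)$ and $\dim(X_2)$.

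First I would unfold the definitions. By the definition introduced just above, $\dim(X_i)$ is the collection of all triples $d((x,\gamma,\lambda)/K)$ where $(x,\gamma,\lambda)$ ranges over $X_i(L)$ for $K \to L$ an algebraically closed (non-Archimedean in the analytic case) extension. So it suffices to show that each such triple arising from $X_1$ also arises from $X_2$, and vice versa. Here I would invoke uniform quantifier elimination (theorems \ref{theoquantel} and its three-sorted counterparts): since $f$ is definable, its graph is defined by a formula $\varphi$ which, interpreted over any extension $L$, defines a subset $\Graph(f)(L) \subset X_1(L) \times X_2(L)$ that is still the graph of a bijection $f_L \colon X_1(L) \to X_2(L)$. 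The same holds for $f^{-1}$, and the two constructions are mutually inverse.

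Now given any extension $K \to L$ and any $z_1 = (x_1,\gamma_1,\lambda_1) \in X_1(L)$, set $z_2 := f_L(z_1) \in X_2(L)$. The preceding lemma gives
\[
d(z_1/K) = d(z_2/K),
\]
so $d(z_1/K) \in \dim(X_2)$. Letting $z_1$ vary shows $\dim(X_1) \subseteq \dim(X_2)$. Applying the same argument to $f^{-1}$ gives the reverse inclusion, and hence $\dim(X_1) = \dim(X_2)$.

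There is no real obstacle; the only point one should state carefully is the set-theoretic subtlety already flagged in the definition, namely that $\dim(X_i)$ should be computed over a sufficiently saturated (or sufficiently large) extension $L$ so that every possible triple in $\N^3$ is realized by some point. Once one fixes such an $L$ (common to both $X_1$ and $X_2$, which is harmless since $X_1, X_2$ both live over $K$), the argument above goes through verbatim and the corollary follows.
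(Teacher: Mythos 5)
Your proposal is correct and matches the paper's intended argument: the paper states this as an immediate consequence of the preceding lemma, and your unfolding of the definition of $\dim$, the use of $f_L$ being a bijection over every extension $L$, and the symmetry via $f^{-1}$ is exactly the reasoning the paper leaves implicit. The remark about working over a sufficiently saturated extension is a sensible precaution the paper itself raises when defining $\dim$.
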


In order to understand what happens to the dimension under a definable map $f$, 
we can restrict to the case where $f$ is a projection since we have just proved that the dimension 
is preserved by definable bijections.
Hence let us consider $K \to L$ an algebraically closed extension and 
\begin{flalign*}
& \pi_1: L^{m+1} \times \Gamma^n \times k^p \to L^m \times \gamma^n \times k^p &\\
& \pi_2: L^{m} \times \Gamma^{n+1} \times k^p \to L^m \times \gamma^n \times k^p &\\
& \pi_1: L^{m} \times \Gamma^n \times k^{p+1} \to L^m \times \gamma^n \times k^p &
\end{flalign*}

\begin{theo}
Let $X_1 \subset K^m\times \Gamma^n \times k^p$, 
$X_2 \subset K^{m'}\times \Gamma^{n'} \times k^{p'}$ be definable sets and 
$f : X_1 \to X_2$ be a definable map. 
Then 
\[ \dim (f(X_1) ) \subset \big ( \bigcup_{(a,b) \in \N} \dim(X) + (-a-b,a,b) \big) \cap \N^3.\] 
\end{theo}
\begin{proof}
Since the dimension is stable under definable bijection, using the Graph of $f$, we can assume that 
$f$ is a coordinate projection, and with an induction, 
we can even assume that $f$ is a coordinate projection along one single variable. 
Hence let us consider $K \to L$ an extension and 
\begin{flalign*}
& \pi_1: L^{m+1} \times \Gamma^n \times k^p \to L^m \times \gamma^n \times k^p &\\
& \pi_2: L^{m} \times \Gamma^{n+1} \times k^p \to L^m \times \gamma^n \times k^p &\\
& \pi_3: L^{m} \times \Gamma^n \times k^{p+1} \to L^m \times \gamma^n \times k^p &
\end{flalign*}
Then, it follows form  definition \ref{defi:d3sortes} that  
if 
$z\in L^{m+1} \times \Gamma^n \times k^p$ 
\[ d(\pi_1(z) /K) = 
\begin{cases}
d(z/K) \ or \\
d(z/K) +(-1,0,0) \ or \\
d(z/K) +(-1,1,0) \ or \\
d(z/K) +(-1,0,1).
\end{cases}
\]
If 
$z\in L^{m} \times \Gamma^{n+1} \times k^p$ 
\[ d(\pi_2(z)/K) = 
\begin{cases}
d(z/K) \ or \\
d(z/K) +(0,-1,0).
\end{cases}
\]
If 
$z\in L^{m} \times \Gamma^n \times k^{p+1}$ 
\[ d(\pi_3(z)/K) = 
\begin{cases}
d(z/K) \ or \\
d(z/K) +(0,0,-1).
\end{cases}
\]
\end{proof}
If one wants to obtain a $\N$-valued dimension one has to consider the following definition. 
\begin{defi}
Let $X \subset  L^{m} \times \Gamma^n \times k^{p}$ be a definable set.
We set 
\[ \dim_\N (X) = \max_{(a,b,c) \in \dim(X) } a+b+c.\] 
\end{defi}

\begin{theo}
If $X \subset K^m$ (resp. $\Gamma^n$, resp. $k^p$) is a definable set, then 
$\dim_\N(X)$ corresponds to the classical dimension as defined in section \ref{sec1.2} 
(resp. as a definable set of a totally ordered divisible group, resp. as a constructible subset of the affine space over $k$). \par 
In addition, if  $X \subset  L^{m} \times \Gamma^n \times k^{p}$, 
$Y\subset  L^{m'} \times \Gamma^{n'} \times k^{p'}$ are definable sets and 
$f : X \to Y$ is a definable map, 
\[\dim_\N(f(X)) \leq \dim_\N (X).\]
\end{theo}

\begin{rem}
It is possible to state the above results in terms of cell. 
Let $(a,b,c) \in \N^3$, and $C_1 \subset K^m \times \Gamma^n$ be a $X-(i_1,\ldots,i_n)$-cell of dimension 
$(a,b)$ (for $X\subset K^m$ some definable set of dimension $a$).
Let $C_2 \subset X \times k^p$ be a definable set such for all $x\in X$, 
$(C_2)_x \subset k^p$ is a locally closed subset of dimension $c$. 
Then we abusively denote by 
\[C = C_1\cap C_2 := \{ (x,\gamma,\lambda) \in  K^{m} \times \Gamma^n \times k^{p} \st 
(x,\gamma) \in C_1 \ \and \ (x,\lambda) \in C_2\}.\]
Then we say that $C$ is a $X$-cell of dimension $(a,b,c)$.
In particular, if $x\in C$, then 
$C_x \subset \Gamma^n \times k^p$ is of the form 
$A\times B$ where $A \subset \Gamma^n$ is a cell of dimension $a$ and 
$B \subset k^p$ is a locally closed subset of dimension $b$. \par 
In this context, if $X \subset   K^{m} \times \Gamma^n \times k^{p}$ is a definable set, 
 $(a,b,c) \in \dim (X)$ if and 
only if $X$ contains a cell of dimension $(a,b,c)$.
\end{rem}

\section{Tropicalization with Berkovich spaces}
\label{sec3}

As we have explained in the introduction, this work started as an attempt to 
generalize the result \cite[th 3.2]{Ducpo}. This is achieved with
\begin{theo}
\label{theoimprovduc}
Let $k$ be a non-Archimedean field. 
Let $X$ be a compact $k$-analytic space, and let 
$f_1, \ldots, f_n$ be $n$ analytic functions defined on $X$. 
This defines a function 
\[\begin{array}{llll}
|f| : & X & \to & (\R_+)^n \\
       & x & \mapsto & (|f_1(x)|, \ldots , |f_n(x)| )
\end{array} \]
Then 
$|f|(X)\cap (\R_+^*)^n$ is a $\Gamma$-rational polyhedral set of $(\R_+^*)^n$ of 
dimension $\leq \dim(X)$, where $\Gamma = \valeur$.
\end{theo}

\begin{proof} 
It is enough to prove the result when $X$ is affinoid, so we will assume that $X$ is an affinoid space. \par 
First let $K$ be an extension of $k$ such that $K$ is algebraically closed and complete, and 
$|K|=\R_+$. The ground field extension functor leads to the commutative diagram:
\[\xymatrix{
X_K \ar[d]^{\pi} \ar[rd]^{|f_K|} &  \\
X \ar[r]^{|f|} & (\R_+)^n
}\]
where $\pi$ is surjective. 
Hence, $|f|(X) = |f_K|(X_K)$. \par 
Secondly let us show that (under these assumptions on $K$), $|f|(X) = |f|(X(K))$.
So let $(x_1,\ldots, x_n) \in |f|(X) \cap (\R_+^*)^n$. 
Then, $U$ is a nonempty affinoid domain of $X$. 
Moreover, since $|K|=\R_+$, $U$ is a strict affinoid domain, so it contains a rigid point, $z$ say 
(thanks to the Nullstellensatz \cite[7.1.2]{BGR}). 
Hence $|f|(z) = (x_1,\ldots, x_n)$, and this shows that $|f|(X) = |f|(X(K))$. \par 
Eventually we are reduced to show that if $X$ is a $k$-affinoid space, 
$k \to K$ is an algebraically closed non-Archimedean extension 
with $|K| = \R_+$, then 
$|f|(X(K)) \cap (\R_+^*)^n$ is a $\Gamma$-polyhedral set of $(\R_+^*)^n$ of 
dimension less or equal than $\dim (X)$. \par 
That $|f|(X(K))$ is a definable set is now just an application of the analytic elimination theorem \cite[3.8.2]{Li_rig}.
Indeed, by definition of an affinoid space, $X$ can be identified with the Zariski closed subset 
of a closed polydisc, that is to say, we may find an integer $N$, and 
$g_1, \ldots ,g_m \in k \langle T_1,\ldots , T_N\rangle$ such that 
$X(K)$ is identified with the Zariski closed subset 
$\{ z=(z_1, \ldots , z_N) \in (K^\circ)^N \ \big| \ g_i(z)=0, \ i=1 \ldots m  \}$. 
Moreover, the functions $f_1, \ldots ,f_n$ of $X$ arise as functions of 
$k \langle T_1,\ldots , T_N\rangle$, so that we will see the $f_i's$ as elements of 
$k \langle T_1,\ldots , T_N\rangle$. 
In addition, the $f_i$'s induce a map 
$f: X \to \mathbb{A}_k^{n,an}$. We can easily reduce to the case 
$f: X \to \mathbb{B}^n$. 
Now, 
\begin{gather*}
f(X(K)) = \\
\{ (x_1, \ldots , x_n ) \in (K^\circ)^n \ \big| \ \exists (z_1, \ldots , z_N) \in (K^\circ)^N 
\ \text{such  that} \ 
g_i(z)=0, \ i=1\ldots m, f_j(z)=x_j, \ j=1 \ldots n \}
\end{gather*}
and this is a subanalytic set of $(K^\circ)^n$ of dimension less or equal than $d$. 
According to theorem \ref{theodimmN}, 
$\Trop(f(X(K)) \cap (\R_+^*)^n$ is then a definable set of $(\R^*_+)^n$ of dimension $\leq d$, hence 
$\Trop(f(X(K)) \cap (\R_+^*)^n$ is also a definable set of $(\R_+^*)^n$ of dimension $\leq d$. \par 
Finally, $\Trop(f(X(K)) \cap (\R_+^*)^n$ is a closed subset of $(\R_+^*)^n$. 
Indeed $X$ is compact, $\Trop \circ f$ is continuous and remind that 
\[\Trop(f(X(K)) = \Trop(f(X)).\]
Hence using the next lemma (after passing to subsets of $\R^n$ with a logarithm map), we 
conclude that $\Trop(f(X(K)) \cap (\R_+^*)^n$ is a polyhedral set of $(\R_+^*)^n$ of dimension 
less or equal than $\dim(X)$.
\end{proof}

\begin{lemme}
Let us consider a nonempty set 
\[P = \bigcap_{j=1}^m \{ (\gamma_i) \in \R^n \st 
\lambda_j + \sum _{i=1}^n a_i^j\gamma_i \bowtie_j 0 \}\]
where $\lambda_j \in \R$,  $a_i^j \in \R$ and $\bowtie_j \in \{ <, \leq \}$.
Then 
\[\overline{P} = \bigcap_{j=1}^m \{ (\gamma_i) \in \R^n \st 
\lambda_j + \sum_{i=1}^n a_i^j \gamma_i \leq 0\}.\]
\end{lemme}
Note that if $P$ was empty, it might happen that the set obtained replacing the $<$ by some $\leq$ 
would become nonempty. 
\begin{proof}
Let us denote by 
\[Q = \bigcap_{j=1}^m \{ (\gamma_i) \in \R^n \st 
\lambda_j + \sum_{i=1}^n a_i^j\gamma_i \leq 0\}.\]
In one hand it is clear that $\overline{P} \subset Q$. \par 
A simple calculation shows that if 
$p\in P$ and $q\in Q$ and $u \in ]0,1]$, then $up + (1-u)q \in P$. 
Let then $q\in Q$ and let us pick $p\in P$. 
When $u \to 0$ we obtain that $q\in \overline{P}$, which shows that 
$Q \subset \overline{P}$.
\end{proof}

\bibliographystyle{alpha}
\bibliography{bibli}

\end{document}